\title{Properties of the gradient squared of the discrete Gaussian free field}
\subjclass{60G60, 60K35, 82B20, 82B41}
\keywords{Scaling limit, Gaussian free field, Abelian sandpile model, cumulants, k-point correlation functions, Fock spaces, Besov--H\"older spaces, point processes}
\date{\today}
\author[1]{Alessandra Cipriani}
\author[2]{Rajat S. Hazra}
\author[3]{Alan Rapoport}
\author[3]{Wioletta M. Ruszel}
\affil[1]{UCL, Department of Statistical Sciences, 1-19 Torrington Place, London, WC1E 7HB, UK\\
\url{a.cipriani@ucl.ac.uk}}
\affil[2]{Mathematical Institute, Leiden University, Niels Bohrweg 1, 2333 CA, Leiden, The Netherlands\\
\url{r.s.hazra@math.leidenuniv.nl}}
\affil[3]{Utrecht University, Budapestlaan 6, 3584 CD Utrecht, The Netherlands\\
\url{a.rapoport@uu.nl, w.m.ruszel@uu.nl}}
\begin{document}

\maketitle

\begin{abstract}

    In this paper we study the properties of the centered (norm of the) gradient squared of the discrete Gaussian free field in $U_{\eps}=U/\eps\cap \mathbb{Z}^d$, $U\subset \R^d$ and $d\geq 2$. The covariance structure of the field is a function of the transfer current matrix and this relates the model to a class of systems (e.g. height-one field of the Abelian sandpile model or pattern fields in dimer models) that have a Gaussian limit due to the rapid decay of the transfer current. Indeed, we prove that the properly rescaled field converges to white noise in an appropriate local Besov-H\"older space. Moreover, under a different rescaling, we determine the $k$-point correlation function and joint cumulants on $U_{\eps}$ and in the continuum limit as $\eps\to 0$. This result is related to the analogue limit for the height-one field of the Abelian sandpile (\citet{durre}), with the same conformally covariant property in $d=2$.
\end{abstract}

\section{Introduction}

The Gaussian free field (GFF) is one of the most prominent models for random surfaces. It appears as scaling limit of observables in many interacting particle systems, see for example \citet{Kenyon,Wilson2011XORIsingLA,jerison,Sheffield2007Nov}. It serves as a building block for defining the Liouville measure in Liouville quantum gravity (see \citet{Ding2021Sep} and references therein for a list of works on the topic). 

Its discrete counterpart, the discrete Gaussian free field (DGFF), is also very well-known among random interface models on graphs. Given a graph $\Lambda$, a (random) interface model is defined as a collection of (random) real heights $\Gamma=(\Gamma(x))_{x\in \Lambda}$, measuring the vertical distance between the interface and the set of points of $\Lambda$ (\citet{funaki,Velenik2006Jan}). The discrete Gaussian free field has attracted a lot of attention due to its links to random walks, cover times of graphs, and conformally invariant processes (see \citet{Glimm, Barlow,Sheffield2007Nov,Ding2012,Schramm2009Jan}, among others). In the present paper, we will consider the DGFF on the square lattice, that is, we will focus on $\Lambda\subseteq \Z^d$, in which case the probability measure of the DGFF is a Gibbs measure with formal Hamiltonian given by
\begin{equation}\label{eq:Hamiltonian}
    H(\Gamma) = \frac{1}{2d}\sum_{x,y: \|x-y\|=1} V\left(\Gamma(x)-\Gamma(y)\right) \ ,
\end{equation}
where $V(\varphi)=\varphi^2/2$. We will always work with $0$-boundary conditions, meaning that we will set $\Gamma(x)$ to be zero almost surely outside $\Lambda$. For general potentials $V(\cdot)$ the Hamiltonian~\eqref{eq:Hamiltonian} defines a broad class of gradient interfaces which have been widely studied in terms of decay of correlations and scaling limits~(\citet{NadafSpencer,BiskupSpohn,cotar2009strict}), among others.

The gradient Gaussian free field $\nabla \Gamma$ is defined as the gradient of the DGFF $\Gamma$ along edges of the square lattice. 
This field is a centered Gaussian process whose correlation structure can be written in terms of $T(\cdot,\cdot)$, the transfer current (or transfer impedance) matrix (\citet{kassel-wu}). Namely, if we consider the gradient $\nabla_i \Gamma(\cdot)\coloneqq\Gamma(\cdot+e_i)-\Gamma(\cdot)$ in the $i$-th coordinate direction of $\R^d$, we have, for $x,y\in\Z^d,1\le i,j\le d $, that
\[
\begin{split}
    \mathbb{E}\left[\nabla_i \Gamma(x) \nabla_j \Gamma(y)\right] & = G_{\Lambda}(x,y) - G_{\Lambda}(x+e_i,y) -G_{\Lambda}(x,y+e_j) + G_{\Lambda}(x+e_i,y+e_j)  \\
    &= T(e,f)
\end{split}
\]
where  $e=(x,x+e_i)$ and $f=(y,y+e_j)$ are directed edges of the grid and $G_{\Lambda}(\cdot,\cdot)$ is the discrete harmonic Green's function on $\Lambda$ with $0$-boundary conditions outside $\Lambda$.
Here $T(e,f)$ describes a {current flow} between $e$ and $f$.

The main object we will study in our article is the following. Take $U$ to be a connected, bounded subset of $\R^d$ with smooth boundary. Consider the recentered squared norm of the gradient DGFF, formally denoted by $$\Phi_\eps(\cdot)=\,:\!\|\nabla \Gamma\|^2\!:\!(\cdot) =\sum_{i=1}^d : \left(\Gamma(\cdot+e_i)-\Gamma(\cdot)\right)^2 :$$
on the discretized domain $U_{\eps} = U/\eps \cap \mathbb{Z}^d$, $\eps>0$, $d\geq 2$, with $\Gamma$ a $0$-boundary DGFF on $U_\eps$. The colon $:(\cdot) :$ denotes the Wick centering of the random variables. In the rest of the paper we will simply call $\Phi_\eps$ {\em the gradient squared of the DGFF}. Let us remark that we do not consider $d=1$ here since in one dimension the gradient of the DGFF is a collection of i.i.d. Gaussian variables.

\subsection*{\texorpdfstring{$k$}{k}-point correlation functions}

Our first main result  determines the $k$-point correlation functions for the field $\Phi_{\eps}$ on the discretized domain $U_\eps$ and in the scaling limit as $\eps\to 0$. We defer the precise statement to Theorem~\ref{thm:cumulants} in Section \ref{sec:main}, which we will now expose in a more informal way. Let $\eps>0$ and $k \in \mathbb{N}$ and let the points $x^{(1)},\dots,x^{(k)}$ in $U \subset \R^d$, $d\geq 2$, be given. Define $x^{(j)}_\eps$ to be a discrete approximation of $x^{(j)}$ in $U_\eps$, for $j = 1,\dots,k$. Let $\Pi([k])$ be the set of partitions of $k$ objects and $S_\mathrm{cycl}^0(B) $ be the set of cyclic permutations of a set $B$ without fixed points. Finally let $\mathcal E$ be the set of coordinate vectors of $\R^d$. Then the $k$-point correlation function at fixed ``level'' $\eps$ is equal to
\begin{equation}\label{eq:cumulantsIntro}
    \E\left[\prod_{j=1}^k{\Phi_\eps\big(x^{(j)}_\eps\big)}\right] = \sum_{\pi\in\Pi([k])}\prod_{B\in\pi}2^{|B|-1}\sum_{\sigma\in S_\mathrm{cycl}^0(B)} \sum_{\eta:B\to\mathcal{E}}\prod_{j\in B} \nabla_{\eta(j)}^{(1)}\nabla_{\eta(\sigma(j))}^{(2)}G_{U_\eps}\big(x^{(j)}_\eps,x^{(\sigma(j))}_\eps\big).
\end{equation}
Moreover if $x^{(i)} \neq x^{(j)}$ for all $i \neq j$, the scaling limit of the above expression is
\begin{multline}\label{eq:moments_limitIntro}
    \lim_{\eps\to 0} \eps^{-dk}\E\left[\prod_{j=1}^k{\Phi_\eps\big(x^{(j)}_\eps\big)}\right] = \sum_{\pi\in\Pi([k])}\prod_{B\in\pi}2^{|B|-1}
    \sum_{\sigma\in S_\mathrm{cycl}^0(B)} \\
    \sum_{\eta:B\to\mathcal{E}}\prod_{j\in B} \partial_{\eta(j)}^{(1)}\partial_{\eta(\sigma(j))}^{(2)}G_U \big(x^{(j)}, x^{(\sigma(j))}\big) \,
\end{multline}
where $G_U(\cdot,\cdot)$ is the continuum Dirichlet harmonic Green's function on $U$. As a corollary (Corollary \ref{cor:cumulants}) we also determine the corresponding cumulants on $U_\eps$ and in the scaling limit.

Let us discuss some interesting observations in the sequel. The $k$-point correlation function of~\eqref{eq:moments_limitIntro} has similarities to the $k$-point correlation that arises in permanental processes, see \citet{Last, Eisen, manju} for relevant literature. In fact, in $d=1$ one can show that the gradient squared is {\em exactly} a permanental process with kernel given by the diagonal matrix whose non-zero entries are the double derivatives of $G_U$~\cite[Theorem 1]{mccullagh2006permanental}. In higher dimensions however we cannot identify a permanental process arising from the scaling limit, since the directions of derivations of the DGFF at each point are not independent. Nevertheless the 2-point correlation functions of $\Phi_\eps$ are positive (see Equation \eqref{eq:2point} in Section \ref{sec:proofs}), which is consistent with attractiveness of permanental processes \citep[Remark on p. 139]{Last}, and the overall structure resembles closely that of permanental processes marginals.

In $d=2$, the limiting $k$-joint cumulants of first order $\kappa$ of our field are interestingly connected to the cumulants of the height-one field  $\big(h_{\eps}(x_{\eps}^{(i)}):\ x_\eps^{(i)}\in U_{\eps}\big)$ of the Abelian sandpile model~\citep[Theorem 2]{durre}. Theorem~\ref{thm:cumulants} will imply that for every set of $\ell\ge 2$ pairwise distinct points in $d=2$ one has
\begin{equation}\label{eq:equal_cumulants}
   -2\ \lim_{\eps\to 0} \eps^{-2 \ell}\kappa\left(\frac{C}{4}\Phi_\eps\big(x^{(1)}_\eps \big), \dots, \frac{C}{4}\Phi_\eps\big(x^{(\ell)}_\eps\big)\right) = \lim_{\eps\to 0} \eps^{-2 \ell}\kappa\left(h_\eps\big(x_{\eps}^{(1)} \big), \dots, h_\eps\big(x_{\eps}^{(\ell)}\big)\right)
\end{equation}
with
\begin{equation}\label{eq:def_C}C=\frac{2}{\pi} -\frac{4}{\pi^2} =\pi \ \mathbb{E}\left[h_{0}(0)\right],\end{equation}
see \citet[Theorem 6]{durrethesis}.

We would also like to point out that the apparently intricate structure of Equations~\eqref{eq:cumulantsIntro}--\eqref{eq:moments_limitIntro} and of D\"urre's Theorem 2 can be unfolded as soon as one recognizes therein the structure of a Fock space. We will discuss this point in more detail in Subsection~\ref{subsec:fock}, where in particular in Corollary~\ref{cor:cum_in_height} we will derive a Fock space representation of the $k$-point function for the height-one field. We will pose further questions on this matter in the Discussion Section~\ref{sec:dis}.

Due to the similar nature of the cumulants in the height-one field of the sandpile and our field, we show in Proposition \ref{thm:conformal} that in $d=2$ the $k$-point correlation functions are conformally covariant (compare~\citet[Theorem 1]{durre},~\citet[Theorem 2]{kassel-wu}). This hints at Theorems 2 and 3 of \citet{kassel-wu}, in which the authors prove that for finite weighted graphs the rescaled correlations of the spanning tree model and minimal subconfigurations of the Abelian sandpile have a universal and conformally covariant limit.

\subsection*{Scaling limit}

The second main result of our paper is the scaling limit of the field towards white noise in some appropriate local Besov-H\"older space. As we will show in Theorem~\ref{thm:goes_to_WN}, Section \ref{sec:main}, as $\eps\to0$ the gradient squared of the discrete Gaussian free field $\Phi_{\eps}$ converges as a random distribution to spatial white noise $W$:
\begin{equation}\label{eq:wn_intro}
         \frac{\eps^{-d/2}}{\sqrt{\chi}} \Phi_{\eps} \overset{d}\longrightarrow W \ ,
\end{equation}
for some explicit constant $0<\chi<\infty$. The result is sharp in the sense that we obtain convergence in the smallest H\"older space where white noise lives. The constant $\chi$, defined explicitly in~\eqref{eq:chi_def}, is the analogue of the susceptibility for the Ising model, in that it is a sum of all the covariances between the origin and any other lattice point.
We will prove that this constant is finite and the field $\Phi_\eps$ has a Gaussian limit. 
Note that \citet{Newman1980Jan} proves the same result
for translation-invariant fields with finite susceptibility satisfying the FKG inequality. In our case we do not have translation invariance since we work on a domain, so we are not able to apply directly this criterion. From a broader perspective there are several other results in the literature that obtain white noise in the limit due to an algebraic decay of the correlations, see for example~\citet{bauerschmidt2014scaling}.

Note that our field can be understood in a wider class of models having correlations which depend on the transfer current matrix $T(\cdot,\cdot)$. An interesting point mentioned in \citet{kassel-wu} is that pattern fields of determinantal processes closely connected to the spanning tree measure and $T(\cdot,\cdot)$ (for example the spanning unicycle, the Abelian sandpile model (\citet{durre}) and the dimer model (\citet{Bouti})) 
have a universal Gaussian limit when viewed as random distributions. Correlations of those pattern fields can be expressed in terms of transfer current matrices which decay sufficiently fast and assure the central limit-type behaviour which we also obtain.

Let us comment finally on the differences between expressions \eqref{eq:moments_limitIntro} and~\eqref{eq:wn_intro}. The scaling factors are different, and this reflects two viewpoints one can have on $\Phi_\eps$: the one of \eqref{eq:moments_limitIntro} is that of correlation functionals in a Fock space, while in \eqref{eq:wn_intro} we are looking at it as a Gaussian distributional field (compare also Theorems 2 and 3 in~\citet{durre}). This is compatible, as there are examples of trivial correlation functionals which are non-zero as random distributions~(\citet{kang2013gaussian}).

The novelty of the paper lies in the fact that we construct the gradient squared of the Gaussian free field on a grid, determine its $k$-point correlation function and scaling limits. We determine tightness in optimal Besov-H\"older spaces (optimal in the sense that we cannot achieve a better regularity for the scaling limit to hold). Furthermore we show the ``dual'' behavior in the scaling limit of the gradient squared of the DGFF as a Fock space field and as a random distribution. As mentioned before we recognize a similarity to permanental processes, and it is worthwhile noticing that for general point processes there is a Fock space structure, see e.g. \citet[Section 18]{Last}. Since there is a close connection to the height-one field via correlation structures, we also unveil a Fock space structure in the Abelian sandpile model.

\subsection*{Proof ideas}

The main idea for the proof of results \eqref{eq:cumulantsIntro}--\eqref{eq:moments_limitIntro} is to decompose the $k$-point correlation function in terms of complete Feynman diagrams~(\citet{janson}). Then we can use the Gaussian nature of the field to expand the products of covariances as transfer currents. To determine the scaling limit we will use developments from \citet{funaki} and \citet{kassel-wu}. Let us stress that the proof of the scaling limit of cumulants differs from the one of \citet[Theorem 2]{durre} who instead uses the correspondence between the height-one field and the spanning tree explicitly to determine the limiting observables.

The proof of the scaling limit~\eqref{eq:wn_intro} is divided into two parts. In a first step (Proposition~\ref{thm:tightness}) we prove that the family of fields under consideration is tight in an appropriate local Besov-H\"older space by using a tightness criterion of \citet{mf}. The proof requires a precise control of the summability of $k$-point functions, which is provided by Theorem~\ref{thm:cumulants} and explicit estimates for double derivatives of the Green's function in a domain. Observe that, even if the proof relies on the knowledge of the joint moments of the family of fields, we only use asymptotic bounds derived from them. More specifically, we need to control the rate of growth of sums of moments at different points. The second step (Proposition~\ref{prop:convWN}) consists in determining the finite-dimensional distributions and identifying the limiting field. We will first show that the limiting distribution, when tested against test functions, has vanishing cumulants of order higher or equal to three, and secondly that the limiting covariance structure is the $L^2(U)$ inner product of the test functions. This will imply that the finite-dimensional distributions of converge to those corresponding to $d$-dimensional white noise. For this we rely on generalized bounds on double gradients of the Green's function from \citet{lawlerlimic} and \citet{durre}.

\paragraph{Structure of the paper} The structure of the paper is as follows. In Section \ref{sec:not} we fix notation, introduce the fields that we study and provide the definition of the local Besov-H\"older spaces where convergence takes place. Section \ref{sec:main} is devoted to stating the main results in a more precise manner. The subsequent Section \ref{sec:proofs} contains all proofs and finally in Section \ref{sec:dis} we discuss possible generalizations and pose open questions.
\section{Notation and preliminaries}\label{sec:not}

\paragraph{Notation} Let $f,g$ be two functions 
$f,g:\, \R^d\to\R^d$, $d\ge 2$.  We will use $f(x)\lesssim g(x)$ to indicate that there exists a constant $C>0$ such that $|f(x)|\le C |g(x)|$, where $|\cdot|$ denotes the Euclidean norm in $\R^d$. If we want to emphasize the dependence of $C$ on some parameter (for example $U$, $\eps$) we will write $\lesssim_U$, $\lesssim_\eps$ and so on. We use the Landau symbol $f=\mathcal O(g)$ if there exist $x_0 \in \R^d$ and $C>0$ such that $|f(x)|\le C|g(x)|$ for all $x\ge x_0$. Similarly $f=o(g) $ means that $\lim_{x\to 0}f(x)/g(x)=0$. 
Furthermore, call $[\ell] \coloneqq \{1,2,\ldots,\ell\}$ and $\llbracket-\ell,\ell\rrbracket \coloneqq \{-\ell,\dots,-1,0,1,\dots,\ell\}$, for some $\ell\in \N$. 

We will write $|A|$ for the cardinality of a set $A$. 
For any finite set $A$ we define $\Pi(A)$ as the set of all partitions of $A$. Let $\mathrm{Perm}(A)$ denote the set of all possible permutations of the set $A$ (that is, bijections of $A$ onto itself). When $A = [k]$ for some $k\in\N$, we might also refer to its set of permutations as $S_k$. If we restrict $S_k$ to those permutations without fixed points, we denote them as $S^0_k$.
Call $S_\mathrm{cycl}(A)$ the set of the full cyclic permutations of $A$, possibly with fixed points. More explicitly, any $\sigma:A\to A$ bijective is in $S_\mathrm{cycl}(A)$ if $\sigma(A') \neq A'$ for any subset $A'\subsetneq A$ with $\abs{A'} > 1$. When this condition is relaxed to all $A'$ with $\abs{A'}>0$ we obtain the set of all cyclic permutations without fixed points which is called $S^0_\mathrm{cycl}(A)$. 

Let $n\in\N$ and $\mathbf X=(X_{i})_{i=1}^n$ be a vector of real-valued random variables, each of which has all finite moments.

\begin{definition}[Joint cumulants of random vector]\label{def:cum_mult}
The cumulant generating function $K(\mathbf t)$ of $\mathbf X$ for $\mathbf t=(t_1,\dots,t_n)\in \R^n$ is defined as 
\[
K(\mathbf t) \coloneqq \log \left ( \mathbb{E}\big[e^{ \mathbf t\cdot \mathbf X}\big]\right ) = \sum_{\mathbf m\in\N^n} \kappa_{\mathbf m}(\mathbf X) \prod_{j=1}^n \frac{t_j^{m_j}}{m_j!} \ ,
\]
where $\mathbf t\cdot \mathbf X$ denotes the scalar product in $\R^n$, $\mathbf m=(m_1,\dots,m_n)\in\N^n$ is a multi-index with $n$ components, and
\[
\kappa_{\mathbf m}(\mathbf X)=\frac{\partial^{|m|}}{\partial t_1^{m_1}\cdots \partial t_n^{m_n}}K(\mathbf t)\Big|_{t_1=\ldots=t_n=0} \ ,
\]
being $|m|=m_1+\cdots+m_n$. The joint cumulant of the components of $\mathbf X$ can be defined as a Taylor coefficient of $K(t_1,\ldots,t_n)$ for $\mathbf m=(1,\,\,\ldots,\,1)$; in other words
\[
\kappa(X_1,\ldots,X_n)=\frac{\partial^n}{\partial t_1\cdots \partial t_n} K(\mathbf t)\Big|_{t_1=\ldots=t_n=0} \ .
\]
In particular, for any $A\subseteq [n]$, the joint cumulant $\kappa(X_i: i\in A)$ of $\mathbf X$ can be computed as 
\[
\kappa(X_i: i\in A) = \sum_{\pi \in \Pi(A)} (|\pi|-1)! (-1)^{|\pi|-1} \prod_{B\in \pi} \mathbb{E} \left[\prod_{i\in B} X_i \right] \ ,
\]
with $|\pi|$ the cardinality of $\pi$.
\end{definition}
Let us remark that, by some straightforward combinatorics, it follows from the previous definition that
\begin{equation}\label{eq:mom_to_cum}
\mathbb{E} \left[\prod_{i\in A} X_i \right] = \sum_{\pi \in \Pi(A)} \prod_{B\in \pi} \kappa(X_i: i\in B) \ .
\end{equation}
If $A=\{i,j\}$, $i,j\in[n]$, then the joint cumulant $\kappa(X_i,X_j)$ is the covariance between $X_{i}$ and $X_{j}$. We stress that, for a real-valued random variable $X$, one has the equality
\[
\kappa(\underbrace{X,\ldots,X}_{n\text{ times}})=\kappa_n(X) \ ,\quad n\in \N \ ,
\]
which we call the \emph{$n$-th cumulant of $X$}.

\subsection{Functions of the Gaussian free field and white noise}\label{sec:GFF} 

Let \mbox{$U\subset\R^d$}, $d\geq 2$, be a non-empty bounded connected open set with $\mathcal C^1$ boundary. Denote by $(U_{\eps}, E_{\eps})$ the graph with vertex set $U_\eps \coloneqq U/\eps\cap\Z^d$ and edge set $E_\eps$ defined as the bonds induced  by the hypercubic lattice $\Z^d$ on $U_\eps$. For an (oriented) edge $e \in E_{\eps}$ of the graph, we denote by $e^+$ its tip and $e^-$ its tail, and write the edge as $e = (e^-, e^+)$. Consider $\mathcal{E} \coloneqq \{e_i\}_{\,1\leq i\leq d}$, the canonical basis of $\R^d$. Since we will use approximations via grid points, we need to introduce, for any $t\in \R^d$, its floor function as
\[
    \floor{t} \coloneqq \text{the unique }z\in\Z^d\text{ such that }t\in z+[0,1)^d \ .
\]

\begin{definition}[Discrete Laplacian on a graph]
We define the (normalized) \emph{discrete Laplacian} with respect to a vertex set $V\subseteq \Z^d$ as
\begin{equation}\label{eq:laplacian}
    \Delta_V(x,y) \coloneqq
    \begin{dcases}
    \hfil -1 & \text{if } x=y \ ,\\
    \hfil \frac{1}{2d} & \text{if } x \sim y \ ,\\
    \hfil 0 & \text{otherwise} \ .
    \end{dcases}
\end{equation}
where $x,y \in V$ and $x \sim y$ denotes that $x$ and $y$ are nearest neighbors. For any function $f:V \to \mathcal \R$ we define 
\begin{equation}\label{eq:laplacian_on_function}
	\Delta_V f(x) \coloneqq 
	\sum_{y \in V}\Delta_V(x,y) f(y) =
	\frac1{2d} \sum_{y \sim x} (f(y)-f(x)) \ , \quad x \in V \ .
\end{equation}
\end{definition}

\noindent
Call the outer boundary of $V$ as
\[
\partial^{\mathrm{ex}}V \coloneqq \{x\in\Z^d\setminus V:\,\exists \,y\in V:\,x\sim y\} \ .
\]
\begin{definition}[Discrete Green's function]\label{def:dGF}
The Green's function $G_V(x,\cdot) : V \cup \partial^\text{ex}V \to \R$, for $x \in V$, with Dirichlet boundary conditions is defined as the solution of
\[
\begin{dcases}
-\Delta_{V} G_{V}(x,y) = \delta_x(y) &\text{ if } y\in V \ , \\
\hfil G_{\Lambda}(x,y) = 0 &\text{ if } y\in \partial^{\mathrm{ex}} V \ ,
\end{dcases}
\]
where $\delta$ is the Dirac delta function.
\end{definition}

\begin{remark}
    When $V = \Z^d$, we ask for the extra condition $G_V(x,y) \to 0$ as $\|y\|\to\infty$.
\end{remark}



Denote by $G_0(\cdot,\cdot)$ the Green's function for the whole grid $\Z^d$ when $d\geq3$, or with a slight abuse of notation the potential kernel for $d=2$. This abuse of notation is motivated by the fact that we will only be interested in the discrete differences of $G_0$, which exist for the infinite-volume grid in any dimension. Notice that $G_0(\cdot,\cdot)$ is translation invariant; that is, $G_0(x,y) = G_0(0,y-x)$ for all $x,y\in\Z^d$.  
\begin{definition}[Continuum Green's function]\label{def:cGFF}
The {continuum Green's function} $G_U$ on $\overline U \subset \R^d$ is the solution (in the sense of distributions) of 
\begin{equation}\label{def:cGF}
    \begin{dcases}
        \Delta G_U(\cdot, y) = -\delta_y(\cdot) \ &\text{on } U \ ,\\
        G_U(\cdot,y) = 0 \ &\text{on } \partial U
    \end{dcases}
\end{equation}
for $y\in U$, where $\Delta$ denotes the continuum Laplacian and $\overline U$ is the closure of $U$.
\end{definition}
For an exhaustive treatment on Green's functions we refer to \citet{Evans2010,lawlerlimic} and \citet{spitzer}. 

\begin{definition}[Discrete Gaussian free field, {\citet[Section 2.1]{sznitman2012topics}}]\label{def:dGFF}
Let \mbox{$\Lambda\subset \Z^d$} be finite. The discrete Gaussian free field (DGFF) $(\Gamma(x))_{x\in \Lambda}$ with $0$-boundary condition is defined as the (unique) centered Gaussian field with covariance given by
\[
    \E\left[\Gamma(x) \Gamma(y)\right] = G_\Lambda(x,y) \ , \quad x,y \in \Lambda \ .
\]
\end{definition}
Define for an oriented edge $e = (e^-, e^+) \in E_\eps$ the {\it gradient DGFF $\nabla_e\Gamma$} as
\begin{equation}
    \nabla_e\Gamma(e^-) \coloneqq \Gamma(e^+) - \Gamma(e^-) \ .
\end{equation}
In the following, we will define the main object of interest.

\begin{definition}[Gradient squared of the DGFF]\label{def:GradGFF}
The discrete stochastic field $\Phi_\eps$ given by
\[
   \Phi_\eps(x) \coloneqq \sum_{i=1}^d{:\!\left({\nabla_{e_i}\Gamma}(x)\right)^2\!:} \ , \quad x \in U_\eps \ ,
\]
is called the gradient squared of the DGFF, where $: \cdot :$ denotes the Wick product; that is, $:\!X\!:\, = X - \E[X]$ for any random variable $X$.
\end{definition}

The family of random fields $\left(\Phi_\eps\right)_{\eps>0}$ is a family of distributions, which is defined to act on a given test function $f\in\mathcal C_c^\infty(U)$ as
\begin{equation}\label{def:action}
    \langle\Phi_\eps,f\rangle \coloneqq \int_U {\Phi_\eps\left(\floor{x/\eps}\right) f(x) \, \d x} \ ,
\end{equation}
where we take $\Phi_\eps(\floor{x/\eps}) = 0$ in case $\floor{x/\eps} \notin U_\eps$, which can happen if $\eps$ is not small enough.

When no ambiguities appear, we will write $\left(\nabla_i\Gamma(x)\right)^2$ for $\left(\nabla_{e_i}\Gamma(x)\right)^2$, with $i=1,2,\dots,d$. 
For any given function $f:\Z^d\times\Z^d\to\R$, we define the {\it discrete gradient} in the first argument and direction $e_i\in\mathcal E$ as
\[
    \nabla_{e_i}^{(1)}f(x,y) \coloneqq f(x+e_i,y) - f(x,y) ,
\]
with $x,y\in\Z^d$, and analogously
\[
    \nabla_{e_i}^{(2)}f(x,y) \coloneqq f(x,y+e_i) - f(x,y)
\]
for the second argument. Once again, when no ambiguities arise, we will write $\nabla_i^{(1)}$ for $\nabla_{e_i}^{(1)}$, and analogously for the second argument. 

For a continuum function $g:U\times U\to\R$, $\partial_{e_i}^{(1)}g(x,y)$ denotes the partial derivative of $g$ with respect to the first argument in the direction of $e_i$, while $\partial_{e_i}^{(2)}g(x,y)$ corresponds to the second argument, also in the direction $e_i$. The same abuse of notation on the subindex $e_i$ applies here.

\begin{definition}[Gaussian white noise]\label{def:WN}
The $d$-dimensional Gaussian white noise $W$ is the centered Gaussian random distribution on $U \subset \R^d$ such that, for every $f,g\in L^2(U)$, 
\[
\mathbb{E}\left[\langle W,f \rangle \langle W,g \rangle\right] = \int_{U} f(x) g(x) \d x  \ .
\]
In other words, $\langle W,f \rangle \sim \mathcal N \big(0, \|f\|^2_{L^2(U)} \big)$ for every $f\in L^2(U)$. 
\end{definition}

\subsection{Besov-H\"older spaces}

In this Subsection we will define the functional space on which convergence will take place. We will use \citet{mf} as a main reference. Local H\"older and Besov spaces of negative regularity on general domains are natural functional spaces when considering scaling limits of certain random distributions or in the context of non-linear stochastic PDE's, see e.g. \citet{mf,Hairer} especially when those objects are well-defined on a domain $U \subset \R^d$ but not necessarily on the full space $\R^d$. They are particularly suited for fields which show bad behaviour near the boundary $\partial U$.

Let $(V_n)_{n\in \mathbb{Z}}$ be a dense subsequence of subspaces of $L^2(\R^d)$ such that $\bigcap_{n\in \mathbb{Z}} V_n = \{0\}$. Denote by $W_n$ the orthogonal complement of $V_n$ in $V_{n+1}$ for all $n\in \mathbb{Z}$. Furthermore, we assume the following properties. The function $f\in V_n$ if and only if $f(2^{-n} \cdot) \in V_0$. Let $(\phi(\cdot -k))_{k\in \mathbb{Z}^d}$ be an orthonormal basis of $V_0$ and $(\psi^{(i)}(\cdot -k))_{i<2^d, k\in \mathbb{Z}^d}$ an orthonormal basis of $W_0$. Note that $\phi, (\psi^{(i)})_{i<2^d}$ both belong to $\mathcal{C}^r_c(\R^d)$ for some positive integer $r \in \mathbb{N}$, that is, they belong to the set of $r$ times continuously differentiable functions on $\R^d$ with compact support. For more details about wavelet analysis, see~\cite{daubechies1992ten,meyer1992wavelets}.

Define $\Lambda_n=\mathbb{Z}^d/2^n$ and
\[
\phi_{n,x}(y) = 2^{dn/2}\phi\big(2^n(y-x)\big)
\]
resp.
\[
\psi^{(i)}_{n,x}(y) = 2^{dn/2}\psi^{(i)}\big(2^n(y-x)\big)
\]
which makes $(\phi_{n,x})_{x\in \Lambda_n}$ an orthonormal basis of $V_n$ resp. $(\psi^{(i)}_{n,x})_{x\in \Lambda_n, i<2^d, n\in \mathbb{Z}}$ an orthonormal basis of $L^2(\R^d)$. Every function $f\in L^2(\R^d)$ can be decomposed into
\[
f = \mathcal{V}_k f + \sum_{n=k}^{\infty} \mathcal{W}_n f
\]
for any fixed $k\in \mathbb{Z}$, where $\mathcal{V}_n$ resp. $\mathcal{W}_n$ are the orthogonal projections onto $V_n$ resp. $W_n$ defined as
\[
\mathcal{V}_n f = \sum_{x\in \Lambda_n} \big\langle f,\phi_{n,x}\big\rangle \phi_{n,x} \ , \quad \mathcal{W}_n f = \sum_{i<2^d, x\in \Lambda_n} \big\langle f,\psi^{(i)}_{n,x}\big\rangle \psi^{(i)}_{n,x} \ .
\]

\begin{definition}[Besov spaces]\label{def:locBes}
Let $\alpha \in \R$, $|\alpha|<r$, $p,q\in [1,\infty]$ and $U\subset \R^d$. The Besov space $\mathcal{B}^{\alpha}_{p,q}(U)$ is the completion of $\mathcal{C}^{\infty}_c(U)$ with respect to the norm
\[
\|f\|_{\mathcal{B}^{\alpha}_{p,q}} \coloneqq \|\mathcal{V}_0 f\|_{L^p} + \left\| \big(2^{\alpha n} \|\mathcal{W}_n f\|_{L^p}\big)_{n\in \mathbb{N}}\right\|_{\ell^q} \ .
\]
The local Besov space $\mathcal{B}^{\alpha, \mathrm{loc}}_{p,q}(U)$ is the completion of $\mathcal{C}^{\infty}(U)$ with respect to the family of semi-norms
\[
f \mapsto \| \widetilde\chi f\|_{\mathcal{B}^{\alpha}_{p,q}}
\]
indexed by $\widetilde\chi \in \mathcal{C}^{\infty}_c(U)$.
\end{definition}

We will use the following embedding property of Besov spaces in the tightness argument.

\begin{lemma}[{\citet[Remark 2.12]{mf}}]\label{lemma:embedding}
    For any $1\leq p_1 \leq p_2 \leq\infty$, $q\in[1,\infty]$ and $\alpha\in\R$, the space $\mathcal B_{p_2,q}^{\alpha,\mathrm{loc}}(U)$ is continuously embedded in $\mathcal B_{p_1,q}^{\alpha,\mathrm{loc}}(U)$.
\end{lemma}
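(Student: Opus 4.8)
The plan is to reduce the statement to a scale-by-scale comparison of $L^{p_1}$ and $L^{p_2}$ norms of the wavelet projections, exploiting that, after applying a cutoff, every object involved is compactly supported. Fix a test function $\widetilde\chi\in\mathcal C^\infty_c(U)$ and set $g\coloneqq\widetilde\chi f$, which is supported in the fixed compact set $K\coloneqq\mathrm{supp}(\widetilde\chi)$. By Definition \ref{def:locBes}, the local norms are the semi-norms $f\mapsto\|\widetilde\chi f\|_{\mathcal B^\alpha_{p,q}}$, so it suffices to bound $\|g\|_{\mathcal B^\alpha_{p_1,q}}$ by a constant (depending on $\widetilde\chi$ but not on $f$) times $\|g\|_{\mathcal B^\alpha_{p_2,q}}$, treating the two contributions $\|\mathcal V_0 g\|_{L^p}$ and $(2^{\alpha n}\|\mathcal W_n g\|_{L^p})_n$ separately.

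The first step is to localize the projections uniformly in the scale. Since $\phi$ and the $\psi^{(i)}$ lie in $\mathcal C^r_c(\R^d)$, there is $R>0$ with $\mathrm{supp}(\phi),\mathrm{supp}(\psi^{(i)})\subset B(0,R)$, so that $\psi^{(i)}_{n,x}$ is supported in $B(x,R\,2^{-n})$. Hence the coefficient $\langle g,\psi^{(i)}_{n,x}\rangle$ vanishes unless $x$ lies within distance $R\,2^{-n}$ of $K$, and for such $x$ the function $\psi^{(i)}_{n,x}$ is supported within distance $2R\,2^{-n}\le 2R$ of $K$ for every $n\ge 0$. The same reasoning applies to $\mathcal V_0 g$. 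Consequently $\mathcal V_0 g$ and all the $\mathcal W_n g$ are supported in the single fixed compact set $K'\coloneqq\{x:\mathrm{dist}(x,K)\le 2R\}$, uniformly in $n\in\mathbb N$.

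The core estimate is then Hölder's inequality on the finite-measure set $K'$: for $p_1\le p_2$ and any $h$ supported in $K'$,
\[
\|h\|_{L^{p_1}}=\|h\|_{L^{p_1}(K')}\le |K'|^{1/p_1-1/p_2}\,\|h\|_{L^{p_2}(K')}=|K'|^{1/p_1-1/p_2}\,\|h\|_{L^{p_2}} ,
\]
where the exponent $1/p_1-1/p_2\ge 0$ makes the constant $C\coloneqq|K'|^{1/p_1-1/p_2}$ finite (the computation also covers the endpoint $p_2=\infty$). Applying this with $h=\mathcal V_0 g$ and $h=\mathcal W_n g$, and using that $C$ does not depend on $n$, I obtain $\|\mathcal V_0 g\|_{L^{p_1}}\le C\,\|\mathcal V_0 g\|_{L^{p_2}}$ and $2^{\alpha n}\|\mathcal W_n g\|_{L^{p_1}}\le C\,2^{\alpha n}\|\mathcal W_n g\|_{L^{p_2}}$ for each $n$. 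Taking the $\ell^q$ norm of the latter over $n$ and adding the two terms gives $\|g\|_{\mathcal B^\alpha_{p_1,q}}\le C\,\|g\|_{\mathcal B^\alpha_{p_2,q}}$, which is precisely the desired bound on the semi-norms.

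I expect the main obstacle to be the uniformity of the localization in the scale $n$: one must verify that the supports of $\mathcal W_n g$ do not spread out as $n\to\infty$, so that a single compact set $K'$, and hence a single Hölder constant, works simultaneously across all scales. This is exactly where the compact support of the mother wavelets $\phi,\psi^{(i)}$ is indispensable; without it the comparison constant could grow with $n$ and the $\ell^q$ summation would break down. Once uniform localization is in place, upgrading the fixed-$\widetilde\chi$ estimate to continuity of the embedding between the two locally convex spaces is routine, since the bound holds for every cutoff $\widetilde\chi$ with a constant depending only on $\mathrm{supp}(\widetilde\chi)$.
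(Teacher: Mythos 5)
Your argument is correct. The paper itself gives no proof of this lemma --- it is quoted directly from \citet[Remark 2.12]{mf} --- and your Hölder-on-a-compact-set argument is precisely the standard justification underlying that remark: the cutoff $\widetilde\chi$ and the compact supports of $\phi,\psi^{(i)}$ localize $\mathcal V_0(\widetilde\chi f)$ and every $\mathcal W_n(\widetilde\chi f)$, $n\geq 0$, in a single fixed compact set, after which $\|\cdot\|_{L^{p_1}}\lesssim\|\cdot\|_{L^{p_2}}$ scale by scale, uniformly in $n$, and the $\ell^q$ summation goes through unchanged. Your closing remark correctly identifies the one point that needs care (uniformity of the localization in $n$), and your proof does handle it.
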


Finally let us define the functional space where convergence will take place, the space of distributions with locally $\alpha$-H\"older regularity. For that, we denote as $\mathcal C^r$ the set of $r$ times continuously differentiable functions on $\R^d$, with $r\in\N\cup\{\infty\}$. We also define the $\mathcal C^r$ norm of a function $f\in\mathcal C^r$ as
\[
    \|f\|_{\mathcal C^r} \coloneqq \sum_{|i|\leq r} \|\partial_i f\|_{L^\infty} \ ,
\]
being $i\in\N^d$ a multi-index.

\begin{definition}[H\"older spaces]\label{def:locHoel}
Let $\alpha < 0,r_0=-\lfloor \alpha \rfloor$. The space $\mathcal C^{\alpha}_{\mathrm{loc}}(U)$ is called the locally H\"older space with regularity $\alpha \in \R$ on the domain $U$. It is the completion of $\mathcal{C}^{\infty}_c(U)$ with respect to the family of semi-norms
\[
f \mapsto \|\widetilde\chi f\|_{\mathcal{C}^{\alpha}}
\]
indexed by $\widetilde\chi \in \mathcal{C}^{\infty}_c(U)$ and
\[
\| f\|_{\mathcal C^{\alpha}} = \sup_{\lambda \in (0,1]} \sup_{x\in \R^d} \sup_{\eta \in B^{r_0}} \lambda^{-\alpha} \int_{\R^d} f(\cdot) \, \lambda^{-d} \,\eta\left(\frac{\cdot -x}{\lambda}\right) \ ,
\]
where
\[
B^{r_0} = \left\{ \eta \in \mathcal C^{r_0}:\|\eta \|_{\mathcal C^{r_0}} \leq 1, \, \supp \eta \subset B(0,1) \right\} \ .
\]
\end{definition}

Note that by \citet[Remark 2.18]{mf} one has $\mathcal C^{\alpha}_{\mathrm{loc}}(U) = \mathcal{B}^{\alpha, \mathrm{loc}}_{\infty,\infty}(U)$.
\section{Main results}\label{sec:main}

The first result we would like to present is an explicit computation of the $k$-point correlation function of the  gradient squared of the DGFF field $\Phi_\eps$ defined in Definition \ref{def:GradGFF}.

\begin{theorem}\label{thm:cumulants}
Let $\eps>0$ and $k \in \mathbb{N}$ and let the points $x^{(1)},\dots,x^{(k)}$ in $U \subset \R^d$, $d\geq 2$, be given. Define $x^{(j)}_\eps \coloneqq \floor{x^{(j)}/\eps}$ and choose $\eps$ small enough so that $x^{(j)}_\eps \in U_\eps$, for all $j = 1,\dots,k$. Then
\begin{equation}\label{eq:cumulants}
    \E\left[\prod_{j=1}^k{\Phi_\eps\big(x^{(j)}_\eps\big)}\right] = \sum_{\pi\in\Pi([k])}\prod_{B\in\pi}2^{|B|-1}\sum_{\sigma\in S_\mathrm{cycl}^0(B)} \sum_{\eta:B\to\mathcal{E}}\prod_{j\in B} \nabla_{\eta(j)}^{(1)}\nabla_{\eta(\sigma(j))}^{(2)}G_{U_\eps}\big(x^{(j)}_\eps,x^{(\sigma(j))}_\eps\big) \,
\end{equation}
where $G_{U_{\eps}}(\cdot,\cdot)$ was defined in Definition \ref{def:dGF}.
Moreover if $x^{(i)} \neq x^{(j)}$ for all $i \neq j$, then
\begin{multline}\label{eq:moments_limit}
    \lim_{\eps\to 0} \eps^{-dk}\E\left[\prod_{j=1}^k{\Phi_\eps\big(x^{(j)}_\eps\big)}\right] = \sum_{\pi\in\Pi([k])}\prod_{B\in\pi}2^{|B|-1}
    \sum_{\sigma\in S_\mathrm{cycl}^0(B)} \\
    \sum_{\eta:B\to\mathcal{E}}\prod_{j\in B} \partial_{\eta(j)}^{(1)}\partial_{\eta(\sigma(j))}^{(2)}G_U \big(x^{(j)}, x^{(\sigma(j))}\big) \,
\end{multline}
where $G_U(\cdot,\cdot)$ was defined in Equation \eqref{def:cGF}.
\end{theorem}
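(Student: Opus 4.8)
The plan is to establish the exact identity \eqref{eq:cumulants} by a purely algebraic computation valid on any finite graph, and then to deduce the scaling limit \eqref{eq:moments_limit} from the convergence of the doubly-differenced discrete Green's function to its continuum analogue. For \eqref{eq:cumulants} I would first apply the moment-to-cumulant formula of Definition \ref{def:cum_mult} with $V=[k]$ and $X_j=\Phi_\eps(x^{(j)}_\eps)$, reducing the left-hand side to $\sum_{\pi\in\Pi([k])}\prod_{B\in\pi}\kappa(\Phi_\eps(x^{(j)}_\eps):j\in B)$. Since $\Phi_\eps(x)=\sum_{i=1}^d{:\!(\nabla_i\Gamma(x))^2\!:}$ and joint cumulants are multilinear in their arguments, each block cumulant expands over direction assignments $\eta:B\to\mathcal E$ into cumulants of Wick squares $\kappa({:\!g_j^2\!:}:j\in B)$ of the centered Gaussian family $g_j=\nabla_{\eta(j)}\Gamma(x^{(j)}_\eps)$. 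Using $\E[\Gamma(x)\Gamma(y)]=G_{U_\eps}(x,y)$ and linearity of the discrete gradient, the covariances are $\E[g_jg_l]=\nabla^{(1)}_{\eta(j)}\nabla^{(2)}_{\eta(l)}G_{U_\eps}(x^{(j)}_\eps,x^{(l)}_\eps)$.

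The algebraic heart is the Gaussian identity
\[
\kappa\big({:\!g_j^2\!:}:j\in B\big)=2^{|B|-1}\sum_{\sigma\in S^0_{\mathrm{cycl}}(B)}\prod_{j\in B}\E[g_jg_{\sigma(j)}],
\]
which I would prove via the diagrammatic Wick representation: the joint cumulant equals the sum over \emph{connected} pairing diagrams of the $2|B|$ legs (two per factor ${:\!g_j^2\!:}$) in which self-contractions within a factor are forbidden. Every vertex then has degree two, so a connected diagram is a single cycle on $B$; in particular blocks of size one give no diagram, matching $S^0_{\mathrm{cycl}}(B)=\emptyset$ and $\kappa({:\!g^2\!:})=0$. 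Counting the leg-pairings that realise a fixed undirected Hamiltonian cycle gives $2^{|B|}$ when $|B|\ge 3$ and $2$ when $|B|=2$, whereas summing over the directed cycles $\sigma\in S^0_{\mathrm{cycl}}(B)$ visits each undirected cycle twice for $|B|\ge 3$ and once for $|B|=2$; the prefactor $2^{|B|-1}$ reconciles both cases, and the symmetry of the covariance makes all equivalent diagrams share the same value $\prod_{j\in B}\E[g_jg_{\sigma(j)}]$. After verifying the small cases $|B|=1,2,3$ directly, substituting this identity together with the covariance into the moment-cumulant expansion gives \eqref{eq:cumulants}.

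For the limit \eqref{eq:moments_limit} I would pass to the limit term by term, the number of summands being finite and $\eps$-independent. Everything reduces to the asymptotics of a single factor $\nabla^{(1)}_{\eta(j)}\nabla^{(2)}_{\eta(\sigma(j))}G_{U_\eps}(x^{(j)}_\eps,x^{(\sigma(j))}_\eps)$. As $\sigma$ has no fixed points and $x^{(i)}\neq x^{(j)}$ for $i\neq j$, the two arguments stay macroscopically separated, hence at lattice distance of order $\eps^{-1}$ and away from the diagonal singularity. Invoking the comparison $G_{U_\eps}(\floor{x/\eps},\floor{y/\eps})\approx\eps^{d-2}G_U(x,y)$ off-diagonal (up to the fixed constant relating the discrete and continuum Green's functions) and replacing each unit lattice shift by a macroscopic shift of size $\eps$, every discrete difference contributes one power of $\eps$, so the rescaled mixed difference $\eps^{-d}\nabla^{(1)}_{\eta(j)}\nabla^{(2)}_{\eta(\sigma(j))}G_{U_\eps}$ converges to $\partial^{(1)}_{\eta(j)}\partial^{(2)}_{\eta(\sigma(j))}G_U(x^{(j)},x^{(\sigma(j))})$. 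Since each block contributes $|B|$ such factors and $\sum_{B\in\pi}|B|=k$, every summand carries a total factor $\eps^{dk}$ exactly cancelled by the prefactor $\eps^{-dk}$, yielding \eqref{eq:moments_limit}.

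The main obstacle is precisely this last convergence: showing that the rescaled mixed discrete difference of $G_{U_\eps}$ converges to the mixed partial derivative of $G_U$ for $x\neq y$, with sufficient uniformity up to $\partial U$ (which is why the boundary is assumed smooth). This demands quantitative control of the discrete Green's function and of its first and second differences, together with the precise dimensional constant tying $G_{U_\eps}$ to $G_U$, and a dedicated treatment of $d=2$, where $G_0$ is only the potential kernel: there the Green's function itself does not converge but its discrete differences do, which is exactly the combination appearing in \eqref{eq:cumulants}. I expect these discrete potential-theoretic estimates, drawn from or adapted to references such as \citet{lawlerlimic}, to be the crux of the argument; routine bookkeeping such as the discrepancy between $\floor{x/\eps}+e_i$ and $\floor{(x+\eps e_i)/\eps}$ is absorbed into the same estimates.
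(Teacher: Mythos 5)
Your proposal is correct and follows essentially the same route as the paper: both rest on the Wick/Feynman-diagram calculus for products of Wick squares of jointly Gaussian variables, identify the surviving diagrams with fixed-point-free cyclic permutations carrying the multiplicity $2^{|B|-1}$, and reduce the scaling limit to the off-diagonal convergence of $\eps^{-d}\nabla^{(1)}_a\nabla^{(2)}_b G_{U_\eps}$ to $\partial^{(1)}_a\partial^{(2)}_b G_U$, which the paper imports from Kassel--Wu as Lemma \ref{lemma:conv_green_diff}. The only organizational difference is that you enter through the moment-to-cumulant formula and the connected-diagram identity for each block, whereas the paper expands the full moment over all complete diagrams (Theorem \ref{thm:janson}) and then reads off the cycle decomposition; the two presentations are interchangeable.
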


\begin{remark}
    It will sometimes be useful to write \eqref{eq:cumulants} as the equivalent expression
    \begin{equation}\label{eq:cumulants2}
        \E\left[\prod_{j=1}^k{\Phi_\eps\big(x^{(j)}_\eps\big)}\right] = \sum_{\substack{\pi\in\Pi([k])\\\text{\normalfont{w/o singletons}}}} \prod_{B\in\pi}2^{|B|-1}\sum_{\sigma\in S_\mathrm{cycl}(B)} \sum_{\eta:B\to\mathcal{E}}\prod_{j\in B} \nabla_{\eta(j)}^{(1)}\nabla_{\eta(\sigma(j))}^{(2)}G_{U_\eps}\big(x^{(j)}_\eps,x^{(\sigma(j))}_\eps\big) \ ,
    \end{equation}
    where the condition of $\sigma$ belonging to full cycles of $B$ without fixed points is inserted in the no-singleton condition of the permutations $\pi$.
\end{remark}

\begin{remark}
    From the above expression it is immediate to see that the $2$-point function is given by
    \[
        \E\left[\Phi_\eps\big(x_\eps\big) \Phi_\eps\big(y_\eps\big)\right] =2\sum_{i,j\in[d]} \left(\nabla_i^{(1)}\nabla_j^{(2)}G_{U_\eps}\big(x_\eps,y_\eps\big)\right)^2 \ ,
    \]
    which will be useful later on.
\end{remark}

The following Corollary is a direct consequence of Theorem \ref{thm:cumulants}.

\begin{corollary}\label{cor:cumulants}
Let $\ell\in\N$. The joint cumulants $\kappa\left(\Phi_\eps\big(x^{(j)}_\eps\big):j\in[\ell],\,x^{(j)}_\eps\in U_\eps\right)$ of the field $\Phi_{\eps}$ at ``level'' $\eps>0$ are given by
\begin{equation}\label{eq:cumulants3}
    \kappa\Big(\Phi_\eps\big(x^{(j)}_\eps\big):j\in[\ell]\Big) = 2^{\ell-1}\sum_{\sigma\in S_\mathrm{cycl}^0([\ell])} \sum_{\eta:[\ell]\to\mathcal{E}}\prod_{j=1}^\ell \nabla_{\eta(j)}^{(1)}\nabla_{\eta(\sigma(j))}^{(2)}G_{U_\eps}\big(x^{(j)}_\eps,x^{(\sigma(j))}_\eps\big) \ .
\end{equation}
Moreover if $x^{(i)} \neq x^{(j)}$ for all $i \neq j$, then
\begin{equation}\label{eq:limiting_cumulants}
    \lim_{\eps\to 0} \eps^{-d\ell} \kappa\Big(\Phi_\eps\big(x^{(j)}_\eps\big):j\in[\ell]\Big) = 2^{\ell-1}\sum_{\sigma\in S_\mathrm{cycl}^0([\ell])} \sum_{\eta:[\ell]\to\mathcal{E}}\prod_{j=1}^\ell \partial_{\eta(j)}^{(1)}\partial_{\eta(\sigma(j))}^{(2)}G_U\big(x^{(j)},x^{(\sigma(j))}\big) \ .
\end{equation}
\end{corollary}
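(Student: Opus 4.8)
The plan is to deduce the Corollary directly from Theorem \ref{thm:cumulants} by invoking the defining moment-cumulant relation of Definition \ref{def:cum_mult}, which uniquely determines the joint cumulants from the joint moments. The key structural observation is that the moment formula \eqref{eq:cumulants} is already presented in the canonical shape of a sum over partitions of $[k]$ of products over blocks, namely
\[
\E\left[\prod_{j=1}^k \Phi_\eps\big(x^{(j)}_\eps\big)\right] = \sum_{\pi\in\Pi([k])}\prod_{B\in\pi} f_\eps(B) \ ,
\]
where for any nonempty $B\subset[k]$ I set
\[
f_\eps(B) \coloneqq 2^{|B|-1}\sum_{\sigma\in S_\mathrm{cycl}^0(B)}\sum_{\eta:B\to\mathcal E}\prod_{j\in B}\nabla_{\eta(j)}^{(1)}\nabla_{\eta(\sigma(j))}^{(2)}G_{U_\eps}\big(x^{(j)}_\eps, x^{(\sigma(j))}_\eps\big) \ .
\]

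Next I would argue that this forces $f_\eps(B)$ to be exactly the joint cumulant of the block. The relations of Definition \ref{def:cum_mult}, imposed for every subset and organized along the partition lattice, are inverted by M\"obius inversion on $\Pi$; in particular, given the moments, there is a unique family $\{g(B)\}_B$ satisfying $\E[\prod_{v\in V}X_v]=\sum_{\pi\in\Pi(V)}\prod_{B\in\pi}g(B)$ for all $V$, and that family is precisely the cumulants. Since $f_\eps(B)$ is defined intrinsically from the set $B$ and the points it indexes — not from the ambient $[k]$ — the family $\{f_\eps(B)\}_B$ satisfies these relations simultaneously for every $V\subset[k]$, and uniqueness yields $\kappa\big(\Phi_\eps(x^{(j)}_\eps):j\in B\big)=f_\eps(B)$. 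Specializing $B=[\ell]$ gives \eqref{eq:cumulants3}. A cleaner but equivalent route is a short induction on $|V|$: every partition of $V$ other than the one-block partition $\{V\}$ decomposes $V$ into proper subsets, for which the cumulants are already identified by the inductive hypothesis, so matching the two sides of Definition \ref{def:cum_mult} isolates $\kappa(\cdots:v\in V)=f_\eps(V)$.

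For the limiting statement \eqref{eq:limiting_cumulants}, I would start from the exact identity \eqref{eq:cumulants3} just obtained, multiply by $\eps^{-d\ell}$, and pass to the limit term by term. Because the right-hand side is a \emph{finite} sum over $\sigma\in S_\mathrm{cycl}^0([\ell])$ and $\eta:[\ell]\to\mathcal E$, it suffices to control each product $\prod_{j=1}^\ell \nabla_{\eta(j)}^{(1)}\nabla_{\eta(\sigma(j))}^{(2)}G_{U_\eps}(x^{(j)}_\eps,x^{(\sigma(j))}_\eps)$. Distributing one factor $\eps^{-d}$ per factor of the product, I would invoke the convergence $\eps^{-d}\nabla_{e_i}^{(1)}\nabla_{e_j}^{(2)}G_{U_\eps}(x_\eps,y_\eps)\to \partial_{e_i}^{(1)}\partial_{e_j}^{(2)}G_U(x,y)$ for distinct $x\neq y$, the pointwise scaling limit of the discrete double gradient of the Green's function already established in the proof of the second part of Theorem \ref{thm:cumulants} (equation \eqref{eq:moments_limit}). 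Interchanging the finite sum with the limit then yields \eqref{eq:limiting_cumulants}.

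The only genuinely delicate point is the consistency claim in the first step: one must check that the block weight $f_\eps(B)$ depends only on $B$ and the associated points, so that the same family works across all subsets $V$ and the M\"obius/induction argument applies. This is transparent from the formula, since $S_\mathrm{cycl}^0(B)$, the maps $\eta:B\to\mathcal E$, and the Green's-function factors are all determined by $B$ alone. Everything else is bookkeeping: the combinatorial matching in the moment-cumulant inversion and the term-by-term passage to the limit, the latter requiring only the single-factor convergence borrowed from Theorem \ref{thm:cumulants} together with finiteness of the index sets.
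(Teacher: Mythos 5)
Your proposal is correct and follows essentially the same route as the paper: the paper's own proof simply observes that the moment formula \eqref{eq:cumulants} already has the shape of the moment--cumulant relation of Definition \ref{def:cum_mult} and concludes by uniqueness, exactly as you do (you merely spell out the M\"obius-inversion/induction and the consistency of the block weights, which the paper leaves implicit). Your treatment of the limit \eqref{eq:limiting_cumulants} via term-by-term convergence of the finitely many summands using Lemma \ref{lemma:conv_green_diff} likewise matches how the paper obtains \eqref{eq:moments_limit}.
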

As already mentioned in the introduction, comparing our result with~\citet[Theorem 2]{durre} we obtain~\eqref{eq:equal_cumulants}.
 
The following proposition states that in $d=2$ the limit of the field $\Phi_{\eps}$ is conformally covariant with scale dimension 2. This result can also be deduced for the height-one field for the sandpile model, see \citet[Theorem 1]{durre}.

\begin{prop}\label{thm:conformal}
    Let $U,U' \subset \R^2$, $k\in \mathbb{N}$, $\big\{x^{(j)}\big\}_{j\in[k]}$, and $\big\{x^{(j)}_\eps\big\}_{j\in[k]}$ be as in Theorem \ref{thm:cumulants}. Furthermore let $h:U\to U'$ be a conformal mapping and call $h_\eps\big(x^{(j)}\big) \coloneqq \floor{h\big(x^{(j)}\big)/\eps}$, for $\eps$ small enough so that $h_\eps\big(x^{(j)}\big) \in U'_\eps$ for all $j\in[k]$. Then
    \[
        \lim_{\eps\to 0} \eps^{-2k}\E\left[\prod_{j=1}^k{\Phi^U_\eps\big(x^{(j)}_\eps\big)}\right] = \prod_{j=1}^k \abs{h'\big(x^{(j)}\big)}^2 \lim_{\eps\to 0} \eps^{-2k}\E\left[\prod_{j=1}^k{\Phi^{U'}_\eps\Big(h_\eps\big(x^{(j)}\big)\Big)}\right] \ ,
    \]
    where now for clarity we emphasize the dependence of $\Phi_\eps$ on its domain.
\end{prop}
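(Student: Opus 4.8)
The plan is to reduce both sides to the limiting moment formula \eqref{eq:moments_limit} of Theorem \ref{thm:cumulants}, specialised to $d=2$, and then to exploit the conformal invariance of the two-dimensional continuum Green's function. The single analytic input is the classical fact that in $d=2$ the Dirichlet Green's function is conformally invariant, $G_U(x,y) = G_{U'}(h(x),h(y))$: both sides are harmonic in $x$ off the diagonal, carry the same logarithmic singularity (the discrepancy $-\tfrac{1}{2\pi}\log(|h(x)-h(y)|/|x-y|)$ extends to a harmonic function of $x$), and vanish on the respective boundaries. Everything else is bookkeeping on the formula \eqref{eq:moments_limit}.

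First I would differentiate this identity. Writing $M_{i,i'}(x,y) \coloneqq \partial_{e_i}^{(1)}\partial_{e_{i'}}^{(2)}G_U(x,y)$ for the $2\times 2$ matrix of mixed second derivatives, the chain rule gives $M(x,y) = Dh(x)^{\mathsf T}\, M'(h(x),h(y))\, Dh(y)$, where $M'$ is the analogous matrix built from $G_{U'}$ and $Dh$ is the Jacobian of $h$. Conformality, i.e.\ the Cauchy--Riemann equations, says $Dh(z) = \abs{h'(z)}\,R(z)$ with $R(z)$ orthogonal, so that $Dh(z)\,Dh(z)^{\mathsf T} = \abs{h'(z)}^2 I$.

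The key combinatorial step is to recognise the sum over $\eta$ in \eqref{eq:moments_limit} as a trace. Fix a partition $\pi$, a block $B$, and $\sigma \in S_\mathrm{cycl}^0(B)$, say $\sigma=(j_1\,j_2\,\cdots\,j_m)$. Each direction $\eta(j)$ is the first-slot index at the factor indexed by $j$ and the second-slot index at the factor indexed by $\sigma^{-1}(j)$; hence summing over $\eta:B\to\mathcal E$ contracts the matrix indices around the cycle and yields
\[
\sum_{\eta:B\to\mathcal E}\prod_{j\in B}M_{\eta(j),\eta(\sigma(j))}\big(x^{(j)},x^{(\sigma(j))}\big) = \mathrm{Tr}\Big(\textstyle\prod_{l=1}^{m}M\big(x^{(j_l)},x^{(j_{l+1})}\big)\Big),
\]
with indices read cyclically. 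Substituting the transformation rule for $M$ and using cyclicity of the trace, at every junction one meets a factor $Dh(x^{(j_{l+1})})\,Dh(x^{(j_{l+1})})^{\mathsf T} = \abs{h'(x^{(j_{l+1})})}^2 I$, which collapses to a scalar. This extracts exactly one factor $\abs{h'(x^{(j)})}^2$ per vertex of the cycle, leaving the identical trace built from $M'$ at the image points:
\[
\mathrm{Tr}\Big(\textstyle\prod_l M(x^{(j_l)},x^{(j_{l+1})})\Big) = \Big(\prod_{j\in B}\abs{h'(x^{(j)})}^2\Big)\,\mathrm{Tr}\Big(\textstyle\prod_l M'\big(h(x^{(j_l)}),h(x^{(j_{l+1})})\big)\Big).
\]

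Finally I would reassemble the sum. The factor $\prod_{j\in B}\abs{h'(x^{(j)})}^2$ is independent of $\sigma$ and $\eta$, so it pulls out of those sums, and since the blocks of $\pi$ partition $[k]$ the product over all blocks telescopes to $\prod_{j=1}^k\abs{h'(x^{(j)})}^2$. The remaining sum over $\pi$, $\sigma$ and $\eta$ of the traces built from $M'$, carrying the weights $2^{|B|-1}$, is by another application of \eqref{eq:moments_limit} — now on $U'$ with the distinct points $h(x^{(j)})$ — precisely $\lim_{\eps\to0}\eps^{-2k}\E[\prod_j\Phi_\eps(h_\eps(x^{(j)}))]$, which yields the claim. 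I expect the main obstacle to be the index bookkeeping in the trace identity, namely correctly pairing the Jacobians at each junction so that the orthogonality relation collapses them cleanly, rather than any analytic difficulty, the conformal invariance of $G_U$ being classical.
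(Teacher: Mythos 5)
Your proof is correct and follows essentially the same route as the paper's: both rest on the conformal invariance of the two-dimensional continuum Green's function and use the chain rule together with the Cauchy--Riemann equations to extract one factor $\abs{h'\big(x^{(j)}\big)}^2$ per point after summing over the direction assignments $\eta$. Your trace formulation of the $\eta$-sum is just a more explicit rendering of the contraction step that the paper sketches (the paper phrases the identical argument at the level of the limiting cumulants \eqref{eq:limiting_cumulants} and then reassembles the moments).
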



Finally we will show that the rescaled gradient squared of the discrete Gaussian free field will converge to white noise in some appropriate locally H\"older space with negative regularity $\alpha$ in $d\geq 2$ dimensions. This space is denoted as $\mathcal C_\mathrm{loc}^\alpha(U)$ (see Definition \ref{def:locHoel}).

\begin{theorem}\label{thm:goes_to_WN}
    Let $U\subset \R^d$ with $d\geq 2$. The gradient squared of the discrete Gaussian free field $\Phi_{\eps}$ converges in the following sense as $\eps\to0$:
    \[
        \frac{\eps^{-d/2}}{\sqrt \chi} \Phi_{\eps} \overset{d}\longrightarrow W,
    \]
    where the white noise $W$ is defined in Definition \ref{def:WN}.
    This convergence takes place in $\mathcal C^{\alpha}_{\mathrm{loc}}(U)$ for any $\alpha < -d/2$, and the constant $\chi$ defined as
    \begin{equation}\label{eq:chi_def}
        \chi \coloneqq 2\sum_{v\in\Z^d} \sum_{i,j\in[d]} \left(\nabla_i^{(1)}\nabla_j^{(2)}G_0(0,v)\right)^2
    \end{equation}
    is well-defined, in the sense that $0<\chi<\infty$.
\end{theorem}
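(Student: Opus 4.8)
The plan is to establish the two classical ingredients behind convergence of a family of random distributions: convergence of the finite-dimensional distributions of $\langle \eps^{-d/2}\Phi_\eps/\sqrt\chi, f\rangle$ to those of white noise, together with tightness of the family in $\mathcal C^\alpha_{\mathrm{loc}}(U)$ for every $\alpha<-d/2$. Before either step I would settle the well-definedness of $\chi$. Positivity is immediate since \eqref{eq:chi_def} is a sum of squares with at least one nonvanishing nearest-neighbour term. Finiteness rests on the rapid decay of the transfer current advertised in the abstract: the second mixed difference $\nabla_i^{(1)}\nabla_j^{(2)}G_0(0,v)$ decays like $\abs{v}^{-d}$ as $\abs{v}\to\infty$, so each summand in \eqref{eq:chi_def} is $\mathcal O(\abs{v}^{-2d})$ and $\sum_{v\in\Z^d}\abs{v}^{-2d}<\infty$ because $2d>d$.

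For the finite-dimensional distributions I would compute the joint cumulants of the tested field and check they converge to those of $W$; by the Cram\'er--Wold device it suffices to treat a single $f\in\mathcal C^\infty_c(U)$. Writing the action \eqref{def:action} as a Riemann sum $\langle\Phi_\eps,f\rangle\approx\eps^d\sum_{z\in U_\eps}\Phi_\eps(z)f(\eps z)$ and invoking Corollary \ref{cor:cumulants}, the $\ell$-th cumulant is, up to the floor approximation,
\[
\frac{\eps^{d\ell/2}}{\chi^{\ell/2}}\sum_{z_1,\dots,z_\ell\in U_\eps}\kappa\big(\Phi_\eps(z_1),\dots,\Phi_\eps(z_\ell)\big)\prod_{j=1}^\ell f(\eps z_j) \ .
\]
The full-cycle structure in \eqref{eq:cumulants3} combined with the $\abs{\cdot}^{-d}$ decay of every factor forces the cumulant to concentrate on the diagonal: summing over $z_2,\dots,z_\ell$ with $z_1$ fixed yields a finite constant $C_\ell$ (replacing $G_{U_\eps}$ by $G_0$ in the bulk), so the sum behaves like $C_\ell\,\eps^{-d}\int_U f^\ell$ and the $\ell$-th cumulant is of order $\eps^{d(\ell-2)/2}$. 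This vanishes for $\ell\ge 3$; for $\ell=2$ the two-point function and the very definition of $\chi$ give $C_2=\chi$, hence the variance tends to $\eps^d\sum_z f(\eps z)^2\to\|f\|_{L^2(U)}^2$; and the first cumulant vanishes by Wick ordering. Thus all cumulants converge to those of $\mathcal N\big(0,\|f\|_{L^2}^2\big)$, matching Definition \ref{def:WN}.

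For tightness I would exploit that $\Phi_\eps$ lies in the second Wiener chaos of $\Gamma$, so Gaussian hypercontractivity gives $\E[\abs{\langle\Phi_\eps,g\rangle}^p]\lesssim_p(\E[\abs{\langle\Phi_\eps,g\rangle}^2])^{p/2}$ for every $p$ and every test object $g$, reducing all moment bounds to the second moment. The computation above shows the covariance kernel of $\eps^{-d/2}\Phi_\eps/\sqrt\chi$ integrates to $1$ over one variable and collapses to $\delta(y-y')$ in the scaling limit, whence $\E\big[\abs{\langle\eps^{-d/2}\Phi_\eps/\sqrt\chi,\psi^{(i)}_{n,x}\rangle}^2\big]\lesssim\|\psi^{(i)}_{n,x}\|_{L^2}^2=1$ uniformly in $n,x,\eps$, with an additional gain at scales $2^{-n}\lesssim\eps$ coming from the vanishing mean of the wavelets. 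Inserting the hypercontractive bound into the norm of Definition \ref{def:locBes}, the count $\abs{\Lambda_n\cap U}\sim 2^{dn}$ and the scaling $\|\psi^{(i)}_{n,x}\|_{L^p}^p\sim 2^{dn(p/2-1)}$ give $\E\,\|\mathcal W_n(\eps^{-d/2}\Phi_\eps/\sqrt\chi)\|_{L^p}^p\lesssim 2^{dnp/2}$, so that $\sup_\eps\E\,\|\eps^{-d/2}\Phi_\eps/\sqrt\chi\|_{\mathcal B^{\beta,\mathrm{loc}}_{p,p}}^p<\infty$ for every $\beta<-d/2$. Choosing $p$ large, using Lemma \ref{lemma:embedding} together with the standard Besov embedding trading integrability for regularity and the compactness of the inclusion into a slightly rougher local space, I obtain tightness in $\mathcal C^\alpha_{\mathrm{loc}}(U)$ for every $\alpha<-d/2$. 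Tightness plus the identification of the limiting finite-dimensional distributions as white noise yields the claim.

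The hardest part will be the uniform control of the second moment against wavelets across \emph{all} scales, in particular the transition regime $2^{-n}\sim\eps$ where the lattice spacing and the wavelet scale are comparable: there one may neither replace $G_{U_\eps}$ by $G_0$ nor treat the covariance as a clean $\delta$, and one must simultaneously verify that the boundary contributions to $G_{U_\eps}$ do not spoil the estimate. This is also the place where the $\abs{v}^{-d}$ decay of the transfer current must be used in its full strength, and it is the same mechanism underlying both the finiteness of $\chi$ and the diagonal concentration of all higher cumulants.
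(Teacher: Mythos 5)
Your overall architecture matches the paper's: well-definedness of $\chi$ (positivity from the nonvanishing nearest-neighbour term, finiteness from the $\abs{v}^{-d}$ decay of $\nabla_i^{(1)}\nabla_j^{(2)}G_0(0,v)$), convergence of finite-dimensional distributions via vanishing of all cumulants of order $\geq 3$ and identification of the variance with $\chi\|f\|_{L^2}^2$, and tightness in a local Besov/H\"older space; the first two parts are essentially the paper's Lemmas 4.11--4.13. Where you genuinely diverge is tightness: you reduce all $p$-th moments to second moments by hypercontractivity of the second Wiener chaos and then feed a uniform $L^2$ wavelet bound into the Besov norm, whereas the paper never invokes hypercontractivity --- it bounds the $p$-th moments directly from the explicit $k$-point function of Theorem \ref{thm:cumulants} together with the Green's function estimate \eqref{eq:two_durre} and the combinatorial Lemma \ref{lemma:sum_p_point_function}, and then applies the Furlan--Mourrat criterion (Theorem \ref{thm:criterion}), splitting the scales into $2^n\geq R\eps^{-1}$ and $2^n<R\eps^{-1}$ exactly as you anticipate in your last paragraph. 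Your route is shorter and more robust (it only needs the covariance bound of Corollary \ref{cor:covariance_bound}, not the full joint-moment formula); the paper's route is heavier but is deliberately built only on growth bounds for sums of $k$-point functions, which the authors advertise as transferable to non-Gaussian fields where no chaos decomposition is available.

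One quantitative claim in your f.d.d.\ step is too optimistic: for $\ell\geq 3$ the sum of $\prod_{j}\abs{z_j-z_{\sigma(j)}}^{-d}$ over $z_2,\dots,z_\ell$ with $z_1$ fixed is not a finite constant $C_\ell$ uniformly in $\eps$; near-coincidence configurations (points pairing up along the cycle) produce extra growth, which is why the paper's Lemma \ref{lemma:sum_p_point_function} only gives $\mathcal O_D(\eps^{-\ell/2-d+1})$ for the full sum rather than $\mathcal O(\eps^{-d})$. This costs you: the $\ell$-th cumulant is $\mathcal O(\eps^{(d-1)(\ell-2)/2})$ rather than $\mathcal O(\eps^{d(\ell-2)/2})$, but it still vanishes for all $\ell\geq3$ and $d\geq2$, so the conclusion survives. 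You should also record (as the paper does in a separate lemma) that replacing the integral pairing \eqref{def:action} by the Riemann sum $\eps^d\sum_z f(\eps z)\Phi_\eps(z)$ introduces an error vanishing in $L^2$; this is routine but needed to make the cumulant computation legitimate.
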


\begin{remark}
Let us remind the reader that $\mathcal C^{\alpha}_{\mathrm{loc}}(U)$ with $\alpha < -d/2$ are the optimal spaces in which the white noise lives. See for example \citet[Proposition 5.9]{mourratarmstrong}.
\end{remark}


\subsection{Fock space structure}\label{subsec:fock}

Let us discuss in the following the connection to Fock spaces. We start by reminding the reader of the definition of the continuum Gaussian free field (GFF).

\begin{definition}[Continuum Gaussian free field, {\citet[Section 1.5]{berenotes}}]
The continuum Gaussian free field $\overline{\Gamma}$ with $0$-boundary (or Dirichlet) conditions outside $U$ is the unique centered Gaussian process indexed by $\mathcal C_c^\infty(U)$ such that
\[
    \cov{\big(\overline{\Gamma}(f),\overline{\Gamma}(g)\big)} = \int_{U\times U} f(x)g(y)G_U(x,y) \, \d x\d y \ , \quad f,g\in \mathcal C_c^\infty(U)\ , 
\]
where $G_U(\cdot, \cdot)$ was defined in Definition \ref{def:cGFF}.
\end{definition}

We can think of it as an isometry $\overline{\Gamma}: \mathcal H \to L^2(\Omega,\mathbb P)$, for some Hilbert space $\mathcal H$ and some probability space $(\Omega, \mathcal{F},\mathbb P)$. To fix ideas, throughout this Section let us fix $\mathcal H\coloneqq\mathcal H_0^1(U)$, the order one Sobolev space with Dirichlet inner product (see~\citet[Section 1.6]{berenotes}). Note that, even if the GFF is not a proper random variable, we can define its derivative as a Gaussian distributional field.
\begin{definition}[Derivatives of the GFF,~{\citet[pg. 4]{kang2013gaussian}}]
The derivative of $\overline{\Gamma}$ is defined as the Gaussian distributional field $\partial_i \overline{\Gamma}$, $1\le i\le d$, in the following sense:
\[
\big(\partial_i \overline{\Gamma}\big)(f)\coloneqq \overline{\Gamma}\left(\partial_i f\right) \ ,\quad f\in \mathcal C_c^\infty(U) \ .
\]
\end{definition}
There is however another viewpoint that one can take on the GFF and its derivatives, and is that of viewing them as {\em Fock space fields}. This approach will be used to reinterpret the meaning of Theorem~\ref{thm:cumulants}. For the reader's convenience we now recall here some basic facts about Fock spaces and their fields. Our presentation is drawn from~\citet[Section 3.1]{janson} and~\citet[Sec. 1.2-1.4]{kang2013gaussian}.

For $n\geq0$, we denote $\mathcal H^{\odot n}$ as the $n$-th symmetric tensor power of $\mathcal H$; in other words, $\mathcal H^{\odot n}$ is the completion of linear combinations of elements $f_1\odot\cdots\odot f_n$ with respect to the inner product 
\[
    \left\langle f_1\odot\cdots\odot f_n, g_1\odot\cdots\odot g_n \right\rangle = \sum_{\sigma\in S_n} \prod_{i=1}^n \big\langle f_i,g_{\sigma(i)}\big\rangle \ ,\quad f_i, g_i\in \mathcal H \ ,\ 1\le i\le n \ .
\]
The symmetric Fock space over $\mathcal H$ is
\[
    \mathrm{Fock}(\mathcal H) \coloneqq \bigoplus_{n\geq0} \mathcal{H}^{\odot n} \ .
\]


We now introduce elements in $Fock(\mathcal H)$ called {\em Fock space fields}. We call \emph{basic correlation functionals} the formal expressions of the form
\[
    \mathcal X_p = X_{1}(x_{1})\odot\cdots\odot X_{p}(x_{p}) \ ,
\]
for $p\in\N$, $x_{1},\dots,x_{p}\in U$, and $X_{1},\dots,X_{p}$ derivatives of $\overline \Gamma$. The set $\mathcal S(\mathcal X_p)\coloneqq\{x_1,\ldots,x_p\}$ is called the set of {\em nodes} of $\mathcal X_p$. Basic Fock space fields are formal expressions written as products of
derivatives of the Gaussian free field $\overline \Gamma$, for example $1\odot \overline\Gamma$, $\partial \overline\Gamma\odot\overline\Gamma\odot \overline\Gamma $ etc. A general Fock space field $X$ is a linear combination of basic fields. We think of any such $X$ as a map $u\mapsto X(u)$, $u\in U$, where the values $\mathcal X=X(u)$ are correlation functionals with $\mathcal S(\mathcal X)=\{u\}$. Thus Fock space fields are functional-valued functions. Observe that Fock space fields may or may not be distributional random fields, but in any case we can think of them as functions in $U$ whose
values are correlation functionals.

Our goal is to define now tensor products. We will restrict our attention to tensor products over an even number of correlation functionals, even if the definition can be given for an arbitrary number of them. The reason behind this presentation is due to the set-up we will be working with.
\begin{definition}[Tensor products in Fock spaces]
Let $m\in 2\N$. Given a collection of correlation functionals 
\[\mathcal X_j\coloneqq X_{j1}(z_{j1})\odot \cdots \odot X_{j n_j}(z_{j n_j}) \ ,\quad 1\le j\le m
\]
with pairwise disjoint $\mathcal S(\mathcal X_j)$'s, the tensor product of the elements $\mathcal X_1,\ldots,\mathcal X_m$ is defined as
\begin{equation}\label{eq:tensor}
    \mathcal X_1 \cdots \mathcal X_m \coloneqq \sum_\gamma \prod_{\{u,v\}\in E_\gamma} \E\left[X_u(x_u)X_v(x_v)\right] \ ,
\end{equation}
where the sum is taken over Feynman diagrams $\gamma$ with vertices $u$ labeled by functionals $X_{pq}$ in such a way that there are no contractions of vertices in the same $\mathcal S(\mathcal X_p)$. $E_\gamma$ denotes the set of edges of $\gamma$. One extends the definition of tensor product to general correlation functionals by linearity.
\end{definition}

The reader may have noticed that~\eqref{eq:tensor} is simply one version of Wick's theorem. It is indeed this formula that will allow us in Subsection~\ref{subsec:cum} to prove Theorem~\ref{thm:cumulants}, and that enables one to bridge Fock spaces and our cumulants in the following way.  For any $j\in[k]$, $k\in\N$, $i_j\in[d]$, one can define the basic Fock space field $X_{i_j} \coloneqq \partial_{i_j} \overline{\Gamma}$. Introduce the correlation functional 
\begin{equation}\label{eq:Y}
    \mathcal Y_j\coloneqq\sum_{i_j\in \mathcal E}X_{i_j}^{\odot 2}\big(x^{(j)}\big)
\end{equation}
for $x^{(j)}\in U$. We obtain now the statement of the next Lemma.
\begin{lemma}[$k$-point correlation functions as Fock space fields] Under the assumptions of Theorem~\ref{thm:cumulants},
\[
  \lim_{\eps\to 0} \eps^{-dk}\E\left[\prod_{j=1}^k{\Phi_\eps\big(x^{(j)
  }_\eps\big)}\right] = \sum_{\pi\in \Pi([k])}\left(\frac{1}{2} \right)^{|\pi|}\prod_{B\in \pi}\mathcal Y_B\big(x^{(B)}\big)
  \]
  where ${\mathcal Y}_B\big(x^{(B)}\big)\coloneqq 2{\mathcal Y}_1\odot\cdots\odot 2 {\mathcal Y}_j$, $\mathcal S(\mathcal{ Y}_j)=\{x^{(j)}\}$, $j\in B$. Here $|\pi|$ stands for the number of blocks of the partition $\pi$ and the tensor product on the r.h.s. is taken in the sense of \eqref{eq:tensor}.
\end{lemma}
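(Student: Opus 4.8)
The plan is to read the claim off the expansion \eqref{eq:moments_limit} already established in Theorem~\ref{thm:cumulants} and to recognize its right-hand side as the Wick expansion \eqref{eq:tensor} of the announced Fock-space object. Since both \eqref{eq:moments_limit} and the target expression are indexed by the same partition sum $\pi\in\Pi([k])$, I would argue block by block. First I would dispose of the scalar prefactors: writing $\sum_{B\in\pi}(|B|-1) = k-|\pi|$, the product of block weights in \eqref{eq:moments_limit} collapses to $\prod_{B\in\pi}2^{|B|-1} = 2^{k-|\pi|}$, exactly the constant on the right-hand side of the Lemma. It then remains to identify, for each block $B\in\pi$, the inner sum
\[
\sum_{\sigma\in S_\mathrm{cycl}^0(B)}\sum_{\eta:B\to\mathcal E}\prod_{j\in B}\partial_{\eta(j)}^{(1)}\partial_{\eta(\sigma(j))}^{(2)}G_U\big(x^{(j)},x^{(\sigma(j))}\big)
\]
with the value of the correlation functional $\mathcal Y_B\big(x^{(B)}\big)$.

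To do this I would expand $\mathcal Y_B\big(x^{(B)}\big)=\bigodot_{j\in B}\mathcal Y_j\big(x^{(j)}\big)$ via \eqref{eq:Y}: distributing the direction sums produces $\sum_{\eta:B\to\mathcal E}\bigodot_{j\in B}X_{\eta(j)}^{\odot 2}\big(x^{(j)}\big)$, so that each node $x^{(j)}$ carries exactly two copies of $\partial_{\eta(j)}\overline\Gamma\big(x^{(j)}\big)$ and the label $\eta$ appears verbatim. Applying the tensor-product (Wick) formula \eqref{eq:tensor} to these $2|B|$ factors, and using that the only non-vanishing contraction is
\[
\E\big[\partial_i\overline\Gamma(x)\,\partial_j\overline\Gamma(y)\big] = \partial_i^{(1)}\partial_j^{(2)}G_U(x,y),
\]
I would turn each admissible Feynman diagram into a product of Green's-function derivatives. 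The constraint in \eqref{eq:tensor} forbidding contractions of factors sharing a node rules out self-pairings, so every diagram is a $2$-regular graph on $B$, that is, a disjoint union of cycles; the connected diagrams are precisely those made of a single cycle through all of $B$, and these are in bijection with the full fixed-point-free cyclic permutations $\sigma\in S_\mathrm{cycl}^0(B)$. Reading the propagator of the directed edge $j\to\sigma(j)$ as $\partial_{\eta(j)}^{(1)}\partial_{\eta(\sigma(j))}^{(2)}G_U\big(x^{(j)},x^{(\sigma(j))}\big)$ then reproduces the inner sum above.

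The step I expect to be the crux is the careful bookkeeping of combinatorial multiplicities, i.e. checking that no spurious powers of $2$ survive. On the one hand the two copies at each node are interchangeable and the propagator is symmetric under exchanging its legs, so many labelled pairings collapse to the same product; on the other hand a single undirected cycle corresponds to the two mutually inverse permutations $\sigma$ and $\sigma^{-1}$. I would track these symmetry factors together with the normalization implicit in the symmetric power $\odot$, so that the net multiplicity is exactly one per admissible pair $(\sigma,\eta)$. This is where a naive labelled count would overshoot, and getting the cancellation right is the delicate point.

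Finally, I would observe that restricting each $\mathcal Y_B$ to its connected (single-cycle) contributions is consistent with the overall organization of \eqref{eq:moments_limit} as a sum over $\pi$ of products over blocks, which is the moment--cumulant structure of Definition~\ref{def:cum_mult} (the limiting rescaled cumulants being precisely the full-cycle sums of \eqref{eq:limiting_cumulants}): the multiple-cycle configurations one might fear are already accounted for by the finer partitions in $\Pi([k])$. Reassembling the blocks with the prefactor $2^{k-|\pi|}$ then yields $\sum_{\pi\in\Pi([k])}2^{k-|\pi|}\prod_{B\in\pi}\mathcal Y_B\big(x^{(B)}\big)$, as claimed.
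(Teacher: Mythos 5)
Your overall route is the intended one: the paper gives no separate argument for this lemma, treating it as a direct rewriting of \eqref{eq:moments_limit}, and your two main observations --- that $\prod_{B\in\pi}2^{|B|-1}=2^{k-|\pi|}$, and that the inner block sum is the connected (single full cycle) part of the Wick/Feynman evaluation of $\bigodot_{j\in B}\mathcal Y_j$ --- are exactly the content of the identification. Your handling of the disconnected diagrams is also right in spirit: unions of several cycles inside a would-be block are accounted for by the finer partitions, which is the moment--cumulant structure already encoded in \eqref{eq:moments_limit}.

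The gap sits precisely in the step you flag as the crux and then assert rather than verify. You claim the "net multiplicity is exactly one per admissible pair $(\sigma,\eta)$", attributing a cancellation to "the normalization implicit in the symmetric power $\odot$". Under the paper's conventions there is no such normalization: the inner product on $\mathcal H^{\odot n}$ is the unnormalized one, and \eqref{eq:tensor} sums over \emph{all} labelled complete Feynman diagrams. A direct count (two choices of leg at each of the $|B|$ nodes, modulo the two orientations $\sigma,\sigma^{-1}$ of each undirected cycle) shows that a given directed fixed-point-free full cycle $\sigma$ on $B$ is realized by $2^{|B|-1}$ labelled pairings --- this is exactly the constant $c(\sigma)=2^{|B|-1}$ computed in the paper's proof of Theorem~\ref{thm:cumulants}, and it is exactly what the explicit prefactor $2^{k-|\pi|}=\prod_{B\in\pi}2^{|B|-1}$ in the lemma is there to supply. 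If you both keep that prefactor and let the Wick evaluation of $\bigodot_{j\in B}\mathcal Y_j$ generate its own factor $2^{|B|-1}$ per block, you double-count and land on $4^{k-|\pi|}$ in place of $2^{k-|\pi|}$ (already visible at $k=2$, where $\mathcal Y_1\mathcal Y_2$ evaluated by \eqref{eq:tensor} equals $2\sum_{i,j}\big(\partial_i^{(1)}\partial_j^{(2)}G_U\big)^2$, while the two-point function is $2\sum_{i,j}\big(\partial_i^{(1)}\partial_j^{(2)}G_U\big)^2$ with no further factor of $2$ available). The repair is to state explicitly that $\mathcal Y_B\big(x^{(B)}\big)$ is to be read as the multiplicity-one sum $\sum_{\sigma\in S_\mathrm{cycl}^0(B)}\sum_{\eta:B\to\mathcal E}\prod_{j\in B}\partial_{\eta(j)}^{(1)}\partial_{\eta(\sigma(j))}^{(2)}G_U\big(x^{(j)},x^{(\sigma(j))}\big)$, one term per contraction pattern $(\sigma,\eta)$, with the entire labelled-diagram multiplicity carried by the factor $2^{k-|\pi|}$; the cancellation you hope to get from the symmetric power is not there to be had.
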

The Fock space structure is more evident from the Gaussian perspective of the DGFF, but~\eqref{eq:equal_cumulants} together with D\"urre's theorem entail a corollary which we would like to highlight. We remind the reader of the definition of the constant $C$ in~\eqref{eq:def_C}.
\begin{corollary}[Height-one field $k$-point functions, $d=2$]\label{cor:cum_in_height}
With the same notation of Theorem~\ref{thm:cumulants} one has in $d=2$ that
\[
    \lim_{\eps\to 0} \eps^{-2k} \mathbb E\left[\prod_{j=1}^k{\left(h_\eps\big(x^{(j)}_\eps\big)-\mathbb E\left[h_\eps\big(x^{(j)}_\eps\big)\right]\right)}\right] =\sum_{\pi\in \Pi([k])}\left(- \frac{1}{2}\right)^{|\pi|}\prod_{B\in \pi}\widetilde{\mathcal Y}_B\big(x^{(B)}\big)
\]
where $\widetilde{\mathcal Y}_B\big(x^{(B)}\big)\coloneqq\widetilde{\mathcal Y}_1\odot\cdots\odot\widetilde{\mathcal Y}_j,\,\,\mathcal S(\widetilde{\mathcal Y}_j)=\{x^{(j)}\}$ and $\widetilde{\mathcal Y}_j\coloneqq C\,\,\mathcal Y_j$, $j\in B$. As before, $|\pi|$ stands for the number of blocks of the partition $\pi$.
\end{corollary}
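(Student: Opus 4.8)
The plan is to derive the corollary purely combinatorially from three facts already at our disposal: the moment-cumulant relation of Definition~\ref{def:cum_mult}, the cumulant comparison~\eqref{eq:equal_cumulants} between the height-one field $h_\eps$ and $\Phi_\eps$ (itself obtained from \citet[Theorem 2]{durre} together with our Corollary~\ref{cor:cumulants}), and the Fock-space identification of the limiting cumulants of $\Phi_\eps$ furnished by the preceding Lemma, namely $\lim_{\eps\to0}\eps^{-2|B|}\kappa(\Phi_\eps: j\in B) = 2^{|B|-1}\mathcal Y_B(x^{(B)})$. No new analytic input is needed; everything reduces to finite sums and products, so the only genuine work is the bookkeeping of combinatorial constants.

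First I would apply the moment-cumulant formula of Definition~\ref{def:cum_mult} to the centered variables $h_\eps(x^{(j)}_\eps)-\E[h_\eps(x^{(j)}_\eps)]$, writing the $k$-point function as $\sum_{\pi\in\Pi([k])}\prod_{B\in\pi}\kappa\big(h_\eps-\E h_\eps: j\in B\big)$. Centering has two consequences I would record: every singleton block carries a vanishing first cumulant, so all partitions containing a singleton drop out, while for $|B|\ge2$ the joint cumulant is invariant under the deterministic shift and hence equals $\kappa(h_\eps: j\in B)$. Multiplying by $\eps^{-2k}=\prod_{B\in\pi}\eps^{-2|B|}$ and using that the partition sum is finite, I would pass the limit inside each block factor, reducing the problem to evaluating $\lim_{\eps\to0}\eps^{-2|B|}\kappa(h_\eps: j\in B)$ for each block of size at least two.

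At this point I would invoke~\eqref{eq:equal_cumulants} to trade each limiting height-one cumulant for the corresponding limiting $\Phi_\eps$-cumulant up to the prescribed power of $C$, and then the Fock-space Lemma to rewrite the latter as $2^{|B|-1}\mathcal Y_B$. Recalling $\widetilde{\mathcal Y}_j = (C/2)\mathcal Y_j$, so that $\widetilde{\mathcal Y}_B = (C/2)^{|B|}\mathcal Y_B$, the per-block constant should collapse to $(-2)(C/2)^{|B|}$, whence the product over the $|\pi|$ blocks contributes the global factor $(-2)^{|\pi|}$ and the fields assemble into $\prod_{B\in\pi}\widetilde{\mathcal Y}_B$. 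Finally, since the connected correlation attached to a singleton vanishes ($S^0_\mathrm{cycl}$ of a one-element set is empty), the singleton partitions contribute nothing to $\sum_{\pi}(-2)^{|\pi|}\prod_{B}\widetilde{\mathcal Y}_B$ either, so I may harmlessly extend the singleton-free sum produced in the previous step to the full sum over $\Pi([k])$, matching the stated right-hand side.

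The main obstacle I anticipate is precisely this constant-and-sign bookkeeping in the last step: one must check that the powers of $2$ coming from $2^{|B|-1}$, the normalization $(C/2)^{|B|}$ hidden in $\widetilde{\mathcal Y}_B$, and the constant supplied by~\eqref{eq:equal_cumulants} combine block-by-block into exactly $(-2)(C/2)^{|B|}$, and in particular that the factor $-1$ per block is reproduced. A secondary point requiring care is the compatibility of the two mechanisms by which singletons disappear --- vanishing of centered first cumulants on the height-one side and emptiness of $S^0_\mathrm{cycl}(\{j\})$ on the Fock-space side --- which is what legitimizes rewriting the singleton-free sum as a sum over all of $\Pi([k])$.
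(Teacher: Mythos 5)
Your proposal is correct and follows exactly the route the paper intends: the corollary is stated as an immediate consequence of the moment--cumulant relation, the cumulant identity~\eqref{eq:equal_cumulants} with D\"urre's theorem, and the Fock-space Lemma identifying the limiting block cumulants of $\Phi_\eps$ with $2^{|B|-1}\mathcal Y_B$, and your bookkeeping of the per-block constants $(-2)(C/2)^{|B|}$ and of the two mechanisms killing singleton blocks (vanishing centered first cumulants, emptiness of $S^0_{\mathrm{cycl}}$ on one-element sets) is precisely what makes that assertion rigorous. The only piece you cannot verify from first principles is the exact constant supplied by~\eqref{eq:equal_cumulants}, which you correctly isolate as the crux rather than glossing over it.
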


\begin{remark}
Mind that our Green's functions differ from those of \cite{durre} by a factor of $2d$ since in their definitions we use the normalized Laplacian, whereas D\"urre uses the unnormalized one. This has to be accounted for when comparing the corresponding results in both papers.
\end{remark}

\section{Proofs}\label{sec:proofs}

\subsection{Previous results from literature}

Let us now expose some important results that we will refer to throughout the proofs. They refer to partially known results and partially consist of straightforward generalizations of previous results.

Our computations will rely on the fact that the distribution of the gradient field $\nabla_i \Gamma$, \mbox{$i\in [d]$}, is well-known. The following result is quoted from \citet[Lemma 3.6]{funaki}.

\begin{lemma}\label{lem:grad_gff_cov}
Let \mbox{$\Lambda\subset \Z^d$} be finite, and let $(\Gamma_x)_{x\in \Lambda}$ be a $0$-boundary conditions DGFF on $\Lambda$ (see Definition \ref{def:dGFF}). Then
\[
\begin{dcases}
\E\left[\nabla_i \Gamma(x)\right]=0 \ &\text{if} \ x\in\Lambda\ , \ i\in[d] \ , \\
    \E\left[\nabla_i \Gamma(x)\nabla_j \Gamma(y)\right]=\nabla^{(1)}_i\nabla^{(2)}_j G_\Lambda(x,y) \ &\text{if} \ x,y\in\Lambda \ ,\ i,j\in[d] \ .
\end{dcases}
\]
\end{lemma}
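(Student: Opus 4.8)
The plan is to establish both identities by a direct second-moment computation, using only the linearity of expectation together with the defining covariance $\E[\Gamma(x)\Gamma(y)] = G_\Lambda(x,y)$ of the DGFF from Definition~\ref{def:dGFF} and the definition $\nabla_i\Gamma(x) = \Gamma(x+e_i) - \Gamma(x)$ of the gradient field. The first identity requires no work beyond recalling that $\Gamma$ is centered: since $\E[\Gamma(z)] = 0$ for every $z$, linearity immediately gives $\E[\nabla_i\Gamma(x)] = \E[\Gamma(x+e_i)] - \E[\Gamma(x)] = 0$ for all $x\in\Lambda$ and $i\in[d]$.

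For the covariance, I would expand the product of two gradients by bilinearity and substitute the DGFF covariance term by term:
\begin{align*}
\E[\nabla_i\Gamma(x)\nabla_j\Gamma(y)]
&= \E\big[(\Gamma(x+e_i)-\Gamma(x))(\Gamma(y+e_j)-\Gamma(y))\big] \\
&= G_\Lambda(x+e_i,y+e_j) - G_\Lambda(x+e_i,y) - G_\Lambda(x,y+e_j) + G_\Lambda(x,y).
\end{align*}
The remaining step is to recognize the right-hand side as a second discrete difference of $G_\Lambda$. Applying the operators of the statement in turn, $\nabla_j^{(2)}G_\Lambda(x,y) = G_\Lambda(x,y+e_j) - G_\Lambda(x,y)$, and then differencing this in the first argument, produces precisely the same four-term alternating sum. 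Hence $\E[\nabla_i\Gamma(x)\nabla_j\Gamma(y)] = \nabla_i^{(1)}\nabla_j^{(2)}G_\Lambda(x,y)$, which is the claim.

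The one place that warrants attention — and the only conceivable obstacle — is the behaviour at the boundary of $\Lambda$: although $x,y\in\Lambda$, the shifted arguments $x+e_i$ and $y+e_j$ may fall outside $\Lambda$. I would resolve this by applying the $0$-boundary convention consistently on both sides. The DGFF is extended by $\Gamma(z)=0$ for $z\notin\Lambda$, while by Definition~\ref{def:dGF} the Green's function obeys $G_\Lambda(z,\cdot)=0$ whenever $z\notin\Lambda$. Consequently, any term in which an argument leaves $\Lambda$ vanishes on both sides of the identity at once, so the four-term expansion remains valid verbatim and no case distinction is needed. This bookkeeping aside, the proof is entirely elementary: the covariance of discrete differences is the double discrete difference of the covariance.
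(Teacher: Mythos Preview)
Your argument is correct. The paper does not actually prove this lemma; it merely quotes it from \citet[Lemma~3.6]{funaki}, so there is no in-paper proof to compare against. Your direct second-moment expansion, together with the observation that the $0$-boundary conventions for $\Gamma$ and $G_\Lambda$ make any out-of-$\Lambda$ term vanish simultaneously on both sides, is exactly the standard elementary verification and needs nothing further.
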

Consequently, we can directly link the gradient DGFF to so-called  transfer current matrix $T(\cdot,\cdot)$ by
\begin{equation}
    T(e,f) = G_\Lambda(e^-,f^-)-G_\Lambda(e^+,f^-)-G_\Lambda(e^-,f^+)+G_\Lambda(e^+,f^+)
\end{equation}
where $e,\,f$ are oriented edges of $\Lambda$ (see \citet[Section 2]{kassel-wu}). Equivalently we can write
\begin{equation}\label{eq:transfer_cov}
    T(e,f) = \nabla_e\nabla_f G_\Lambda(e^-,f^-) \ .
\end{equation}

From Lemma~\ref{lem:grad_gff_cov}, it is clear that we need to control the behaviour of double derivatives of discrete Green's function in the limit $\varepsilon\to 0$. In order to find the limiting joint moments of the point-wise field $\Phi_\eps(x)$ we will need the following result about the convergence of the discrete difference of the Green's function on $U_\eps$ (see Definition \ref{def:dGF}) to the double derivative of the continuum Green's function $G_U(\cdot,\cdot)$ on a set $U$ (see Equation \eqref{def:cGF}). This result follows from Theorem 1 of \citet{kassel-wu}.

\begin{lemma}[\textit{Convergence of the Green's function differences}]\label{lemma:conv_green_diff}
    Let $v$, $w$ be points in the set $U$, with $v \neq w$. Then for all $a,b \in \mathcal E$,
    \[
       \lim_{\eps\to 0} \eps^{-d}\, \nabla_{a}^{(1)}\nabla_{b}^{(2)}G_{U_\eps}\left(\floor{v/\eps},\floor{w/\eps}\right) = \partial_{a}^{(1)}\partial_{b}^{(2)}G_U(v,w) \ .
    \]
\end{lemma}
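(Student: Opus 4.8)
The plan is to separate $G_{U_\eps}$ into a whole-space (free) part and a harmonic boundary correction, establish the scaled convergence of each, and add them; alternatively one may read the statement off directly from Theorem~1 of \citet{kassel-wu} via \eqref{eq:transfer_cov}. Concretely, I would write $G_{U_\eps}(x,y)=G_0(x,y)-H_{U_\eps}(x,y)$, where $H_{U_\eps}(x,y)=\mathbb{E}_x\big[G_0(S_\tau,y)\big]$ is the discrete-harmonic (in $x$) extension of the boundary trace of $G_0(\cdot,y)$, with $S$ the simple random walk on $U_\eps$ and $\tau$ its first exit time. The matching continuum splitting is $G_U(v,w)=\mathfrak g(v-w)-h_U(v,w)$, with $\mathfrak g$ the fundamental solution of $-\Delta$ and $h_U(\cdot,w)$ harmonic with boundary data $\mathfrak g(\cdot-w)$. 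Both decompositions are legitimate in every $d\ge 2$ because only the Laplacian-inverse property and the boundary values enter; in $d=2$ the additive (and $\eps$-divergent) constant of the potential kernel $G_0$ is irrelevant, being annihilated by the differences $\nabla_a^{(1)}\nabla_b^{(2)}$. It then suffices to prove the scaled limit separately for $G_0$ and for $H_{U_\eps}$ and to use $\partial_a^{(1)}\partial_b^{(2)}G_U=\partial_a^{(1)}\partial_b^{(2)}\mathfrak g-\partial_a^{(1)}\partial_b^{(2)}h_U$.

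For the free part, $v\neq w$ forces the arguments $\floor{v/\eps},\floor{w/\eps}$ to lattice distance $\gtrsim |v-w|/\eps\to\infty$, placing us in the regime of the asymptotic expansion of the lattice Green's function and its gradients (\citet{lawlerlimic}, or Theorem~1 of \citet{kassel-wu}). By translation invariance $G_0(\floor{v/\eps},\floor{w/\eps})=G_0(0,z_\eps)$ with $z_\eps=\floor{w/\eps}-\floor{v/\eps}$, and the mixed second difference of the smooth profile that $G_0(0,\cdot)$ approximates carries exactly one factor $\eps^d$ relative to the $|v-w|^{2-d}$ size of $G_0$, so that $\eps^{-d}\nabla_a^{(1)}\nabla_b^{(2)}G_0(\floor{v/\eps},\floor{w/\eps})\to\partial_a^{(1)}\partial_b^{(2)}\mathfrak g(v-w)$. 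The remainder in the expansion is $o(\eps^d)$ uniformly for $|v-w|$ bounded below, so the $\eps^{-d}$ rescaling produces a genuine limit. This is precisely Kassel--Wu's convergence statement for the transfer current, which by \eqref{eq:transfer_cov} equals $\nabla_a^{(1)}\nabla_b^{(2)}G$ between the edges issuing from $\floor{v/\eps}$ and $\floor{w/\eps}$.

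The harmonic part is the delicate step and the main obstacle. Here $H_{U_\eps}(\cdot,\floor{w/\eps})$ is discrete-harmonic in $U_\eps$ with boundary data of order $\eps^{d-2}$ (up to the additive constant $c_\eps$ removed above in $d=2$), and I must show $\eps^{-d}\nabla_a^{(1)}\nabla_b^{(2)}H_{U_\eps}\to\partial_a^{(1)}\partial_b^{(2)}h_U(v,w)$. Since $v,w$ lie in the open set $U$, they sit at macroscopic --- hence $\gtrsim 1/\eps$ lattice --- distance from $\partial U_\eps$, so interior gradient estimates for discrete-harmonic functions (applied to $H_{U_\eps}-c_\eps$) give $\big|\nabla_a^{(1)}\nabla_b^{(2)}H_{U_\eps}\big|\lesssim\eps^2\sup|H_{U_\eps}-c_\eps|\lesssim\eps^d$, matching the scaling. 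To identify the limit I would combine convergence of the discrete harmonic measure (equivalently the discrete Poisson kernel) on $U_\eps$ to its continuum counterpart, yielding $\eps^{2-d}H_{U_\eps}(\floor{\cdot/\eps},\floor{w/\eps})\to h_U(\cdot,w)$ locally uniformly, with discrete interior regularity --- equicontinuity of the rescaled discrete derivatives of harmonic functions --- to upgrade this to convergence of the rescaled second differences. Making these two ingredients quantitative and uniform (the boundary trace $\mathfrak g(\cdot-w)$ is singular at $w$, but is probed only on $\partial U$, at positive distance from $w$, hence smooth there) is the technical heart; it is exactly this package that Theorem~1 of \citet{kassel-wu} supplies in one step, so in the write-up the proof reduces to checking their hypotheses (smoothness of $\partial U$, the $\tfrac{1}{2d}$ normalization of $\Delta_V$, and the convention for $G_U$) and reading off the $\eps^{-d}$ scaling. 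Adding the two limits gives $\eps^{-d}\nabla_a^{(1)}\nabla_b^{(2)}G_{U_\eps}(\floor{v/\eps},\floor{w/\eps})\to\partial_a^{(1)}\partial_b^{(2)}G_U(v,w)$, as claimed.
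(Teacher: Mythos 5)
Your proposal is correct and lands exactly where the paper does: the paper offers no independent proof of this lemma, stating only that it follows from Theorem~1 of \citet{kassel-wu}, i.e.\ from the convergence of the transfer current, which equals $\nabla_a^{(1)}\nabla_b^{(2)}G_{U_\eps}$ by \eqref{eq:transfer_cov}. Your additional sketch (splitting off the full-lattice Green's function and treating the free part via the asymptotic expansion of $G_0$ and the boundary correction via harmonic-measure convergence plus interior estimates for discrete harmonic functions) is a sound outline of how that cited theorem is itself proved, so it supplies detail the paper omits without taking a genuinely different route.
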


The next lemma is a generalization of \citet[Lemma 31]{durrethesis} for general dimensions $d\geq 2$. The proof is straightforward and will be omitted.
It provides an error estimate when replacing the double difference of $G_{U_\eps}(\cdot,\cdot)$ on the finite set by that of $G_0(\cdot,\cdot)$ defined on the whole lattice. 

\begin{lemma}\label{lemma:bound_2_points_durre}
    Let $D\subset U$ be such that the distance between $D$ and $U$ is non-vanishing, that is, $\dist{(D,\partial U)} \coloneqq \inf_{(x,y)\in D\times\partial U}\abs{x-y} > 0$. There exist $c_D > 0$ and $\eps_D > 0$ such that, for all $\eps \in (0,\eps_D]$, for all $v,w \in D_\eps \coloneqq D/\eps \cap \Z^d$ and $i,j\in[d]$,
    \begin{equation}\label{eq:one_durre}
        \abs{\nabla_i^{(1)}\nabla_j^{(2)} G_{U_\eps}(v,w) - \nabla_i^{(1)}\nabla_j^{(2)} G_0(v,w)} \leq c_D \, \eps^d \ ,
    \end{equation}
    and also
    \begin{equation}\label{eq:two_durre}
      \abs{\nabla_i^{(1)}\nabla_j^{(2)} G_{U_\eps}(v,w)} \leq c_D \cdot
        \begin{dcases}
            \ \abs{v-w}^{-d} \ &\text{if} \ v\neq w \ ,\\
            \hfil 1 \ &\text{if} \ v = w \ .
        \end{dcases}
    \end{equation}
\end{lemma}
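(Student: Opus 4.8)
The plan is to exploit that, for fixed $w$, the difference $F_w(\cdot)\coloneqq G_{U_\eps}(\cdot,w)-G_0(\cdot,w)$ is discrete harmonic on \emph{all} of $U_\eps$: the point singularities at $w$ cancel since both kernels invert $-\Delta$, and away from $w$ both are harmonic. The strategy is then to combine the decay of \emph{first} differences of the full-lattice kernel with a gradient estimate for discrete harmonic functions, applying the two gradients in \eqref{eq:one_durre} one at a time. First I would record the probabilistic representation: since $U_\eps$ is finite, simple random walk exits it almost surely, and because $F_w$ is harmonic in $U_\eps$ with boundary values $F_w(z)=-G_0(z,w)$ for $z\notin U_\eps$, optional stopping gives
\[
    F_w(v)=G_{U_\eps}(v,w)-G_0(v,w)=-\E_v\!\left[G_0(X_\tau,w)\right],
\]
where $\tau$ is the exit time of the walk $X$ from $U_\eps$ and $X_\tau\notin U_\eps$. (When $d=2$ one reads $G_0$ as the potential kernel and works throughout with its finite differences, which is all that enters.)

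To establish \eqref{eq:one_durre} I would take the $w$--gradient first. Applying $\nabla_j^{(2)}$ to the representation yields
\[
    \tilde u_j(v)\coloneqq \nabla_j^{(2)}F_w(v)=-\E_v\!\left[\nabla_j^{(2)}G_0(X_\tau,w)\right],
\]
and $\tilde u_j(\cdot)$ is again harmonic in $U_\eps$, being a difference of two such harmonic functions. Now, since $w\in D_\eps$ and $X_\tau\notin U_\eps$, the hypothesis $\dist{(D,\partial U)}>0$ forces $\abs{X_\tau-w}\ge \dist{(D,\partial U)}\,\eps^{-1}$; together with the standard decay $\abs{\nabla_j^{(2)}G_0(z,w)}\lesssim \abs{z-w}^{-(d-1)}$ of a first difference of the lattice Green's function (potential kernel when $d=2$), this gives the uniform bound $\sup_{U_\eps}\abs{\tilde u_j}\lesssim \eps^{d-1}$. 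Finally I would invoke the gradient estimate for discrete harmonic functions (see \citet{lawlerlimic}): for $v\in D_\eps$ the ball $B(v,c\eps^{-1})$ lies inside $U_\eps$ once $c$ is small relative to $\dist{(D,\partial U)}$ and $\eps\le\eps_D$, so
\[
    \bigl|\nabla_i^{(1)}\nabla_j^{(2)}\bigl(G_{U_\eps}-G_0\bigr)(v,w)\bigr|=\abs{\nabla_i^{(1)}\tilde u_j(v)}\lesssim \frac{\eps}{c}\,\sup_{B(v,c\eps^{-1})}\abs{\tilde u_j}\lesssim \eps\cdot\eps^{d-1}=\eps^d,
\]
which is exactly \eqref{eq:one_durre} with a constant depending on $D$ only through $\dist{(D,\partial U)}$.

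Estimate \eqref{eq:two_durre} then follows from \eqref{eq:one_durre} by the triangle inequality, using the behaviour of the \emph{second} difference of the full-lattice kernel. By translation invariance $\nabla_i^{(1)}\nabla_j^{(2)}G_0(v,w)$ depends only on $v-w$, equals a fixed finite constant when $v=w$, and decays like $\abs{v-w}^{-d}$ when $v\neq w$ (one order faster than the first difference; see \citet{lawlerlimic,spitzer}). Since $D$ is bounded we have $\abs{v-w}\lesssim \eps^{-1}$, so the error term from \eqref{eq:one_durre} satisfies $\eps^d\lesssim \abs{v-w}^{-d}$ and is absorbed into the main term, while the case $v=w$ is immediate.

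The computations are routine, but the step requiring care is the harmonic gradient estimate: one must ensure that a ball of radius proportional to $\eps^{-1}$ around every $v\in D_\eps$ sits inside $U_\eps$, so that the bound applies with the gain of a full factor $\eps$, and that all constants depend on $D$ only through $\dist{(D,\partial U)}$ and the size of $D$, uniformly in $v,w,i,j$. The only genuinely dimension-sensitive bookkeeping is in $d=2$, where $G_0$ must be read as the potential kernel and each of its appearances replaced by the well-defined finite differences; the orders of decay of those differences are unchanged, so the argument carries over verbatim.
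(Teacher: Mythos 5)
The paper itself does not prove this lemma: it declares it a straightforward generalization of D\"urre's Lemma~31 (stated there for $d=2$) and omits the argument, so there is no in-paper proof to compare against. Your proposal supplies exactly the proof one would expect, and it is correct. The key steps all check out: $F_w=G_{U_\eps}(\cdot,w)-G_0(\cdot,w)$ is discrete harmonic on all of $U_\eps$ because the two delta sources cancel; optional stopping gives the exit-distribution representation; and differencing in $w$ \emph{before} invoking any interior estimate is the right move, since it reduces everything to first differences of $G_0$, which decay like $\abs{z-w}^{1-d}$ uniformly in $d\geq 2$, whereas the undifferenced potential kernel in $d=2$ is unbounded and a naive ``two gradient estimates applied to $F_w$ itself'' route would break down there. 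The hypothesis $\dist{(D,\partial U)}>0$ is used exactly where it must be: once to force $\abs{X_\tau-w}\geq \dist{(D,\partial U)}\,\eps^{-1}$ (valid because $X_\tau\notin U_\eps$ means $\eps X_\tau\notin U$ while $\eps w\in D$), and once to fit a discrete ball of radius proportional to $\eps^{-1}$ around each $v\in D_\eps$ inside $U_\eps$ so that the Lawler--Limic gradient estimate yields the extra factor of $\eps$. Finally, \eqref{eq:two_durre} does follow from \eqref{eq:one_durre} together with Lemma~\ref{lemma:lemma_29_durre}, since $\abs{v-w}\leq \mathrm{diam}(D)\,\eps^{-1}$ makes the $\eps^d$ error absorbable into $\abs{v-w}^{-d}$. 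Two small points deserve an explicit sentence in a final write-up: (i) one needs $w+e_j\in U_\eps$ for $\eps\leq\eps_D$ so that $F_{w+e_j}$, and hence $\tilde u_j$, is genuinely harmonic on $U_\eps$ (this is where $\eps_D$ enters); and (ii) the sign convention $G_0=-a$ for the potential kernel in $d=2$, which makes the representation $F_w(v)=-\E_v\left[G_0(X_\tau,w)\right]$ uniform across dimensions. Neither is a gap.
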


An immediate consequence of \eqref{eq:two_durre} and the expression \eqref{eq:cumulants} in Theorem \ref{thm:cumulants} for two points gives us the following bound on the covariance of the field:

\begin{corollary}\label{cor:covariance_bound}
    Let $D$, $v$ and $w$ be as in Lemma \ref{lemma:bound_2_points_durre}. Then
    \begin{equation}\label{eq:bound_double_diff}
        \E\left[\Phi_\eps(v)\Phi_\eps(w)\right] \leq c_D \cdot
        \begin{dcases}
            \abs{v-w}^{-2d} \ &\text{if} \ v\neq w \ ,\\
            1 \ &\text{if} \ v = w \ .
        \end{dcases}
    \end{equation}
\end{corollary}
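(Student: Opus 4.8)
The plan is to combine the explicit two-point function recorded in the remark following Theorem~\ref{thm:cumulants} with the pointwise derivative bound~\eqref{eq:two_durre}; the result is then essentially immediate. Since the points $v,w$ in the statement already lie in $D_\eps = D/\eps\cap\Z^d$, they are lattice points, so the two-point identity applies directly and gives
\[
    \E\left[\Phi_\eps(v)\Phi_\eps(w)\right] = 2\sum_{i,j\in[d]}\left(\nabla_i^{(1)}\nabla_j^{(2)}G_{U_\eps}(v,w)\right)^2 \ .
\]
This is the only place where Theorem~\ref{thm:cumulants} enters, and it reduces the claim to controlling each squared double difference of the discrete Green's function.

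Next I would insert the bound~\eqref{eq:two_durre} from Lemma~\ref{lemma:bound_2_points_durre} term by term. For each index pair $(i,j)\in[d]^2$ that lemma yields $\abs{\nabla_i^{(1)}\nabla_j^{(2)}G_{U_\eps}(v,w)} \le c_D\,\abs{v-w}^{-d}$ when $v\neq w$ and $\le c_D$ when $v=w$, uniformly for $\eps\in(0,\eps_D]$. Squaring and summing over the $d^2$ index pairs produces a prefactor $2d^2c_D^2$ multiplying $\abs{v-w}^{-2d}$ (respectively $1$). Because the dimension $d$ is fixed throughout, this new constant depends only on $D$, so after relabeling $2d^2c_D^2$ as a fresh $c_D$ one obtains exactly~\eqref{eq:bound_double_diff}.

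I do not expect any genuine obstacle here: all the analytic content is carried by Lemma~\ref{lemma:bound_2_points_durre}, and the corollary is a one-line algebraic consequence of it. The only points requiring a word of care are the (standard) abuse of notation in renaming the constant, and noting that the uniformity in $\eps\in(0,\eps_D]$ of the final bound is inherited directly from the corresponding uniformity already present in~\eqref{eq:two_durre}.
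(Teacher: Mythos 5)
Your proposal is correct and matches the paper's own (one-line) justification exactly: the corollary is stated there as an immediate consequence of the two-point identity $\E[\Phi_\eps(v)\Phi_\eps(w)] = 2\sum_{i,j\in[d]}\bigl(\nabla_i^{(1)}\nabla_j^{(2)}G_{U_\eps}(v,w)\bigr)^2$ together with the bound \eqref{eq:two_durre}, which is precisely your argument. The only addition you make is the explicit bookkeeping of the constant $2d^2c_D^2$, which is fine.
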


On the other hand, we will also make use of a straightforward extension of~\citet[Corollary 4.4.5]{lawlerlimic} for $d=2$ and \citet[Corollary 4.3.3]{lawlerlimic} for $d\geq 3$, yielding the following Lemma.

\begin{lemma}[Asymptotic expansion of the Green's function differences]\label{lemma:lemma_29_durre}
As $|v|\to+\infty$, for all $i,\,j\in[d]$
\[
    \abs{\nabla_i^{(1)}\nabla_j^{(2)} G_{0}(0,v)}=\mathcal O\big(|v|^{-d}\big) \ .
\]
\end{lemma}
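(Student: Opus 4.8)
The plan is to reduce the double discrete difference to a mixed second-order difference of the single-variable potential kernel (resp.\ lattice Green's function) $a(w)\coloneqq G_0(0,w)$ and then exploit its known asymptotic expansion. By translation invariance of $G_0$ together with the definitions of the discrete gradients, setting $x=0$, $y=v$ gives
\[
  \nabla_i^{(1)}\nabla_j^{(2)} G_0(0,v) = a(v+e_j-e_i) - a(v+e_j) - a(v-e_i) + a(v) \ ,
\]
which is the mixed second difference of $a$ in the directions $-e_i$ (first slot) and $e_j$ (second slot). The content of the statement is that this second difference is one power of $|v|$ smaller than a single difference, which is only $\mathcal O\big(|v|^{1-d}\big)$; the gain of one power is produced by the cancellation built into the second difference.

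First I would record the relevant pointwise asymptotics. By \citet[Theorem 4.3.1]{lawlerlimic} for $d\ge 3$ and \citet[Theorem 4.4.4]{lawlerlimic} for $d=2$ one has $a(w)=u(w)+r(w)$, where $u(w)=C_d\,|w|^{2-d}$ (resp.\ $u(w)=\frac{2}{\pi}\log|w|$ when $d=2$, up to an additive constant that cancels in any difference) is smooth away from the origin, and the remainder satisfies $\abs{r(w)}\lesssim |w|^{-d}$ as $|w|\to\infty$. The crucial feature, which is exactly what \citet[Corollary 4.3.3 and Corollary 4.4.5]{lawlerlimic} encode, is that the reflection symmetry of the simple random walk annihilates the would-be $|w|^{1-d}$ correction, so the remainder is genuinely of order $|w|^{-d}$.

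Next I would treat the two pieces separately. For the remainder $r$, the four evaluation points $v,\,v-e_i,\,v+e_j,\,v+e_j-e_i$ all lie within Euclidean distance $\sqrt 2$ of $v$, so each obeys $\abs{r(\cdot)}\lesssim (|v|-\sqrt 2)^{-d}\lesssim |v|^{-d}$ for $|v|$ large; the triangle inequality then bounds the second difference of $r$ by $\mathcal O\big(|v|^{-d}\big)$. For the smooth leading part $u$ I would write the mixed second difference as an iterated integral of its second derivative,
\[
  u(v+e_j-e_i) - u(v+e_j) - u(v-e_i) + u(v) = -\int_0^1\!\int_0^1 \partial_i\partial_j u\big(v-s\,e_i+t\,e_j\big)\,\d s\,\d t \ ,
\]
which is valid because the unit square of integration stays away from the origin once $|v|$ is large. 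Since $\partial_i\partial_j u(\xi)=\mathcal O\big(|\xi|^{-d}\big)$ for both choices of $u$ (the second derivatives of $|\cdot|^{2-d}$ and of $\log|\cdot|$ both decay like $|\cdot|^{-d}$), and $|\xi|\ge |v|-\sqrt 2$ on that square, the integral is $\mathcal O\big(|v|^{-d}\big)$. Adding the two contributions yields $\abs{\nabla_i^{(1)}\nabla_j^{(2)} G_0(0,v)}=\mathcal O\big(|v|^{-d}\big)$.

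I expect the only delicate point to be the justification that the remainder in the Green's function expansion is $\mathcal O\big(|w|^{-d}\big)$ rather than merely $\mathcal O\big(|w|^{1-d}\big)$: without this improvement the triangle-inequality bound on $r$ would be off by one power and the argument would collapse. This is precisely where the cited corollaries of \citet{lawlerlimic} are essential, since they rely on the reflection symmetries of the walk to cancel the odd-order term in the local central limit theorem expansion; the ``straightforward extension'' to all $d\ge 2$ amounts to checking that these symmetry cancellations persist in every dimension, the smooth part being handled uniformly by the discrete Taylor estimate above.
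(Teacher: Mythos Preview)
The paper does not actually prove this lemma: it states it as a ``straightforward extension'' of \citet[Corollary 4.3.3]{lawlerlimic} for $d\ge 3$ and \citet[Corollary 4.4.5]{lawlerlimic} for $d=2$, which already bound second-order discrete differences of $G_0$ by $|v|^{-d}$, and leaves it at that. Your argument is correct and supplies precisely what the paper omits. You reduce to a mixed second difference of $a(\cdot)=G_0(0,\cdot)$, decompose $a=u+r$ via Theorems 4.3.1/4.4.4 of the same reference, control the smooth part through the integral representation of the second difference and the $|\cdot|^{-d}$ decay of $\partial_i\partial_j u$, and dispose of the remainder with the pointwise bound $r=\mathcal O(|\cdot|^{-d})$ and the triangle inequality. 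This is in fact how the cited corollaries are proved in Lawler--Limic (decompose, differentiate the continuum part, bound the error termwise), so your proof is essentially an explicit unpacking of the citation rather than a genuinely different route; what it buys you is a self-contained argument that makes visible where the reflection-symmetry improvement $r=\mathcal O(|\cdot|^{-d})$ (rather than $\mathcal O(|\cdot|^{1-d})$) is used.
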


The following technical combinatorial estimate, which is an immediate extension of a corollary of \citet[Lemma 37]{durrethesis}, will be important when proving tightness of the family $(\Phi_\eps)_\eps$, in order to bound the rate of growth of the moments of $\langle\Phi_\eps,f\rangle$ for some test function $f$:

\begin{lemma}\label{lemma:sum_p_point_function}
    Let $D \subset U$ such that $\dist{(D,\partial U)} > 0$ and $p\geq2$. Then
    \[
        \sum_{\substack{v_1,\dots,v_p\in D_\eps\\v_i\neq v_j \mathrm{ for }\, i\neq j}} \left(\prod_{i=1}^{p-1} \frac{1}{\left|v_i-v_{i+1}\right|^d}\right) \frac{1}{\left|v_p-v_1\right|^d} = \mathcal O_{D}\left(\eps^{-\frac{p}{2}-d+1}\right) \ ,
    \]
    where $D_\eps \coloneqq D/\eps\cap\Z^d$.
\end{lemma}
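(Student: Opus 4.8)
The plan is to reinterpret the cyclic sum as (an upper bound on) the trace of a power of a symmetric nonnegative matrix, and then to control that trace through its operator and Hilbert--Schmidt norms.

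First I would introduce the symmetric matrix $\mathcal K=(\mathcal K_{vw})_{v,w\in D_\eps}$ defined by $\mathcal K_{vw}\coloneqq\abs{v-w}^{-d}$ for $v\neq w$ and $\mathcal K_{vv}\coloneqq 0$. Writing the cyclic product in the lemma as $\prod_{i=1}^{p}\mathcal K_{v_iv_{i+1}}$ with the convention $v_{p+1}\coloneqq v_1$, and noting that all entries of $\mathcal K$ are nonnegative, one sees that dropping the distinctness constraint only adds nonnegative terms (and that tuples with $v_i=v_{i+1}$ contribute zero). Hence the left-hand side of the lemma is at most
\[
\sum_{v_1,\dots,v_p\in D_\eps}\prod_{i=1}^{p}\mathcal K_{v_iv_{i+1}}=\mathrm{Tr}\big(\mathcal K^{p}\big),
\]
so it suffices to bound $\mathrm{Tr}(\mathcal K^p)$.

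Next, since $\mathcal K$ is real symmetric it has real eigenvalues $(\lambda_i)_i$, and for $p\ge2$ I would use the elementary inequality $\mathrm{Tr}(\mathcal K^p)=\sum_i\lambda_i^p\le\|\mathcal K\|_{\mathrm{op}}^{\,p-2}\sum_i\lambda_i^2=\|\mathcal K\|_{\mathrm{op}}^{\,p-2}\,\|\mathcal K\|_{\mathrm{HS}}^2$. The two norms are then estimated separately, using that $D\subset U$ is bounded, so $D_\eps$ lies in a box of side length $\lesssim_D\eps^{-1}$ and has cardinality $\lesssim_D\eps^{-d}$. For the Hilbert--Schmidt norm the inner sum converges, $\sum_{w\neq v}\abs{v-w}^{-2d}\le\sum_{u\in\Z^d\setminus\{0\}}\abs{u}^{-2d}<\infty$ (as $2d>d$), whence $\|\mathcal K\|_{\mathrm{HS}}^2=\sum_{v\neq w}\abs{v-w}^{-2d}\lesssim_D\eps^{-d}$. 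For the operator norm, Schur's test together with the symmetry of $\mathcal K$ gives $\|\mathcal K\|_{\mathrm{op}}\le\max_v\sum_{w\neq v}\abs{v-w}^{-d}$, and the standard lattice estimate $\sum_{1\le\abs{u}\le R}\abs{u}^{-d}=\mathcal O(\log R)$ applied with $R\lesssim_D\eps^{-1}$ yields $\|\mathcal K\|_{\mathrm{op}}\lesssim_D\log(1/\eps)$.

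Combining these gives $\mathrm{Tr}(\mathcal K^p)\lesssim_D(\log(1/\eps))^{p-2}\,\eps^{-d}$. Since $(\log(1/\eps))^{p-2}\lesssim\eps^{-(p-2)/2}$ as $\eps\to0$ for every $p\ge2$, this implies $\mathrm{Tr}(\mathcal K^p)=\mathcal O_D(\eps^{-(p-2)/2-d})=\mathcal O_D(\eps^{-p/2-d+1})$, which is the desired bound (and in fact the logarithmic estimate is stronger for $p>2$, with equality of exponents when $p=2$, where $\mathrm{Tr}(\mathcal K^2)=\|\mathcal K\|_{\mathrm{HS}}^2\lesssim_D\eps^{-d}$ already). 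The proof has no deep obstacle; the one step that requires care is the operator-norm estimate, where it is crucial that $D$ be bounded: this forces $\mathcal K$ to connect only vertices at distance $\lesssim_D\eps^{-1}$, which is exactly what keeps the row sums growing merely logarithmically instead of polynomially, rendering $\|\mathcal K\|_{\mathrm{op}}$ negligible against any positive power of $\eps^{-1}$. One should also record that discarding the distinctness constraint is harmless by nonnegativity of the entries, which is the only place that passage is used.
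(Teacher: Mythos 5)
Your proof is correct, and it takes a genuinely different route from the paper, which in fact gives no proof at all: the authors state the lemma as ``an immediate extension of a corollary of'' D\"urre's Lemma 37 and defer entirely to that thesis. Your argument --- bounding the constrained cyclic sum by $\mathrm{Tr}(\mathcal K^p)$ (legitimate because the discarded tuples contribute only nonnegative terms, and tuples with equal consecutive entries contribute zero) and then using $\mathrm{Tr}(\mathcal K^p)\le\|\mathcal K\|_{\mathrm{op}}^{\,p-2}\|\mathcal K\|_{\mathrm{HS}}^{2}$ together with Schur's test and the convergence of $\sum_{u\neq 0}|u|^{-2d}$ --- is self-contained and every step checks: the eigenvalue inequality is valid for odd $p$ since $\lambda^p\le|\lambda|^{p-2}\lambda^2$, and the only geometric input is boundedness of $D$ (the hypothesis on the distance to $\partial U$ plays no role in this lemma), which gives $|D_\eps|\lesssim_D\eps^{-d}$ and row sums of order $\log(1/\eps)$. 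What your approach buys is a strictly sharper conclusion, $\mathcal O_D\big((\log(1/\eps))^{p-2}\,\eps^{-d}\big)$, which improves on the stated $\eps^{-p/2-d+1}$ for every $p>2$ and coincides with it at $p=2$, the only case where the stated exponent is attained; what it costs is essentially nothing, since the spectral machinery is elementary for finite symmetric matrices. In a final write-up you should record explicitly the two ingredients you already flagged: the Schur bound $\|\mathcal K\|_{\mathrm{op}}\le\max_v\sum_w\mathcal K_{vw}$ for symmetric nonnegative matrices, and the lattice estimate $\sum_{1\le|u|\le R}|u|^{-d}=\mathcal O(\log R)$.
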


\subsection{Proof of Theorem \ref{thm:cumulants}}\label{subsec:cum}

The strategy to prove the first theorem is based on decomposing the $k$-point functions into combinatorial expressions that involve basically covariances of Gaussian random variables. This is made possible by our explicit knowledge of the Gaussian field which underlies $\Phi_\eps$. These covariances can be estimated using the transfer matrix (Equation \eqref{eq:transfer_cov}), whose scaling limit is well-known: it is the differential of the Laplacian Green's function (cf. \citet[Theorem 1]{kassel-wu}).

In order to compute the $k$-point function we will first make use of Feynman diagrams techniques, of which we provide a brief exposition in the appendix at the end of the present paper. In particular we will make use of Theorem \ref{thm:janson_general}.

\begin{proof}[Proof of Theorem \ref{thm:cumulants}]
Let us compute the function
\[
    Q_k\big(x^{(1)}_\eps,\dots,x^{(k)}_\eps\big) \coloneqq \E\left[\prod_{j=1}^k{\Phi_\eps\big(x^{(j)}_\eps\big)}\right] \ .
\]
From Definition \ref{def:GradGFF} of $\Phi_\eps\big(x^{(j)}_\eps\big)$ we know that
\[
    Q_k\big(x^{(1)}_\eps,\dots,x^{(k)}_\eps\big) = \sum_{i_1,\dots,i_k\in \mathcal{E}} \E\left[\prod_{j=1}^k:\!\Big(\nabla_{i_j}\Gamma\big(x^{(j)}_\eps\big)\Big)^2\!:\right] \ ,
\]
with $\mathcal E$ the canonical basis of $\R^d$. In our case we have $k$ products of the Wick product $:\!\big(\nabla_{i_j}\Gamma\big(x^{(j)}_\eps\big)\big)^2\!:$ (indexed by $j$, not $i_j$). So we can identify $Y_j$ in Theorem~\ref{thm:janson_general} with $:\!\big(\nabla_{i_j}\Gamma\big(x^{(j)}_\eps\big)\big)^2\!:$ for any $j\in[k]$, being $\xi_{j1}=\xi_{j2}=\nabla_{i_j}\Gamma\big(x^{(j)}_\eps\big)$.

Let us denote $\overline{x_{e_i}^{(j)}} \coloneqq \big(x^{(j)}, x^{(j)} + e_i\big),\, i\in[d]\,,\, j \in [k]$ (we drop the dependence on $\eps$ to ease notation). Also to make notation lighter we fix the labels $i_j$ for the moment and keep them implicit. We then define $\mathcal{U} \coloneqq \big\{\overline{x^{(1)}},\overline{x^{(1)}},\dots,\overline{x^{(k)}},\overline{x^{(k)}}\big\}$, where each copy is considered distinguishable. We also define $FD_0$ as the set of complete Feynman diagrams on $\mathcal U$ such that no edge joins $\overline{x^{(i)}}$ with (the other copy of) $\overline{x^{(i)}}$. That is, a typical edge $b$ in a Feynman diagram $\gamma$ in $FD_0$ is of the form $\big(\overline{x^{(j)}}, \overline{x^{(m)}}\big)$, with $j\neq m$ and $j,m\in[k]$. Thus by Definition \ref{def:FD} we have
\[
    \E\left[\prod_{j=1}^k:\!\left(\nabla\Gamma\big(x^{(j)}_\eps\big)\right)^2\!:\right] = \sum_{\gamma\in FD_0} \nu(\gamma) = \sum_{\gamma\in FD_0} \prod_{b\in E_\gamma} \E\left[\nabla_{b^+}\Gamma\left((b^+)^-\right)\nabla_{b^-}\Gamma\left((b^-)^-\right)\right] \ ,
\]
where $E_\gamma$ are the edges of $\gamma$ (note that the edges of $\gamma$ connect edges of $U_\eps$) and $(b^+)^-$ denotes the tail of the edge $b^+$ (analogously for $b^-$). Lemma~\ref{lem:grad_gff_cov} and Equation~\eqref{eq:transfer_cov} yield
\[
    \E\left[\prod_{j=1}^k:\!\left(\nabla\Gamma\big(x^{(j)}_\eps\big)\right)^2\!:\right] = \sum_{\gamma\in FD_0} \prod_{b\in E_\gamma} T(b^+,b^-) \ .
\]

Now we would like to express Feynman diagrams in terms of permutations. We first note that any given $\gamma\in FD_0$ cannot join $\overline{x^{(i)}}$ with itself (neither the same nor the other copy of itself). So instead of considering permutations $\sigma\in \mathrm{Perm}(\mathcal U)$ we consider permutations $\sigma'\in S_k$, being $S_k$ the group of permutations of the set $[k]$. Any $\gamma\in FD_0$ is a permutation $\sigma\in \mathrm{Perm}(\mathcal U)$, but given the constraints just mentioned, we can think of them as permutations \mbox{$\sigma'\in S_k$} without fixed points; that is, $\sigma'\in S_k^0$. Thus
\[
    \E\left[\prod_{j=1}^k:\!\left(\nabla\Gamma\big(x^{(j)}_\eps\big)\right)^2\!:\right] = \sum_{\sigma'\in S_k^0} c(\sigma') \prod_{j=1}^k T\left(\overline{x^{(j)}},\overline{x^{(\sigma'(j))}}\right) \ ,
\]
with $c(\sigma')$ a constant that takes into account the multiplicity of different permutations $\sigma$ that give rise to the same $\sigma'$, depending on its number of subcycles.

Let us disassemble this expression even more. In general $\sigma'$ can be decomposed in $q$ cycles. Since $\sigma'\in S_k^0$ (in particular, it has no fixed points), there are at most $\floor{k/2}$ cycles in a given $\sigma'$. Hence,
\[
    \E\left[\prod_{j=1}^k:\!\left(\nabla\Gamma\big(x^{(j)}_\eps\big)\right)^2\!:\right] = \sum_{q=1}^{\floor{k/2}} \sum_{\substack{\sigma'\in S_k^0\\\sigma'=\sigma'_1\dots\sigma'_q}} c(\sigma') \prod_{h=1}^q \prod_{j\in\sigma'_h} T\left(\overline{x^{(j)}},\overline{x^{(\sigma'_h(j))}}\right) \ ,
\]
where the notation $j\in\sigma_h'$ means that $j$ belongs to the domain where $\sigma_h'$ acts (non trivially). As for $c(\sigma')$, given a cycle $\sigma'_i$, $i\in[q]$, it is straightforward to see that there are $2^{|\sigma'_i|-1}$ different Feynman diagrams in $FD_0$ that give rise to $\sigma_i'$, where $|\sigma'_i|$ is the length of the orbit of $\sigma'_i$. This comes from the fact that we have two choices for each element in the domain, but swapping them gives back the original Feynman diagram, so we obtain 
\[
    c(\sigma') = \prod_{i\in[q]} 2^{|\sigma'_i|-1} \ .
\]

Now we note that a cyclic decomposition of a permutation of the set $[k]$ determines a partition $\pi\in\Pi\left([k]\right)$ (although not injectively). This way, a sum over the number of partitions $q$ and $\sigma'\in S_k^0$ with $q$ cycles can be written as a sum over partitions $\pi$ with no singletons, and a sum over full cycles in each block $B$ (that is, those permutations consisting of only one cycle). Hence
\[
    \E\left[\prod_{j=1}^k:\!\left(\nabla\Gamma\big(x^{(j)}_\eps\big)\right)^2\!:\right] = \sum_{\substack{\pi\in\Pi([k])\\\text{\normalfont{w/o singletons}}}} \prod_{B\in\pi} \sum_{\sigma\in S_\mathrm{cycl}(B)} 2^{|B|-1} \prod_{j\in B} T\left(\overline{x^{(j)}},\overline{x^{(\sigma(j))}}\right) \ ,
\]
where we also made the switch between $\prod_{B\in\pi}$ and $\sum_{\sigma\in S_\mathrm{cycl}(B)}$ by grouping by factors. Alternatively, we can express this average in terms of $S_\mathrm{cycl}^0(B)$, the set of full cycles without fixed points, as
\[
    \E\left[\prod_{j=1}^k:\!\left(\nabla\Gamma\big(x^{(j)}_\eps\big)\right)^2\!:\right] = \sum_{\pi\in\Pi([k])} \prod_{B\in\pi} \sum_{\sigma\in S_\mathrm{cycl}^0(B)} 2^{|B|-1} \prod_{j\in B} T\left(\overline{x^{(j)}},\overline{x^{(\sigma(j))}}\right) \ .
\]

Finally, we need to put back the subscript $i_j$ in the elements $\overline{x^{(j)}}$ and sum over $i_1,\dots,i_k\in \mathcal{E}$. Note that for any function $f:\mathcal E^k \to \R$ we have
\[
    \sum_{i_1,\dots,i_k\in \mathcal{E}} f(i_1,\dots,i_k) = \sum_{\eta:[k]\to\mathcal E} f\left(\eta(1),\dots,\eta(k)\right) \ ,
\]
so that
\[
    \E\left[\prod_{j=1}^k{\Phi_\eps\big(x^{(j)}_\eps\big)}\right] = \sum_{\eta:[k]\to\mathcal E} \sum_{\pi\in\Pi([k])} \prod_{B\in\pi} 2^{\,\abs{B}-1} \sum_{\sigma\in S_\mathrm{cycl}^0(B)} \prod_{j\in B} T\left(\overline{x_{\eta(j)}^{(j)}},\overline{x_{\eta(\sigma(j))}^{(\sigma(j))}}\right) \ ,
\]
and grouping the $\eta(j)$'s according to each block $B\in\pi$ we get
\[
    \E\left[\prod_{j=1}^k{\Phi_\eps\big(x^{(j)}_\eps\big)}\right] =  \sum_{\pi\in\Pi([k])} \prod_{B\in\pi} 2^{\,\abs{B}-1} \sum_{\sigma\in S_\mathrm{cycl}^0(B)} \sum_{\eta:B\to\mathcal E} \prod_{j\in B} T\left(\overline{x_{\eta(j)}^{(j)}},\overline{x_{\eta(\sigma(j))}^{(\sigma(j))}}\right) \ .
\]

Regarding the transfer matrix $T$, using Equation~\eqref{eq:transfer_cov} we can write the above expression as
\[
    \E\left[\prod_{j=1}^k{\Phi_\eps\big(x^{(j)}_\eps\big)}\right] =  \sum_{\pi\in\Pi([k])} \prod_{B\in\pi} 2^{\,\abs{B}-1} \sum_{\sigma\in S_\mathrm{cycl}^0(B)} \sum_{\eta:B\to\mathcal E} \prod_{j\in B} \nabla_{\eta(j)}^{(1)}\nabla_{\eta(\sigma(j))}^{(2)}G_{U_\eps}\big(x^{(j)}_\eps,x^{(\sigma(j))}_\eps\big) \ ,
\]
obtaining the first result of the theorem. Finally, using Lemma~\ref{lemma:conv_green_diff} we obtain the second statement.
\end{proof}

\subsection{Proof of Corollary \ref{cor:cumulants} and Proposition \ref{thm:conformal}}

\begin{proof}[Proof of Corollary \ref{cor:cumulants}]
Recall that Definition~\ref{def:cum_mult} yields
\begin{equation}\label{eq:cum_mom}
    \E\left[\prod_{j=1}^k\Phi_\eps(x_\eps^{(j)})\right] = \sum_{\pi\in\Pi([k])} \prod_{B\in\pi} \kappa\left(\Phi_\eps(x_\eps^{(j)}) : j\in [k]\right) \ .
\end{equation}
From expressions \eqref{eq:cumulants} and \eqref{eq:moments_limit} in Theorem \ref{thm:cumulants} let us see that the equality follows factor by factor by using strong induction. For $k=1$ it is trivially true since the mean of the field is $0$. Now let now us assume that it holds for $n=1,\dots,k-1$. From~\eqref{eq:mom_to_cum} we have that
\[
    \kappa\Big(\Phi_\eps\big(x^{(j)}_\eps\big):j\in[k]\Big) = \E\left[\prod_{j=1}^k\Phi_\eps\big(x^{(j)}_\eps\big)\right] - \sum_{\substack{\pi\in\Pi([k])\\|\pi|>1}} \prod_{B\in\pi} \kappa\Big(\Phi_\eps\big(x^{(j)}_\eps\big):j\in B\Big) \ .
\]
Using again~\eqref{eq:mom_to_cum} on the expectation term and the induction hypothesis, after cancellations we get
\[
    \kappa\Big(\Phi_\eps\big(x^{(j)}_\eps\big):j\in[k]\Big) = 2^{k-1}\sum_{\sigma\in S_\mathrm{cycl}^0([k])} \sum_{\eta:[k]\to\mathcal{E}}\prod_{j=1}^k \nabla_{\eta(j)}^{(1)}\nabla_{\eta(\sigma(j))}^{(2)}G_{U_\eps}\big(x^{(j)}_\eps,x^{(\sigma(j))}_\eps\big) \ .
\]
Thus the proof follows by induction.
\end{proof}

The equality in absolute value between our cumulants and those of \citet[Theorem 1]{durre} allow us to adapt his proof and conclude that, in the case of $d=2$, our field is conformally covariant with scale dimension 2.
\begin{proof}[Proof of Proposition \ref{thm:conformal}]
It is known~\cite[Proposition 1.9]{berenotes} that the continuum Green's function $G_U(\cdot, \cdot)$, defined in Equation \eqref{def:cGF}, is conformally invariant against a conformal mapping $h:U\to U'$; that is, for any $v\neq w\in U$,
\[
    G_U(v,w) = G_{U'}\left(h(v),h(w)\right) \ .
\]
Recalling expression \eqref{eq:limiting_cumulants} for the limiting cumulants we see that, for any integer $\ell \geq 2$,
\[
    \lim_{\eps\to 0} \eps^{-2\ell} \kappa\Big(\Phi^U_\eps\big(x^{(j)}_\eps\big):j\in[\ell]\Big) = 2^{\ell-1}\sum_{\sigma\in S_\mathrm{cycl}^0([\ell])} \sum_{\eta:[\ell]\to\mathcal{E}}\prod_{j=1}^\ell \partial_{\eta(j)}^{(1)}\partial_{\eta(\sigma(j))}^{(2)}G_{U'}\Big(h_\eps\big(x^{(j)}\big), h_\eps\big(x^{(\sigma(j))}\big)\Big) \ ,
\]
where the derivatives on the right hand side act on $G_{U'}\circ (h_\eps,h_\eps)$, not on $G_{U'}$.
From the cumulants expression we deduce that, for a given permutation $\sigma$ and assignment $\eta$, each point $x^{(j)}$ will appear exactly twice in the arguments of the product of differences of $G_{U'}$. Thus, using the chain rule and the Cauchy-Riemann equations, for a fixed $\sigma$ we obtain an overall factor $\prod_{j=1}^\ell \big|h'\big(x^{(j)}\big)\big|^2$
after summing over all $\eta$. We then obtain
\[
    \lim_{\eps\to 0} \eps^{-2\ell} \kappa\Big(\Phi^U_\eps\big(x^{(j)}_\eps\big):j\in[\ell]\Big) = \prod_{j=1}^\ell \abs{h'\big(x^{(j)}\big)}^2 \lim_{\eps\to 0} \eps^{-2\ell} \kappa\Big(\Phi^{U'}_\eps\Big(h_\eps\big(x^{(j)}\big)\Big):j\in[\ell]\Big) \ .
\]
The result follows plugging this expression into the moments.
\end{proof}
\subsection{Proof of Theorem \ref{thm:goes_to_WN}}

The proof of this Theorem will be split into two parts. First we will show that the family $(\Phi_\eps)_{\eps>0}$ is tight in some appropriate Besov space and then we will show convergence of finite-dimensional distributions $(\langle \Phi_{\eps}, f_i\rangle)_{i\in [m]}$ and identify the limit. \\

\noindent
\textbf{Tightness}
\begin{prop}\label{thm:tightness}
    Let $U\subset \R^d$, $d\geq 2$. Under the scaling $\eps^{-d/2}$, the family $(\Phi_\eps)_{\eps>0}$ is tight in $\mathcal B_{p,q}^{\alpha,\mathrm{loc}}(U)$ for any $\alpha < -d/2$ and $p,q \in [1,\infty]$. The family is also tight in $\mathcal C^\alpha_\mathrm{loc}(U)$ for every $p,q\in[1,\infty]$ and $\alpha < -d/2$.
\end{prop}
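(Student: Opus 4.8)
The plan is to deduce tightness from a uniform moment bound on the wavelet coefficients of the rescaled field, and then to transfer it to all indices $(p,q)$ and to the H\"older scale by soft embedding arguments. First I would fix a cutoff $\widetilde\chi\in\mathcal C_c^\infty(U)$ with $D\coloneqq\supp\widetilde\chi$ satisfying $\dist{(D,\partial U)}>0$, so that Corollary~\ref{cor:covariance_bound}, Lemma~\ref{lemma:bound_2_points_durre} and Lemma~\ref{lemma:sum_p_point_function} apply on $D_\eps$. Since the embeddings $\mathcal B^{\beta,\mathrm{loc}}_{p,q}\hookrightarrow\mathcal B^{\alpha,\mathrm{loc}}_{p,q}$ are compact for $\beta>\alpha$ on a bounded domain, it is enough to produce, for every even integer $p$ and every $\beta$ with $\alpha<\beta<-d/2$, a bound $\sup_{\eps}\E\big[\|\widetilde\chi\,\eps^{-d/2}\Phi_\eps\|_{\mathcal B^{\beta}_{p,p}}^{p}\big]<\infty$. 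Arbitrary $q$ is then recovered from the Besov embedding in the $q$-parameter ($\mathcal B^{\beta}_{p,p}\hookrightarrow\mathcal B^{\alpha}_{p,q}$ for $\beta>\alpha$ and all $q$), the remaining $p$ from Lemma~\ref{lemma:embedding}, and the optimal space $\mathcal C^\alpha_\mathrm{loc}=\mathcal B^{\alpha,\mathrm{loc}}_{\infty,\infty}$ from $\mathcal B^{\beta,\mathrm{loc}}_{p,p}\hookrightarrow\mathcal B^{\alpha,\mathrm{loc}}_{\infty,\infty}$, which holds once $\beta-d/p>\alpha$; since $\alpha<-d/2$ one can take $p$ large enough that $\beta-d/p$ still lies in $(\alpha,-d/2)$.

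Next I would reduce the Besov norm to its wavelet coefficients. Using the characterisation behind Definition~\ref{def:locBes} and the finite overlap of the compactly supported wavelets at a fixed scale, one has $\|\mathcal W_n f\|_{L^p}^p\simeq 2^{dn(p/2-1)}\sum_{i<2^d}\sum_{x\in\Lambda_n}\big|\langle f,\psi^{(i)}_{n,x}\rangle\big|^p$, with only $\mathcal O(2^{dn})$ indices $x$ meeting $\supp\widetilde\chi$. Taking expectations, everything hinges on the single-coefficient moment $m_{n,x}\coloneqq\E\big[\big|\langle\widetilde\chi\,\eps^{-d/2}\Phi_\eps,\psi^{(i)}_{n,x}\rangle\big|^p\big]$. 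If $m_{n,x}\lesssim_p 1$ uniformly in $n,x,\eps$, then $\E[\|\mathcal W_n f\|_{L^p}^p]\lesssim 2^{dnp/2}$, whence
\[
\sum_{n\ge0}2^{\beta n p}\,\E\big[\|\mathcal W_n f\|_{L^p}^p\big]\;\lesssim\;\sum_{n\ge0}2^{np(\beta+d/2)}\,<\,\infty\qquad\text{iff}\qquad\beta<-\tfrac d2 \,,
\]
the $\mathcal V_0$-term being finite by the same reasoning. This is precisely where the threshold $-d/2$ appears: it is the balance between the $2^{dn}$ wavelets per scale and the $L^p$-normalisation, given uniformly bounded coefficient moments.

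The analytic heart is thus the bound $m_{n,x}\lesssim_p1$. Since $\langle\Phi_\eps,g\rangle$ lives in the second Wiener chaos of $\Gamma$, its moments are comparable to its variance; I would make this explicit via the cumulant expansion of Definition~\ref{def:cum_mult}, writing the $p$-th moment as a sum over partitions of products of joint cumulants and estimating each cumulant through Corollary~\ref{cor:cumulants}. There the cyclic-permutation structure together with $|\nabla_i^{(1)}\nabla_j^{(2)}G_{U_\eps}(v,w)|\lesssim|v-w|^{-d}$ from Lemma~\ref{lemma:bound_2_points_durre} reduces an order-$r$ cumulant of the point field to a cyclic lattice sum $\sum_{v_1,\dots,v_r}\prod_i|v_i-v_{i+1}|^{-d}\,|v_r-v_1|^{-d}$, exactly the object controlled by Lemma~\ref{lemma:sum_p_point_function}, while the second cumulant is handled directly by Corollary~\ref{cor:covariance_bound}. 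Smearing against $\psi^{(i)}_{n,x}$ and inserting the $\eps^{-d/2}$ scaling, these estimates must be assembled in the two regimes $2^{-n}\gtrsim\eps$ and $2^{-n}\lesssim\eps$: in the coarse regime the summable tail $|v-w|^{-2d}$ (here $2d>d$) makes the rescaled variance comparable to the white-noise value $\chi$ and uniform in $n,x$, whereas in the fine regime $\psi^{(i)}_{n,x}$ meets $\mathcal O(1)$ lattice cells and the mean-zero property $\int\psi^{(i)}=0$ furnishes an extra gain, so that $m_{n,x}\lesssim_p1$ in both cases.

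I expect the main obstacle to be exactly this uniform-in-$n$ control: one must check that the combinatorial cumulant bounds assemble to a constant \emph{independent of the scale} $n$, which requires carefully matching the two scale regimes and treating the diagonal (coinciding-point) contributions that Lemma~\ref{lemma:sum_p_point_function} excludes, using instead the $v=w$ case of Corollary~\ref{cor:covariance_bound}. A secondary technical point is the passage from the diagonal space $\mathcal B^{\beta}_{p,p}$ to the optimal $\mathcal C^\alpha_\mathrm{loc}$, where $p$ must be chosen large enough that the loss $d/p$ in the Besov embedding still leaves $\beta-d/p\in(\alpha,-d/2)$; the compactness of that embedding is what turns the moment bound into genuine tightness rather than mere boundedness.
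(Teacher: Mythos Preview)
Your proposal is correct and follows essentially the same route as the paper: the analytic core in both is the two-regime estimate (coarse $2^{-n}\gtrsim\eps$ versus fine $2^{-n}\lesssim\eps$) on the $p$-th moment of $\langle\eps^{-d/2}\Phi_\eps,\psi^{(i)}_{n,x}\rangle$, handled through the cumulant formula of Corollary~\ref{cor:cumulants}, the pointwise decay of Lemma~\ref{lemma:bound_2_points_durre}, and the cyclic lattice sum of Lemma~\ref{lemma:sum_p_point_function}, after which embeddings give all $(p,q)$ and the H\"older scale. The only packaging difference is that the paper invokes the ready-made criterion of \citet[Theorem~2.30]{mf} (stated here as Theorem~\ref{thm:criterion}) over a spanning sequence $(K_n,k_n)$, whereas you rebuild the equivalent statement by bounding $\E\big[\|\widetilde\chi\,\eps^{-d/2}\Phi_\eps\|_{\mathcal B^\beta_{p,p}}^p\big]$ directly and appealing to compact Besov embeddings; one small remark is that in the fine regime the paper does not use the vanishing mean of $\psi^{(i)}$ but only the trivial bound $\int_{S_\eps(y)}|g^{(i)}_{n,x}|\le 2^{-dn}$, which already suffices for $m_{n,x}\lesssim1$.
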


Recall that the local Besov space $\mathcal B_{p,q}^{\alpha,\mathrm{loc}}(U)$  was defined in Definition \ref{def:locBes} and the local H\"older space $\mathcal C^\alpha_\mathrm{loc}(U)$ in Definition \ref{def:locHoel}.\\

\noindent
\textbf{Finite-dimensional distributions}

\begin{prop}\label{prop:convWN}
     Let $U\subset \R^d$ and $d\geq 2$. There exists a normalization constant $\chi>0$ such that, for any set of functions $\left\{f_i\in L^2(U):\, i\in[m], \, m\in\N\right\}$, the random elements $\langle\Phi_\eps,f_i\rangle$ converge in the following sense:
    \[
        \left(\frac{\eps^{-d/2}}{\sqrt \chi}\langle\Phi_\eps,f_i\rangle\right)_{i\in[m]} \overset{d}\longrightarrow \left(\langle W, f_i \rangle \right)_{i\in [m]}
    \]
    as $\eps \to 0$.
\end{prop}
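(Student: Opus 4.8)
The plan is to prove convergence to a Gaussian by the method of cumulants, together with the Cramér--Wold device. First I would reduce the vector statement to a single test function: by linearity of the pairing, $\sum_{i\in[m]}a_i\langle\Phi_\eps,f_i\rangle=\langle\Phi_\eps,f\rangle$ with $f\coloneqq\sum_i a_i f_i\in\mathcal C_c^\infty(U)$, and likewise $\sum_i a_i\langle W,f_i\rangle=\langle W,f\rangle\sim\mathcal N\big(0,\|f\|_{L^2(U)}^2\big)$. Thus it suffices to show that $\tfrac{\eps^{-d/2}}{\sqrt\chi}\langle\Phi_\eps,f\rangle\overset{d}\longrightarrow\mathcal N\big(0,\|f\|_{L^2(U)}^2\big)$ for every fixed $f$. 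I would prove this by showing that every cumulant converges to the corresponding Gaussian cumulant. The first cumulant vanishes identically, since $\Phi_\eps$ is Wick-ordered and hence $\E[\langle\Phi_\eps,f\rangle]=0$. For order $\ell\ge 2$, multilinearity of joint cumulants gives
\[
\kappa_\ell\big(\langle\Phi_\eps,f\rangle\big)=\int_{U^\ell}\kappa\big(\Phi_\eps(\floor{x_1/\eps}),\dots,\Phi_\eps(\floor{x_\ell/\eps})\big)\prod_{j=1}^\ell f(x_j)\,\d x,
\]
and since $\floor{\cdot/\eps}$ is constant on cubic cells of volume $\eps^d$ while $f$ is smooth, this integral equals $\eps^{d\ell}\sum_{v_1,\dots,v_\ell\in U_\eps}\kappa(\Phi_\eps(v_1),\dots,\Phi_\eps(v_\ell))\prod_j f(\eps v_j)$ up to a Riemann-sum error controlled by the smoothness of $f$.

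For the variance ($\ell=2$), the two-point expression from the Remark following Theorem~\ref{thm:cumulants} gives $\kappa(\Phi_\eps(v),\Phi_\eps(w))=2\sum_{i,j\in[d]}\big(\nabla_i^{(1)}\nabla_j^{(2)}G_{U_\eps}(v,w)\big)^2$, so the scaled variance equals
\[
\frac{\eps^d}{\chi}\sum_{v,w\in U_\eps}2\sum_{i,j\in[d]}\big(\nabla_i^{(1)}\nabla_j^{(2)}G_{U_\eps}(v,w)\big)^2 f(\eps v)f(\eps w)+o(1).
\]
On the support of $f$ the points lie in the bulk, so I would replace $G_{U_\eps}$ by the translation-invariant $G_0$ via \eqref{eq:one_durre}, turning the summand into the convolution kernel $K(w-v)\coloneqq 2\sum_{i,j}\big(\nabla_i^{(1)}\nabla_j^{(2)}G_0(0,w-v)\big)^2$. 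By Lemma~\ref{lemma:lemma_29_durre} one has $K(u)=\mathcal O(|u|^{-2d})$, which is summable with $\sum_{u\in\Z^d}K(u)=\chi$ by \eqref{eq:chi_def}; since $f$ is smooth, $\sum_w K(w-v)f(\eps w)=\chi f(\eps v)+\mathcal O(\eps)$ uniformly. Hence the scaled variance reduces to $\eps^d\sum_v f(\eps v)^2\to\int_U f^2=\|f\|_{L^2(U)}^2$, exactly the variance of $\langle W,f\rangle$.

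For $\ell\ge 3$ only a bound is needed. Bounding $|f|$ by a constant on a compact $D$ with $\supp f\subset D\subset U$ and $\dist(D,\partial U)>0$, and invoking Corollary~\ref{cor:cumulants} together with the pointwise estimate \eqref{eq:two_durre} (absorbing the finite sum over $\eta:[\ell]\to\mathcal E$ into the constant), the scaled cumulant is dominated by
\[
\Big|\kappa_\ell\big(\tfrac{\eps^{-d/2}}{\sqrt\chi}\langle\Phi_\eps,f\rangle\big)\Big|\lesssim\frac{\eps^{d\ell/2}}{\chi^{\ell/2}}\sum_{\sigma\in S_\mathrm{cycl}^0([\ell])}\ \sum_{\substack{v_1,\dots,v_\ell\in D_\eps\\ v_i\neq v_j\,\mathrm{ for }\,i\neq j}}\prod_{j=1}^\ell\frac{1}{|v_j-v_{\sigma(j)}|^d}\ +\ (\text{lower order}),
\]
where the configurations with coincident points are of strictly lower order by the same power counting. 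Each full cycle $\sigma$ reduces, after relabeling, to the sum estimated in Lemma~\ref{lemma:sum_p_point_function}, namely $\mathcal O_D\big(\eps^{-\ell/2-d+1}\big)$; multiplying by the prefactor yields $\mathcal O\big(\eps^{(d-1)(\ell-2)/2}\big)$, which tends to $0$ for every $\ell\ge 3$ and $d\ge 2$. Thus all cumulants of order at least three vanish, and since the cumulants of $\tfrac{\eps^{-d/2}}{\sqrt\chi}\langle\Phi_\eps,f\rangle$ converge to those of the moment-determinate law $\mathcal N\big(0,\|f\|_{L^2(U)}^2\big)$, the method of cumulants gives the one-dimensional convergence, and Cramér--Wold upgrades it to the claimed joint convergence.

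The main obstacle is the exact identification of the variance limit: unlike the higher cumulants, where a crude bound suffices, here I must pin down the precise constant so that $\chi$ cancels and leaves exactly $\|f\|_{L^2(U)}^2$. This requires justifying the bulk replacement of $G_{U_\eps}$ by $G_0$ uniformly on the support of $f$ through \eqref{eq:one_durre}, verifying that the lattice sum of the kernel $K$ converges absolutely to $\chi$, and controlling the Riemann-sum error both near $\partial U$ and at the scale on which $f$ varies. Once the variance is fixed at $\|f\|_{L^2(U)}^2$ and the higher cumulants are shown to vanish, the conclusion follows as above.
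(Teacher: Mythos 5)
Your proposal follows essentially the same route as the paper: identify $\chi$ as the absolutely convergent lattice sum \eqref{eq:chi_def}, kill all cumulants of order $\ell\ge 3$ by combining \eqref{eq:two_durre} with Lemma~\ref{lemma:sum_p_point_function} to get the rate $\eps^{(d-1)(\ell-2)/2}$, and compute the variance limit by replacing $G_{U_\eps}$ with $G_0$ in the bulk and using translation invariance of the resulting kernel. The only cosmetic difference is that you invoke Cram\'er--Wold on a single linear combination, whereas the paper computes the cross-cumulants $\kappa(\langle\Phi_\eps,f_p\rangle_S,\langle\Phi_\eps,f_q\rangle_S)$ directly; these are equivalent.

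One step you flag as ``the main obstacle'' does fail if executed naively, and it is worth recording how the paper closes it. Applying \eqref{eq:one_durre} termwise gives a per-pair error of order $\eps^d$ in $\kappa(\Phi_\eps(v),\Phi_\eps(w))-\kappa_0(v,w)$, but there are $\mathcal O(\eps^{-2d})$ pairs $(v,w)$ in $D_\eps\times D_\eps$, so after multiplying by the prefactor $\eps^d$ the accumulated error is $\mathcal O(1)$, not $o(1)$. The paper's fix is to split the sum at $|v-w|\le 1/\sqrt{\eps}$: the far pairs contribute $o(1)$ directly from the covariance bound $|v-w|^{-2d}$ of Corollary~\ref{cor:covariance_bound} (no replacement needed there), while the near pairs number only $\mathcal O(\eps^{-3d/2})$, so the replacement error is $\eps^d\cdot\eps^{-3d/2}\cdot\eps^d=\mathcal O(\eps^{d/2})$. (Alternatively, the sharper bound $|a^2-b^2|=|a-b|\,|a+b|\lesssim \eps^d|v-w|^{-d}$ makes the unsplit error $\mathcal O(\eps^d\log(1/\eps))$.) A second, smaller point: the ``Riemann-sum error'' between $\langle\Phi_\eps,f\rangle$ and $\eps^d\sum_v f(\eps v)\Phi_\eps(v)$ multiplies the \emph{random} field, so smoothness of $f$ alone does not control it; the paper proves a separate lemma showing this remainder vanishes in $L^2$ via Cauchy--Schwarz, the Lebesgue differentiation theorem, and again Corollary~\ref{cor:covariance_bound}. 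Both repairs use only tools you already cite, so the argument goes through once these two estimates are made explicit.
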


\subsubsection{Proof of Proposition \ref{thm:tightness}}

We will use the tightness criterion given in Theorem 2.30 in \citet{mf}. First we need to introduce some notation. 
Let $f$ and $(g^{(i)})_{1\leq i<2^d}$ be compactly supported test functions of class $\mathcal C^r_c(\R^d)$, $r\in \mathbb{N}$. Let $\Lambda_n \coloneqq \Z^d/2^n$, and let $R>0$ be such that
\begin{equation}\label{eq:support}
    \supp{f} \subset B_0(R) \ , \quad \supp{g^{(i)}} \subset B_0(R) \ , \quad i < 2^d \ .
\end{equation}
Let $K \subset U$ be compact and $k \in \N$. We say that the pair $(K, k)$ is \emph{adapted} if
\[
    2^{-k}R < \dist{(K,U^c)} \ .
\]
We say that the set $\mathcal K$ is a \emph{spanning sequence} if it can be written as
\[
    \mathcal K = \left\{(K_n,k_n): n \in \N\right\} \ ,
\]
where $(K_n)$ is an increasing sequence of compact subsets of $U$ such that $\bigcup_n K_n = U$, and for every $n$ the pair $(K_n, k_n)$ is adapted.

\begin{theorem}[Tightness criterion,{~\citet[Theorem~2.30]{mf}}]\label{thm:criterion}
Let $f, (g^{(i)})_{1\leq i<2^d}$ in $\mathcal C^r_c(\R^d)$ with the support properties mentioned above, and fix $p\in[1,\infty)$ and $\alpha,\beta\in\R$ satisfying $\abs{\alpha}, \abs{\beta} < r, \alpha<\beta$. Let $(\Phi_m)_{m\in\N}$ be a family of random linear functionals on $\mathcal{C}_c^r(U)$, and let $\mathcal K$ be a spanning sequence. Assume that for every $(K, k) \in \mathcal K$ , there exists a constant $c = c(K,k) < \infty$ such that for every $m \in \N$,
\begin{equation}\label{eq:crit_1}
    \sup_{x\in\Lambda_k\cap K}{\E\left[\abs{\big\langle \Phi_m,f\big(2^k(\cdot-x)\big)\big\rangle}^p\right]^{1/p}} \leq c
\end{equation}
and
\begin{equation}\label{eq:crit_2}
    \sup_{x\in\Lambda_n\cap K}{2^{dn}\E\left[\abs{\big\langle \Phi_m,g^{(i)}\big(2^n(\cdot-x)\big)\big\rangle}^p\right]^{1/p}} \leq c \, 2^{-n\beta} \ , \quad i<2^d \ , \ n\geq k \ .
\end{equation}
Then the family $(\Phi_m)_m$ is tight in $\mathcal{B}^{\alpha,\mathrm{loc}}_{p,q}(U)$ for any $q\in[1,\infty]$. If moreover $\alpha < \beta -  d /p$, then the family is also tight in $\mathcal C^\alpha_\mathrm{loc}(U)$.
\end{theorem}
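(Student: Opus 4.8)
The plan is to deduce tightness from a single uniform moment bound on the local Besov norm, and then to upgrade that bound to tightness through a compact-embedding argument. Fix a cutoff $\widetilde\chi\in\mathcal C^\infty_c(U)$ with support in a compact $K\subset U$ and choose $k$ so that $(K,k)$ is adapted. I would work with the wavelet decomposition
\[
\widetilde\chi\Phi_m=\mathcal V_k(\widetilde\chi\Phi_m)+\sum_{n\ge k}\mathcal W_n(\widetilde\chi\Phi_m),
\]
which yields a norm equivalent to the one of Definition \ref{def:locBes} but anchored at scale $k$. The objective of the first and decisive step is the estimate $\sup_m\E\big[\|\widetilde\chi\Phi_m\|_{\mathcal B^\alpha_{p,q}}^p\big]<\infty$, valid for every $\alpha<\beta$.

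First I would relate the wavelet coefficients of $\widetilde\chi\Phi_m$ to the quantities appearing in \eqref{eq:crit_1}--\eqref{eq:crit_2}. Since $\langle\widetilde\chi\Phi_m,\psi^{(i)}_{n,x}\rangle=\langle\Phi_m,\widetilde\chi\,\psi^{(i)}_{n,x}\rangle$, a change of variables gives $\widetilde\chi\,\psi^{(i)}_{n,x}=2^{dn/2}g_{n,x}\big(2^n(\cdot-x)\big)$ with $g_{n,x}(z)\coloneqq\widetilde\chi(x+2^{-n}z)\,\psi^{(i)}(z)$ (and an analogous coarse-scale function built from $\phi$). Because $2^{-n}\le1$ and $\widetilde\chi$ is smooth, the functions $g_{n,x}$ remain in a bounded subset of $\mathcal C^r_c(B_0(R))$, so the hypotheses apply to them with a constant uniform in the scale $n$ and the position $x$. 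Adaptedness ensures that for $x\in\Lambda_n\cap K$ and $n\ge k$ the support of $\psi^{(i)}_{n,x}$ lies inside $U$, so the pairing is legitimate, while coefficients at $x$ outside a $2^{-n}R$-neighbourhood of $K$ vanish. The upshot is $\E\big[|\langle\widetilde\chi\Phi_m,\psi^{(i)}_{n,x}\rangle|^p\big]^{1/p}\le c\,2^{-dn/2-n\beta}$ for the $\mathcal O(2^{dn})$ relevant positions, and a uniform (in $m$) bound for the coarse term from \eqref{eq:crit_1}.

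Next I would convert these coefficient bounds into $L^p$-in-space bounds on the projections. Exploiting the uniformly bounded overlap of $(\psi^{(i)}_{n,x})_x$ at fixed $n$, a pointwise $\ell^p$--$L^p$ estimate gives $\|\mathcal W_n(\widetilde\chi\Phi_m)\|_{L^p}^p\lesssim\sum_{i,x}|\langle\widetilde\chi\Phi_m,\psi^{(i)}_{n,x}\rangle|^p\,\|\psi^{(i)}_{n,x}\|_{L^p}^p$. Taking expectations and inserting $\|\psi^{(i)}_{n,x}\|_{L^p}^p=2^{dn(p/2-1)}\|\psi^{(i)}\|_{L^p}^p$ together with the $\mathcal O(2^{dn})$ count of positions, all scale factors cancel and leave $\E\big[\|\mathcal W_n(\widetilde\chi\Phi_m)\|_{L^p}^p\big]^{1/p}\lesssim 2^{-n\beta}$. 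Bounding the $\ell^q$-norm by the $\ell^1$-norm and applying the triangle inequality in $L^p(\mathbb P)$ then yields $\E\big[\|\widetilde\chi\Phi_m\|_{\mathcal B^\alpha_{p,q}}^p\big]^{1/p}\lesssim\sum_{n\ge k}2^{(\alpha-\beta)n}$, which is finite exactly because $\alpha<\beta$. This is the desired uniform moment bound.

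Finally I would pass from moments to tightness. For $\alpha<\alpha_0<\beta$ the moment bound holds in $\mathcal B^{\alpha_0,\mathrm{loc}}_{p,q}(U)$; since the cutoffs confine all supports to fixed compacts, bounded balls of $\mathcal B^{\alpha_0,\mathrm{loc}}_{p,q}(U)$ are relatively compact in $\mathcal B^{\alpha,\mathrm{loc}}_{p,q}(U)$ (a Rellich-type compactness), and combining this with Markov's inequality over a countable determining family of cutoff semi-norms, together with a diagonal extraction, gives tightness in $\mathcal B^{\alpha,\mathrm{loc}}_{p,q}(U)$. For the H\"older statement I would use $\mathcal C^\alpha_\mathrm{loc}(U)=\mathcal B^{\alpha,\mathrm{loc}}_{\infty,\infty}(U)$ and the standard Besov embedding $\mathcal B^{\alpha+d/p}_{p,q}\hookrightarrow\mathcal B^{\alpha}_{\infty,q}\hookrightarrow\mathcal B^{\alpha}_{\infty,\infty}$: under $\alpha<\beta-d/p$ one has $\alpha+d/p<\beta$, so tightness already holds in $\mathcal B^{\alpha+d/p,\mathrm{loc}}_{p,q}(U)$ and transfers to $\mathcal C^\alpha_\mathrm{loc}(U)$. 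The main obstacle is the localization in the first step: verifying that multiplication by the cutoff turns each $\psi^{(i)}_{n,x}$ into a rescaled test function drawn from a bounded $\mathcal C^r$ family, so that \eqref{eq:crit_1}--\eqref{eq:crit_2} can be invoked with constants independent of the scale, all while the adapted/spanning-sequence bookkeeping keeps every relevant support inside $U$ and the scale prefactors cancel precisely.
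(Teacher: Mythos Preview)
The paper does not prove Theorem~\ref{thm:criterion}. It is quoted verbatim from \citet[Theorem~2.30]{mf} as an external tool and is used without proof in the proof of Proposition~\ref{thm:tightness}; there is therefore no ``paper's own proof'' to compare your proposal against.

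For what it is worth, your outline is broadly the right strategy for establishing such a criterion, and is in the spirit of the argument one finds in the cited reference: control the wavelet coefficients via the hypotheses, sum them into a Besov seminorm bound, and upgrade to tightness via a compact embedding between Besov spaces of different regularities. One point that deserves more care is the localization step: the hypotheses \eqref{eq:crit_1}--\eqref{eq:crit_2} are formulated for the \emph{fixed} functions $f,\,g^{(i)}$, not for an arbitrary bounded family in $\mathcal C^r_c(B_0(R))$, so you cannot directly apply them to the modified test functions $g_{n,x}(z)=\widetilde\chi(x+2^{-n}z)\psi^{(i)}(z)$. The standard route (as in \citet{mf}) is rather to first bound the Besov norm of $\Phi_m$ itself via the wavelet coefficients $\langle\Phi_m,\psi^{(i)}_{n,x}\rangle$---which are exactly the quantities in \eqref{eq:crit_2} up to the $2^{dn/2}$ normalization---and only afterwards multiply by the cutoff, using that multiplication by $\widetilde\chi$ is bounded on $\mathcal B^\alpha_{p,q}$. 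Your compact-embedding and Besov-embedding conclusions are correct.
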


\begin{proof}[Proof of Proposition \ref{thm:tightness}]
We will consider an arbitrary scaling $\eps^{\gamma}, \gamma \in\R$, and then choose an optimal one to make the fields tight. We define $\widetilde\Phi_\eps$ as the scaled version of $\Phi_\eps$, that is,
\[
    \widetilde\Phi_\eps(x) \coloneqq \eps^\gamma \Phi_\eps(x) = \eps^\gamma \sum_{i=1}^d{:\!\left(\nabla_i\Gamma(x)\right)^2\!:} \ , \quad x \in U_\eps \ .
\]
The family of random linear functionals $(\Phi_m)_{m\in\N}$ in Theorem~\ref{thm:criterion} is to be identified with the fields $(\widetilde\Phi_\eps)_{\eps>0}$ taking for example $\eps$ decreasing to zero along a dyadic sequence. Now let us expand the expressions \eqref{eq:crit_1} and \eqref{eq:crit_2} in Theorem \ref{thm:criterion}. To simplify notation, let us define $f_{k,x}(\cdot) \coloneqq f\big(2^k(\cdot-x)\big)$ for $k\in\N$ and $x\in \R^d$, and analogously for $g^{(i)}$.

In the proof we will set $p\in 2\N$. This will not affect the generality of our results because of the embedding of local Besov spaces described in Lemma~\ref{lemma:embedding}.
This means that we can read \eqref{eq:crit_1} and \eqref{eq:crit_2} forgetting the absolute value in the left-hand side. Let us rewrite the $p$-th moment of $\big\langle\widetilde\Phi_\eps,f_{k,x}\big\rangle$ as
\begin{equation}\label{eq:crit_1_1}
   0\le \E\left[{\big\langle \widetilde\Phi_\eps,f_{k,x}\big\rangle}^p\right] = \eps^{\gamma p}\, \E\left[\int_{U^p}\Phi_\eps\left(\floor{x_1/\eps}\right)\cdots\Phi_\eps\left(\floor{x_p/\eps}\right) f_{k,x}(x_1)\cdots f_{k,x}(x_p)\, \d x_1\cdots\d x_p \right]\ .
\end{equation}
We will seek for a more convenient expression to work with. If we allow ourselves to slightly abuse the notation for $\widetilde\Phi_\eps$, then we can express it in a piece-wise continuous fashion as
\[
    \widetilde\Phi_\eps(x) = \eps^{\gamma} \sum_{y\in U_\eps} \1_{S_1(y)}(x) \sum_{i=1}^d{:\!\left(\nabla_i\Gamma(y)\right)^2\!:} = \eps^{\gamma} \sum_{y\in U_\eps} \1_{S_1(y)}(x) \Phi_\eps(y) \ ,
\]
where $S_a(y)$ is the square of side-length $a$ centered at $y$. Under a change of variables, if we define $U^\eps \coloneqq U\cap\eps\Z^d$ (mind the superscript and the definition which is different from that of $U_\eps$ in Section \ref{sec:not}) then
\[
    \widetilde\Phi_\eps(x) = \eps^{\gamma} \sum_{y\in U^\eps} \1_{S_\eps(y/\eps)}(x) \Phi_\eps(y/\eps) \ .
\]
This way, expression \eqref{eq:crit_1_1} now reads
\[
    \E\left[{\big\langle \widetilde\Phi_\eps,f_{k,x}\big\rangle}^p\right] = \eps^{\gamma p}\, \E\left[\sum_{y_1,\dots,y_p\in U^\eps} {\Phi_\eps(y_1/\eps)\cdots\Phi_\eps(y_p/\eps)}\prod_{j=1}^p{\int_{S_\eps(y_j)}{f_{k,x}(z) \,\d z}} \right]\ ,
\]
Therefore the left-hand side of expression \eqref{eq:crit_1} from Theorem \ref{thm:criterion} is upper-bounded by
\begin{equation}\label{eq:first}
    \eps^{\gamma} \sup_{x\in\Lambda_k \cap K}{\left[\sum_{y_1,\dots,y_p\in U^\eps} \E\left[\Phi_\eps(y_1/\eps)\cdots\Phi_\eps(y_p/\eps)\right]\prod_{j=1}^p{\int_{S_\eps(y_j)}f_{k,x}(z) \,\d z}\right]^{1/p}} \ .
\end{equation}
Analogously, expression \eqref{eq:crit_2} from Theorem \ref{thm:criterion} reads
\begin{equation}\label{eq:second}
    \eps^{\gamma} \, 2^{dn}\sup_{x\in\Lambda_n \cap K}{\left[\sum_{y_1,\dots,y_p\in U^\eps} {\E\left[\Phi_\eps(y_1/\eps)\cdots\Phi_\eps(y_p/\eps)\right]}\prod_{j=1}^p{\int_{S_\eps(y_j)}{g_{n,x}^{(i)}(z)} \,\d z}\right]^{1/p}} \ .
\end{equation}

Choose $\mathcal K = (K_n,n)_{n\in\N}$ with
\[
    K_n = \big\{x\in\R^d \mid \dist{(x,U^c)}\geq(2+\delta)R2^{-n}\big\} \ ,
\]
for some $\delta > 0$ and $R$ such that \eqref{eq:support} holds. Let us first consider \eqref{eq:second}. Given that $\supp{g^{(i)}\big(2^n(\cdot-x)\big)} \subset B_x(R2^{-n})$ we can restrict the sum over $y_j$ to the set
\[
    \Omega_{n,x} = \left\{y\in U^\eps \mid \d(y,x)<2^{-n}R+\eps\sqrt{d}/2\right\} \ .
\]
We now bound \eqref{eq:second} separately for the cases $2^n \geq R\eps^{-1}$ and $2^n < R\eps^{-1}$. If $2^n \geq R\eps^{-1}$, we have
\begin{multline*}
    \sum_{y_1,\dots,y_p\in U^\eps} {\E\left[\Phi_\eps(y_1/\eps)\cdots\Phi_\eps(y_p/\eps)\right]}\prod_{j=1}^p{\int_{S_\eps(y_j)}g_{n,x}^{(i)}(z) \,\d z} \leq \\
    \leq \sum_{y_1,\dots,y_p\in \Omega_{n,x}} {\E\left[\Phi_\eps(y_1/\eps)\cdots\Phi_\eps(y_p/\eps)\right]}\prod_{j=1}^p{\int_{S_\eps(y_j)}g_{n,x}^{(i)}(z) \,\d z} \ .
\end{multline*}
The sum over $\Omega_{n,x}$ can be bounded by a sum over a finite amount of points independent of $n$, since under the condition $2^n \geq R\eps^{-1}$ the set $\Omega_{x,n}$ has at most $3^d$ points for any $x$, $\eps$ and $n$. Let us show that the sum of these expectations is uniformly bounded by a constant.

Looking at expression \eqref{eq:cumulants2} we observe the following: any given partition $\pi\in\Pi([p])$ with no singletons can be expressed as $\pi = \{B_1,\dots,B_\ell\}$, with $1\leq \ell\leq p$ such that $\sum_{1\leq i\leq \ell} n_i = p$, with $n_i \coloneqq \abs{B_i}$. Then the cumulant corresponding to any given $B_i$ (see Corollary \ref{cor:cumulants}) is proportional to a sum over $\sigma \in S^0_\mathrm{cycl}(B_i)$ and $\eta:B_i\to\mathcal E$ of terms of the form
\[
    \prod_{j\in B_i} \nabla_{\eta(j)}^{(1)} \nabla_{\eta(\sigma(j))}^{(2)} G_{U_\eps}\big(y_j, y_{\sigma(j)}\big) \ .
\]
Using \eqref{eq:two_durre} we can bound this expression (up to a constant) by
\[
    \prod_{j\in B_i} \min{\left\{\big|y_{j} - y_{\sigma(j)}\big|^{-d},\,1\right\}} \ ,
\]
where the minimum takes care of the case in which the set $\{y_j:j\in B_i\}$ has repeated values, so that $y_j=y_{\sigma(j)}$ for some $j\in B_i$ and some $\sigma$. So we have that
\begin{equation}\label{eq:bound_sum_moments}
    \E\left[\Phi_\eps(y_1)\cdots\Phi_\eps(y_p)\right] \lesssim_{K_n} \sum_{\pi\in\Pi([p])} \prod_{B\in\pi} c(|B|) \prod_{j\in B} \min{\left\{\big|y_{j} - y_{\sigma(j)}\big|^{-d},\,1\right\}}
\end{equation}
for some constant $c(|B|)$ depending on $B$ that accounts for the sum over $\sigma\in S^0_\text{cycl}(B)$ and over $\eta:B\to\mathcal E$. Since $|y_i-y_j|\geq1$ for any $y_i,y_j\in \Omega_{n,x}$ and any $n$ and $x$, \eqref{eq:bound_sum_moments} is bounded by a constant depending only on $p$, so that
\[
    \sum_{y_1,\dots,y_p\in \Omega_{n,x}} \E\left[\Phi_\eps(y_1)\cdots\Phi_\eps(y_p)\right] \lesssim_{K_n} 3^d p\sum_{\pi\in\Pi([p])} \prod_{B\in\pi} c(|B|) \prod_{j\in B}  \lesssim_{K_n} 1 \ ,
\]
since $|\Omega_{n,x}| \leq 3^d p$ for all $n$ and $x$.


On the other hand, using the fact that
\[
    \int_{S_\eps(y_j)} \abs{g_{n,x}^{(i)}(z)} \,\d z \lesssim 2^{-dn}
\]
we obtain
\[
    \sum_{y_1,\dots,y_p\in \Omega_{n,x}} {\E\left[\Phi_\eps(y_1/\eps)\cdots\Phi_\eps(y_p/\eps)\right]}\prod_{j=1}^p{\int_{S_\eps(y_j)}{g_{n,x}^{(i)}(z) \,\d z}} \lesssim_{K_n} 2^{-dpn} \ ,
\]
which gives the bound
\[
    \eps^{\gamma} \, 2^{dn}\sup_{x\in\Lambda_n \cap K}{\left[\sum_{y_1,\dots,y_p\in U^\eps} {\E\left[\Phi_\eps(y_1/\eps)\cdots\Phi_\eps(y_p/\eps)\right]}\prod_{j=1}^p{\int_{S_\eps(y_j)}{g_{n,x}^{(i)}(z) \,\d z}}\right]^{1/p}} \lesssim_{K_n} \eps^{\gamma} \ .
\]
Observe that Theorem \ref{thm:criterion} allows the constant $c$ to depend on $K=K_n$, so the symbol $\lesssim_{K_n}$ is not an issue. Then, for any $\gamma\leq0$ we can bound the above expression by a constant multiple of $ 2^{-\gamma n}$. 
On the other hand, if $2^n < R\eps^{-1}$, we have
\[
    \int_{S_\eps(y_j)}\abs{g_{n,x}^{(i)}(z)}\d z \lesssim \eps^d \ .
\]
We also note that
\[
    \Omega_{n,x} \subset S_{\eps,x} \coloneqq \left[x - 2R2^{-n},x + 2R2^{-n}\right]^d \cap \eps\Z^d \ .
\]
Using this and calling $N\coloneqq\lfloor2R2^{-n}\eps^{-1}\rfloor$, we obtain
\[
    \sum_{y_1,\dots,y_p\in \Omega_{n,x}} {\E\left[\Phi_\eps(y_1/\eps)\cdots\Phi_\eps(y_p/\eps)\right]} \leq \sum_{y_1,\dots,y_p\in \llbracket-N,N\rrbracket^d} {\E\left[\Phi_\eps(y_1)\cdots\Phi_\eps(y_p)\right]} \ .
\]
Let us first study the behaviour of this expression for $p=2$. By Corollary \ref{cor:covariance_bound} we get
\begin{multline*}
    \sum_{y_1,y_2\in \llbracket-N,N\rrbracket^d} {\E\left[\Phi_\eps(y_1)\Phi_\eps(y_2)\right]} \lesssim_{K_n} \sum_{\substack{y_1,y_2\in \llbracket-N,N\rrbracket^d\\\mathstrut y_1 = y_2}} 1 \,+ \sum_{\substack{y_1,y_2\in \llbracket-N,N\rrbracket^d\\\mathstrut y_1 \neq y_2}} \frac{1}{\left|y_1-y_2\right|^{2d}} \\ \lesssim N^d + \sum_{y_1 \in \llbracket-N,N\rrbracket^d} \int_1^{2\sqrt2N}\frac{r^{d-1}}{r^{2d}}\d r
    = N^d + \sum_{y_1 \in \llbracket-N,N\rrbracket^d} \frac1d \left(1 - 2^{-3d/2} N^{-d}\right) \lesssim N^d \ .
\end{multline*}
Let us now analyze $\E\left[\Phi_\eps(y_1)\cdots\Phi_\eps(y_p)\right]$ for an arbitrary $p$. In the same spirit as the case $2^n \geq R \eps^{-1}$, by expression~\eqref{eq:bound_sum_moments} we know that
\begin{multline*}
    \sum_{y_1,\dots,y_p\in \llbracket-N,N\rrbracket^d} \E\left[\Phi_\eps(y_1)\cdots\Phi_\eps(y_p)\right] \lesssim_{K_n}\\ \sum_{\pi\in\Pi([p])} \prod_{B\in\pi} c(|B|) \sum_{y_1,\dots,y_p\in \llbracket-N,N\rrbracket^d} \prod_{j\in B} \min{\left\{\big|y_{j} - y_{\sigma(j)}\big|^{-d},\,1\right\}} \ .
\end{multline*}
Using Lemma~\ref{lemma:sum_p_point_function} we get
\[
    \sum_{y_1,\dots,y_p\in \llbracket-N,N\rrbracket^d} \prod_{j\in B} \min{\left\{\big|y_{j} - y_{\sigma(j)}\big|^{-d},\,1\right\}} \lesssim_{K_n} N^{\frac{n_i}{2}+d-1}
\]
by identifying $\eps$ with $1/N$. So we arrive to
\[
    \sum_{y_1,\dots,y_p\in \llbracket-N,N\rrbracket^d} \E\left[\Phi_\eps(y_1)\cdots\Phi_\eps(y_p)\right] \lesssim_{K_n} \sum_{\pi\in\Pi([p])} \prod_{B\in\pi} c(|B|) N^{\frac{n_i}{2}+d-1} \ .
\]
Now we use that
\[
    \prod_{B\in\pi} N^{\frac{n_i}{2}+d-1} = N^{(d-1)|\pi|+\frac{p}{2}} \ ,
\]
and since the sum takes place over partitions of the set $[p]$ with no singletons, putting everything back into~\eqref{eq:cumulants2} we see that the term with the largest value of $\abs{\pi}$ will dominate for large $N$. For $p$ even this happens when $\pi$ is composed of cycles of two elements, in which case $\abs{\pi} = p/2$. Hence,
\[
    \sum_{y_1,\dots,y_p \in \llbracket-N,N\rrbracket^d} \E\left[\Phi_\eps(y_1)\cdots\Phi_\eps(y_p)\right] \lesssim_{K_n} N^\frac{dp}{2}
\]
for $p$ even. Finally,
\[
    \eps^{\gamma} \, 2^{dn}\sup_{x\in\Lambda_n \cap K_n}{\left[\sum_{y_1,\dots,y_p\in U^\eps} {\E\left[\Phi_\eps(y_1/\eps)\cdots\Phi_\eps(y_p/\eps)\right]}\prod_{j=1}^p{\int_{S_\eps(y_j)}{g_{n,x}^{(i)}(z) \,\d z}}\right]^{1/p}} \lesssim_{K_n} 2^\frac{dn}{2} \eps^{\gamma+\frac{d}{2}} \ .
\]
If $\gamma \geq -d/2$ then we can bound the above expression by a constant multiple of $ 2^{\frac{dn}{2}} 2^{-\left(\gamma+\frac{d}{2}\right)n} = 2^{-\gamma n}$. Otherwise, we cannot bound it uniformly in $\eps$, as the bound depends increasingly on $\eps$ as it approaches 0.

Now we need to obtain similar bounds for \eqref{eq:crit_1}, which applied to our case takes the expression given in \eqref{eq:first}. For the case $2^n \geq R\eps^{-1}$ we have
\[
    \eps^{\gamma} \sup_{x\in\Lambda_n \cap K_n}{\left[\sum_{y_1,\dots,y_p\in U^\eps} {\E\left[\Phi_\eps(y_1/\eps)\cdots\Phi_\eps(y_p/\eps)\right]}\prod_{j=1}^p{\int_{S_\eps(y_j)}{f_{k,x}(z) \,\d z}}\right]^{1/p}} \lesssim \eps^\gamma \,2^{-dn} < \eps^{\gamma+d} \ ,
\]
which is bounded by some $c = c(K_n,n)$ whenever $\gamma \geq -d$. If $2^n < R\eps^{-1}$ instead we get
\[
    \eps^{\gamma} \sup_{x\in\Lambda_n \cap K_n}{\left[\sum_{y_1,\dots,y_p\in U^\eps} {\E\left[\Phi_\eps(y_1/\eps)\cdots\Phi_\eps(y_p/\eps)\right]}\prod_{j=1}^p{\int_{S_\eps(y_j)}{f_{k,x}(z) \,\d z}}\right]^{1/p}} \lesssim_{K_n} \eps^{\gamma+\frac{d}{2}} \,2^{-\frac{dn}{2}} \ .
\]
As before, only if $\gamma \geq -{d}/{2}$ we have the required bound.

Theorem \ref{thm:criterion} now implies that under scaling $\eps^{-d/2}$ the family $(\Phi_\eps)_{\eps>0}$ is tight in $\mathcal B_{p,q}^{\alpha,\mathrm{loc}}(U)$ for any $\alpha < -{d}/{2}$, any $q\in[1,\infty]$ and any $p \geq 2$ and even. Using Lemma \ref{lemma:embedding} this holds for any $p \in [1,\infty]$. This way, the family is also tight in $\mathcal C^\alpha_\mathrm{loc}(U)$ for every $\alpha < -{d}/{2}$.
\end{proof}

\begin{remark}
    Observe that the scaling $\eps^{-d}$ (the one used for the joint moments in Theorem \ref{thm:cumulants}) is outside the range of $\gamma$ required for the tightness bounds, and therefore it will give a trivial scaling.
\end{remark}

\subsubsection{Proof of Proposition \ref{prop:convWN}}

The proof of this proposition will be divided into three parts. Firstly, we will determine the normalizing constant $\chi$ and show that it is well-defined, in the sense that it is a strictly positive finite constant. Secondly, recalling Definition~\ref{def:cum_mult}, we will demonstrate that the $n$-th cumulant $\kappa_n(\langle \Phi_{\eps}, f\rangle)$ of each random variable $\langle \Phi_{\eps}, f\rangle$, $f\in L^2(U)$, vanishes for $n\geq 3$. Finally we show that the second cumulant $\kappa_2(\langle \Phi_{\eps}, f\rangle, \langle \Phi_{\eps}, g\rangle)$, $g\in L^2(U)$, which is equal to the covariance, converges to the appropriate one corresponding to that of white noise. Once we have this, we can show that any collection $\left(\langle \Phi_{\eps}, f_1\rangle,\dots,\langle \Phi_{\eps}, f_k\rangle\right)$, $k\in\N$, is a Gaussian vector. To see this it suffices to take any linear combination $f=\sum_{i\in[k]}\alpha_i \langle \Phi_{\eps}, f_i\rangle$, $\alpha_i\in\R$ for all $i\in[k]$ so that, by multilinearity, all the cumulants $\kappa_n\left(\langle \Phi_{\eps}, f\rangle\right)$ converge to those of a centered normal with variance $\int_U f(x)^2\d x$. The ideas are partially inspired from \citet[Section 3.6]{durrethesis}.

For the rest of this Subsection we will work with test functions $f\in \mathcal C_c^\infty(U)$. The lifting of the results to every $f\in L^2(U)$ follows by a standard density argument~\cite[Chapter 1, Section 3]{janson}. Let us first derive a convenient representation of the action $\langle \Phi_{\eps}, f\rangle$ defined in Equation \eqref{def:action}. More precisely, defining $\langle \Phi_{\eps}, f\rangle_S$ as
\[
    \langle \Phi_{\eps}, f\rangle_S \coloneqq \sum_{v\in U_\eps} {f(\eps v)\Phi_\eps(v)} \ ,
\]
for any test function $f\in \mathcal C^{\infty}_c(U)$ we can write
\[
\begin{split}
\langle \Phi_{\eps}, f\rangle & = \eps^d \langle \Phi_{\eps}, f\rangle_S + R_{\eps}(f) \ ,
\end{split}
\]
where $R_{\eps}(f)$ denotes the reminder term that goes to $0$ in $L^2$, as we show in the next lemma.

\begin{lemma}
    Let $U\subset \R^d$, $d\geq 2$. For any test function $f\in\mathcal C^\infty_c(U)$ as $\eps\to0$ it holds that
    \begin{equation}\label{eq:difference_actions}
        \abs{R_{\eps}(f)} \xrightarrow{L^2} 0 \ .
    \end{equation}
\end{lemma}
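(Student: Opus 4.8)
The plan is to compare the two pairings cube by cube and then control the resulting error in $L^2$ by means of the covariance estimate of Corollary~\ref{cor:covariance_bound}. First I would observe that, by the definition of the floor function, $\Phi_\eps(\floor{x/\eps})$ is constant and equal to $\Phi_\eps(v)$ on the cube $Q_\eps(v) \coloneqq \eps v + [0,\eps)^d$, since $\floor{x/\eps}=v$ exactly when $x\in Q_\eps(v)$. Splitting the integral in~\eqref{def:action} over these cubes gives
\[
    \langle\Phi_\eps,f\rangle = \sum_{v\in U_\eps} \Phi_\eps(v)\int_{Q_\eps(v)\cap U} f(x)\,\d x \ ,
\]
so that, comparing with $\eps^d\langle\Phi_\eps,f\rangle_S = \eps^d\sum_{v}f(\eps v)\Phi_\eps(v)$,
\[
    R_\eps(f) = \sum_{v\in U_\eps}\Phi_\eps(v)\,\delta_v \ , \qquad \delta_v \coloneqq \int_{Q_\eps(v)\cap U} f(x)\,\d x - \eps^d f(\eps v) \ .
\]
Each $\delta_v$ is thus a single-cube Riemann-sum error of side $\eps$.

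Next I would estimate $\delta_v$ using the smoothness of $f$. Since $f\in\mathcal C_c^\infty(U)$, its support $K\coloneqq\supp f$ is compact with $\dist(K,\partial U)>0$; hence for all sufficiently small $\eps$ every cube $Q_\eps(v)$ that meets $K$ lies entirely inside $U$, and the number of indices $v$ with $\delta_v\neq 0$ is $\mathcal O(\eps^{-d})$ (those $v$ for which $\eps v$ lies in an $\eps\sqrt d$-neighbourhood of $K$). For such a $v$ a first-order Taylor expansion of $f$ about the corner $\eps v$ gives
\[
    \abs{\delta_v} = \abs{\int_{[0,\eps)^d}\big(f(\eps v + y)-f(\eps v)\big)\,\d y} \lesssim \eps^{d+1}\,\|\nabla f\|_\infty \ .
\]

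Finally I would compute the second moment. Expanding the square and using that $\E[\Phi_\eps(v)\Phi_\eps(w)]$ is nonnegative (it is a sum of squares, cf. the two-point formula),
\[
    \E\big[R_\eps(f)^2\big] = \sum_{v,w\in U_\eps}\delta_v\,\delta_w\,\E\big[\Phi_\eps(v)\Phi_\eps(w)\big] \ ,
\]
and I would invoke Corollary~\ref{cor:covariance_bound} with $D$ a fixed compact neighbourhood of $K$ satisfying $\dist(D,\partial U)>0$ (so that $v,w\in D_\eps$ for small $\eps$), bounding $\E[\Phi_\eps(v)\Phi_\eps(w)]$ by $\abs{v-w}^{-2d}$ off the diagonal and by a constant on it. Because $2d>d$, the lattice sum $\sum_{w\neq v}\abs{v-w}^{-2d}$ is bounded uniformly in $v$ and $\eps$, so that $\sum_{v,w}\abs{\E[\Phi_\eps(v)\Phi_\eps(w)]}=\mathcal O(\eps^{-d})$. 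Combining this with $\abs{\delta_v}\lesssim\eps^{d+1}$ yields
\[
    \E\big[R_\eps(f)^2\big] \lesssim \eps^{2(d+1)}\cdot\eps^{-d} = \eps^{d+2} \to 0 \quad (\eps\to0) \ ,
\]
which is the claim. The main obstacle is precisely this last estimate: one must verify that the slow decay of the covariance $\abs{v-w}^{-2d}$ is nonetheless summable (which holds exactly because the exponent $2d$ strictly exceeds the lattice dimension $d$) and keep careful track of the $\mathcal O(\eps^{-d})$ contributing cubes, so that the per-cube gain $\eps^{d+1}$ coming from the smoothness of $f$ overcomes the growth of the covariance sum.
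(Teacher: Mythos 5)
Your proof is correct, and it starts from the same cube-by-cube decomposition as the paper's (the paper writes $\langle\Phi_\eps,f\rangle=\sum_{x\in U_\eps}\Phi_\eps(x)\int_{A_x}f$ with $A_x$ exactly your $Q_\eps(v)\cap U$), but the two error controls genuinely differ. The paper keeps the per-cube error qualitative: it invokes the Lebesgue differentiation theorem to get $\mathcal I(x)=\frac{1}{|A_x|}\int_{A_x}f-f(\eps x)=o(1)$, and then applies Cauchy--Schwarz to separate the random part from the deterministic one, bounding $\E\big[R_\eps(f)^2\big]\le\eps^{2d}\,\E\big[\sum_x\Phi_\eps^2(x)\big]\big(\sum_x\mathcal I^2(x)\big)$; only the diagonal bound $\E[\Phi_\eps^2(x)]\lesssim 1$ from Corollary~\ref{cor:covariance_bound} is needed, and the conclusion is $o(1)$ with no rate. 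You instead exploit the smoothness of $f$ quantitatively, getting $\abs{\delta_v}\lesssim\eps^{d+1}\norm{\nabla f}_\infty$ from a Taylor/Lipschitz bound, and then expand $\E[R_\eps(f)^2]$ directly, using the full off-diagonal decay $\abs{v-w}^{-2d}$ (summable since $2d>d$) to control the double sum. Your route buys an explicit rate $\E[R_\eps(f)^2]=\mathcal O(\eps^{d+2})$ and avoids any appeal to differentiation theory, at the cost of using one derivative of $f$ (harmless here, since $f\in\mathcal C_c^\infty$); the paper's Cauchy--Schwarz route is slightly more robust in that it would survive with merely continuous $f$, where one only has uniform continuity and hence $\sup_x\abs{\mathcal I(x)}=o(1)$. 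Both arguments are complete; the only cosmetic point in yours is that the nonnegativity of $\E[\Phi_\eps(v)\Phi_\eps(w)]$ is not actually needed once you pass to absolute values.
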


\begin{proof}
Observe that
\[
    \langle\Phi_\eps,f\rangle = \int_U \Phi_\eps\left(\floor{x/\eps}\right)f(x) \d x = \sum_{x \in U_\eps}\Phi_\eps(x) \int_{A_x} f(y) \d y \ ,
\]
where $A_x \coloneqq \left\{a\in U : \floor{a/\eps}=x\right\}$. It is easy to see that $\abs{A_x} \leq \eps^d$, and given that the support of $f$ is compact and strictly contained in $U$, for $\eps$ sufficiently small (depending on $f$), the distance between this support and the boundary $\partial U$ will be larger than $\sqrt{d}\eps$. So there is no loss of generality if we assume that~$\abs{A_x} = \eps^d$.

Now, we can rewrite \eqref{eq:difference_actions} as
\begin{equation}\label{eq:I_factor}
    \abs{\sum_{x\in U_\eps}\Phi_\eps(x) \left(\int_{A_x} f(y) \d y - \eps^d \, f(\eps x)\right)} = \abs{\sum_{x\in U_\eps}\eps^d \, \Phi_\eps(x) \left(\frac{1}{|A_x|}\int_{A_x} f(y) \d y - \, f(\eps x)\right)} \ .
\end{equation}
Let us call $\mathcal I(x)$ the term
\[
\mathcal I(x) \coloneqq \frac{1}{|A_x|}\int_{A_x} f(y) \d y - \, f(\eps x) \ .
\]
The set $A_x$ is not a Euclidean ball, but it has bounded eccentricity (see \citet[Corollary 1.7]{stein2009real}). Therefore we can apply the Lebesgue differentiation theorem to claim that $\mathcal I(x)$ will be of order $o(1)$, where the rate of convergence possibly depends on $x$ and $f$.

To see statement \eqref{eq:difference_actions}, we square the expression in~\eqref{eq:I_factor} and take its expectation, obtaining
\begin{equation}\label{eq:help_eq}
    \E\left[\bigg|\sum_{x\in U_\eps}\eps^d \, \Phi_\eps(x) \,\mathcal I(x)\bigg|^2\right] \leq \eps^{2d}\, \E\left[\sum_{x\in U_\eps}\Phi_\eps^2(x)\right]\left(\sum_{x\in U_\eps}\mathcal I^2(x)\right)
\end{equation}
where we used the Cauchy-Schwarz inequality. By Corollary \ref{cor:covariance_bound} the expectation on the right-hand side can be bounded as
\[
    \E\left[\sum_{x\in U_\eps}\Phi_\eps^2(x)\right] \lesssim \sum_{x\in U_\eps} 1 = \mathcal O\big(\eps^{-d}\big)
\]
while the second term in~\eqref{eq:help_eq} is of order $o(\eps^{-d})$. With the outer factor $\eps^{2d}$~\eqref{eq:help_eq} goes to $0$, as we wanted to show.
\end{proof}

Let us remark that, by the previous lemma, proving finite-dimensional convergence of $\left\{\frac{\eps^{-d/2}}{\sqrt{\chi}}\langle \Phi_\eps,f_p\rangle:\,p\in[m]\right\}$ will be equivalent to proving finite-dimensional convergence of $\left\{\frac{\eps^{d/2}}{\sqrt{\chi}}\langle \Phi_\eps,f_p\rangle_S:\,p\in[m]\right\}$.

\subsubsection*{Definition of $\chi$}

\begin{lemma}\label{lemma:chi}
Let $G_0(\cdot, \cdot)$ be the Green's function on $\mathbb{Z}^d$ defined in Section \ref{sec:GFF}. The constant
    \[
        \chi \coloneqq 2\sum_{v\in\Z^d} \sum_{i,j\in[d]} \left(\nabla_i^{(1)}\nabla_j^{(2)}G_0(0,v)\right)^2
    \]
    is well-defined. In particular $\chi \in (8,+\infty)$.
\end{lemma}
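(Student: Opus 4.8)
The plan is to evaluate the sum defining $\chi$ \emph{exactly} by passing to Fourier space on the torus $T^d=[-\pi,\pi]^d$; this will simultaneously deliver finiteness and the stated lower bound. Recall that the Fourier symbol of the normalized Laplacian $-\Delta$ is
\[
    \lambda(\theta) = \frac1d\sum_{k=1}^d\big(1-\cos\theta_k\big) = \frac{2}{d}\sum_{k=1}^d\sin^2\!\big(\theta_k/2\big) \ ,
\]
so that $G_0(0,v)=(2\pi)^{-d}\int_{T^d} e^{\mathrm{i}\theta\cdot v}\,\lambda(\theta)^{-1}\,\mathrm d\theta$, understood in the potential-kernel sense when $d=2$. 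First I would establish the Fourier representation of the relevant double differences. Writing $a_{ij}(v)\coloneqq\nabla_i^{(1)}\nabla_j^{(2)}G_0(0,v)$, expanding the four differences and factoring gives
\[
    a_{ij}(v) = \frac{1}{(2\pi)^d}\int_{T^d} e^{\mathrm{i}\theta\cdot v}\,\frac{\big(e^{-\mathrm{i}\theta_i}-1\big)\big(e^{\mathrm{i}\theta_j}-1\big)}{\lambda(\theta)}\,\mathrm d\theta \ .
\]

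The key point to verify carefully is that the integrand genuinely belongs to $L^2(T^d)$: near $\theta=0$ the numerator vanishes like $\theta_i\theta_j$ while $\lambda(\theta)\sim|\theta|^2/(2d)$, so the quotient stays bounded there, and this is precisely what tames the non-summable (logarithmic) behaviour of $G_0$ in $d=2$. Hence $a_{ij}\in\ell^2(\Z^d)$ with the displayed expression as its inverse Fourier transform. Square-summability, and thus $\chi<\infty$ on its own, can alternatively be read off directly from the decay $|a_{ij}(v)|=\mathcal O(|v|^{-d})$ supplied by Lemma~\ref{lemma:lemma_29_durre}, since $\sum_{v}|v|^{-2d}<\infty$ for $d\geq2$.

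Next I would apply Parseval's identity and sum over the directions. For fixed $i,j$, using $|e^{\mathrm{i}\theta_k}-1|^2=4\sin^2(\theta_k/2)$,
\[
    \sum_{v\in\Z^d} a_{ij}(v)^2 = \frac{1}{(2\pi)^d}\int_{T^d}\frac{16\,\sin^2(\theta_i/2)\,\sin^2(\theta_j/2)}{\lambda(\theta)^2}\,\mathrm d\theta \ .
\]
Summing over $i,j\in[d]$ factorizes the numerator as $16\big(\sum_i\sin^2(\theta_i/2)\big)\big(\sum_j\sin^2(\theta_j/2)\big)$, and the identity $\sum_k\sin^2(\theta_k/2)=\tfrac d2\lambda(\theta)$ collapses this to $16\cdot\tfrac{d^2}{4}\lambda(\theta)^2=4d^2\lambda(\theta)^2$. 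The factors $\lambda(\theta)^2$ cancel, leaving a constant integrand, so that $\sum_{i,j}\sum_v a_{ij}(v)^2=(2\pi)^{-d}\int_{T^d}4d^2\,\mathrm d\theta=4d^2$, whence $\chi=8d^2$. As $d\geq2$ this yields $\chi=8d^2\geq 32$, which indeed lies in $(8,+\infty)$.

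The main obstacle is the justification of the Fourier representation and of Parseval's identity in $d=2$, where $G_0$ is only the potential kernel and is itself not square-summable; the whole argument hinges on the vanishing of $(e^{-\mathrm{i}\theta_i}-1)(e^{\mathrm{i}\theta_j}-1)$ at the origin cancelling the singularity of $\lambda^{-1}$. Once this integrability is secured (either directly, or by invoking the $\mathcal O(|v|^{-d})$ decay of Lemma~\ref{lemma:lemma_29_durre}), the remainder is the short trigonometric simplification above. If one wishes to avoid computing the exact value, the weaker claim $\chi>8$ follows already by retaining only finitely many of the nonnegative terms, but the Parseval computation is cleaner and produces the sharp constant $\chi=8d^2$.
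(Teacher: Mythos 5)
Your proof is correct, but it takes a genuinely different route from the paper's. The paper argues by brute force on the lattice: the upper bound $\chi<\infty$ comes from the $\mathcal O(|v|^{-d})$ decay of $\nabla_i^{(1)}\nabla_j^{(2)}G_0(0,v)$ (Lemma~\ref{lemma:lemma_29_durre}) together with $\sum_{v\neq 0}|v|^{-2d}<\infty$, and the lower bound comes from evaluating a single term of the nonnegative sum, namely $v=0$, $i=j=1$, where $\nabla_1^{(1)}\nabla_1^{(2)}G_0(0,0)=2\big(G_0(0,0)-G_0(e_1,0)\big)=2$ follows directly from $\Delta G_0(\cdot,0)=-\delta_0$ and lattice symmetry, giving the contribution $8$. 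You instead compute the entire sum in closed form via Parseval on the torus: your multiplier identity $\widehat{a_{ij}}(\theta)=(e^{-\mathrm{i}\theta_i}-1)(e^{\mathrm{i}\theta_j}-1)/\lambda(\theta)$ is right (with the paper's normalization $\lambda(\theta)=\tfrac1d\sum_k(1-\cos\theta_k)$), the cancellation $\sum_{i,j}16\sin^2(\theta_i/2)\sin^2(\theta_j/2)=4d^2\lambda(\theta)^2$ checks out, and the conclusion $\chi=8d^2$ is consistent with, and strictly stronger than, the statement $\chi\in(8,+\infty)$; it also agrees with the paper's lower bound, since the single term the paper isolates contributes exactly $8$ out of your total $8d^2$. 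The one step you rightly flag as needing care, identifying the $L^2(T^d)$ function $(e^{-\mathrm{i}\theta_i}-1)(e^{\mathrm{i}\theta_j}-1)/\lambda(\theta)$ as the Fourier transform of the double difference of the potential kernel when $d=2$, is standard (e.g., via the representation $a(v)=(2\pi)^{-2}\int_{T^2}(1-\cos(\theta\cdot v))\lambda(\theta)^{-1}\,\mathrm d\theta$ and taking differences under the integral), but it does have to be said; alternatively, as you note, square-summability alone already follows from Lemma~\ref{lemma:lemma_29_durre}. In short: the paper's proof is shorter and needs only crude asymptotics, while yours costs a Fourier-analytic justification but buys the exact constant $\chi=8d^2$, which the paper's method cannot produce.
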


\begin{proof}
Let us define $\kappa_0$ as
\begin{equation}\label{eq:kappa_0}
    \kappa_0(v,w) \coloneqq 2\sum_{i,j\in[d]} \left(\nabla_i^{(1)}\nabla_j^{(2)}G_0(v,w)\right)^2 \ .
\end{equation}
By translation invariance we notice that $\kappa_0(v,w) = \kappa_0(0,w-v)$. Moreover, using Lemma~\ref{lemma:lemma_29_durre}, we have that as $|v|\to+\infty$
\[
    \kappa_0(0,v)\lesssim |v|^{-2d}
\]
so that we can bound $\chi$ from above by
\[
    \chi = \sum_{v\in\Z^d} \kappa_0(0,v) \lesssim \,1 + \sum_{v\in\Z^d\setminus\{0\}} |v|^{-2d} < +\infty \ .
\]

For the lower bound, since $\kappa_0(0,v) \geq 0 $ for all $ v\in\Z^d$ we can take $v = 0$. Choosing the differentiation directions $i=j=1$ in \eqref{eq:kappa_0} we get the term $2\big(\nabla_1^{(1)}\nabla_1^{(2)}G_0(0,0)\big)^2$, which can be expressed as $8\left(G_0(0,0)-G_0(e_1,0)\right)^2$ using translation and rotation invariance of $G_0$. Now, by definition
\[
    \Delta G_0(0,0) = \frac{1}{2d} \sum_{x\in\Z^d:|x|=1}\left(G_0(x,0)-G_0(0,0)\right) = -1 \ ,
\]
from which $G_0(0,0) - G_0(e_1,0) = 1$. This implies that $\chi \geq 8$, and the lemma follows.
\end{proof}

\subsubsection*{Vanishing cumulants $\kappa_n$ for $n\geq 3$}

\begin{lemma}
For $n\geq3$, $f\in \mathcal C_c^\infty(U)$, the cumulants $\kappa_n \left(\eps^{d/2}\langle \Phi_\eps,f\rangle_S\right)$ go to $0$ as $\eps\to0$.
\end{lemma}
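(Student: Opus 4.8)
The plan is to combine the exact cumulant formula of Corollary~\ref{cor:cumulants} with the pointwise bound \eqref{eq:two_durre} and the summation estimate of Lemma~\ref{lemma:sum_p_point_function}, and then to keep track of the resulting power of $\eps$. First I would use that cumulants are homogeneous of degree $n$, so that $\kappa_n\!\left(\eps^{d/2}\langle\Phi_\eps,f\rangle_S\right) = \eps^{dn/2}\,\kappa_n\!\left(\langle\Phi_\eps,f\rangle_S\right)$, and then expand the diagonal cumulant $\kappa_n(Y)=\kappa(Y,\dots,Y)$ ($n$ copies) by multilinearity of the joint cumulants:
\[
    \kappa_n\!\left(\langle\Phi_\eps,f\rangle_S\right) = \sum_{v_1,\dots,v_n\in U_\eps} \left(\prod_{i=1}^n f(\eps v_i)\right) \kappa\big(\Phi_\eps(v_1),\dots,\Phi_\eps(v_n)\big) \ .
\]
Since $f\in\mathcal C_c^\infty(U)$ has compact support $D$ with $\dist{(D,\partial U)}>0$, for $\eps$ small the sum effectively runs over $v_i\in D_\eps$, so that both Lemma~\ref{lemma:bound_2_points_durre} and Lemma~\ref{lemma:sum_p_point_function} apply. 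Substituting the exact expression \eqref{eq:cumulants3} writes the inner cumulant as a finite sum, over full cycles $\sigma\in S_\mathrm{cycl}^0([n])$ and assignments $\eta\colon[n]\to\mathcal E$, of products $\prod_{j=1}^n \nabla^{(1)}_{\eta(j)}\nabla^{(2)}_{\eta(\sigma(j))}G_{U_\eps}(v_j,v_{\sigma(j)})$.

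Next I would apply the triangle inequality together with \eqref{eq:two_durre}, which bounds each factor by $\min\{|v_j-v_{\sigma(j)}|^{-d},1\}$. Because $\sigma$ is a single $n$-cycle without fixed points, after relabelling along the cycle the product $\prod_{j}|v_j-v_{\sigma(j)}|^{-d}$ becomes exactly the cyclic product of Lemma~\ref{lemma:sum_p_point_function}. The number of pairs $(\sigma,\eta)$ is a finite constant depending only on $n$ and $d$, so bounding every term uniformly gives
\[
    \left|\kappa_n\!\left(\langle\Phi_\eps,f\rangle_S\right)\right| \lesssim_{D,n} \sum_{\substack{v_1,\dots,v_n\in D_\eps\\ v_i\neq v_j,\ i\neq j}} \left(\prod_{i=1}^{n-1}\frac{1}{|v_i-v_{i+1}|^d}\right)\frac{1}{|v_n-v_1|^d} \;+\; (\text{coincident configurations}) \ .
\]
Lemma~\ref{lemma:sum_p_point_function} bounds the distinct-point cyclic sum by $\mathcal O_D\big(\eps^{-n/2-d+1}\big)$. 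Multiplying by the prefactor $\eps^{dn/2}$ yields $\mathcal O\big(\eps^{(d-1)(n-2)/2}\big)$, since $\tfrac{dn}{2}-\tfrac{n}{2}-d+1=\tfrac{(d-1)(n-2)}{2}$, and this exponent is strictly positive for every $n\geq3$ and $d\geq2$. Hence the distinct-point contribution vanishes as $\eps\to0$.

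The step I expect to require the most care is the treatment of the configurations in which some of the $v_i$ coincide: there the factor $1$ in \eqref{eq:two_durre} replaces a decaying factor and Lemma~\ref{lemma:sum_p_point_function} does not apply directly. I would handle these by partitioning the summation according to the equivalence classes of equal indices. Each coincidence removes a free summation variable (worth a factor $\eps^{-d}$ of lattice volume) while merging two factors of the cyclic chain, so the resulting sum is governed by a shorter chain whose remaining off-diagonal factors are summable; a direct comparison (as already visible in the case $n=3$, where the diagonal terms scale like $\eps^{-d}$ against the distinct-point scaling $\eps^{-d-1/2}$) shows that every coincident pattern produces, after rescaling, a power of $\eps$ strictly larger than $(d-1)(n-2)/2$. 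Consequently the distinct-point configurations dominate, the positive exponent $(d-1)(n-2)/2>0$ persists, and $\kappa_n\!\left(\eps^{d/2}\langle\Phi_\eps,f\rangle_S\right)\to0$ for all $n\geq3$.
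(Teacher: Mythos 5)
Your proposal is correct and follows essentially the same route as the paper: homogeneity and multilinearity of cumulants to reduce to the lattice sum, the exact cyclic-permutation formula \eqref{eq:cumulants3} combined with the bound \eqref{eq:two_durre}, and Lemma~\ref{lemma:sum_p_point_function} to obtain the exponent $\tfrac{1}{2}(d-1)(n-2)>0$. The paper disposes of the coincident configurations in the same spirit, by noting that a configuration with $m<n$ distinct points is bounded by $\mathcal O_D\big(\eps^{-m/2-d+1}\big)\lesssim\mathcal O_D\big(\eps^{-n/2-d+1}\big)$, which matches your counting argument.
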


\begin{proof}
Recall that, by the multilinearity of cumulants, for $n\geq2$ the $n$-th cumulant satisfies
\begin{equation}\label{eq:nth_cumulant}
    \kappa_n\left(\eps^{d/2}\langle \Phi_\eps,f\rangle_S\right) = \eps^\frac{nd}{2} \sum_{v_1,\ldots,v_n \in D_\eps} \kappa\left(\Phi_\eps(v_i):i\in[n]\right) \prod_{j=1}^n f(\eps v_j) \ ,
\end{equation}
with $D\coloneqq \supp f$, which is compact inside $U$. The goal now is to show that
\[
    \eps^\frac{nd}{2} \sum_{v_1,\ldots,v_n \in D_\eps} \abs{\kappa\left(\Phi_\eps(v_i):i\in [n]\right)} \ \longrightarrow 0
\]
as $\eps\to 0$.

First, we note from the cumulants expression \eqref{eq:cumulants3} and bound \eqref{eq:two_durre} in Lemma \ref{lemma:bound_2_points_durre} that, for any set $V$ of (possibly repeated) points of $D_\eps$, with $\abs{V} = n$, we have
\[
    \abs{\kappa\left(\Phi_\eps(v):v \in V\right)} \lesssim_{D,n} \sum_{\sigma\in S^0_\mathrm{cycl}(V)}\prod_{v\in V} \min{\left\{\left|v-\sigma(v)\right|^{-d},\,1\right\}} \ .
\]
Using the above expression and Lemma \ref{lemma:sum_p_point_function}, it is immediate to see that, if $V$ has $m$ distinct points with $1\leq m\leq n$,
\[
    \sum_{\substack{v_1,\ldots,v_n \in D_\eps\\\mathstrut\text{$m$ distinct points}}} \abs{\kappa\left(\Phi_\eps(v_i):i\in [n]\right)} = \mathcal{O}_{D,n}\left(\eps^{-\frac{m}{2}-d+1}\right) \lesssim \mathcal{O}_{D,n}\left(\eps^{-\frac{n}{2}-d+1}\right)
\]
so that
\[
    \eps^\frac{nd}{2} \sum_{v_1,\ldots,v_n \in D_\eps} \abs{\kappa\left(\Phi_\eps(v_i):i\in [n]\right)} = \mathcal{O}_{D,n}\left(\eps^{\frac{1}{2}(d-1)(n-2)}\right) \ .
\]
We observe in particular that for $d\geq2$ this expression goes to $0$ for any $n\geq3$. Furthermore, going back to \eqref{eq:nth_cumulant}, since $f$ is uniformly bounded this shows that for $n\geq3$ the cumulants $\kappa_n$ go to $0$ as $\eps\to0$, as we wanted to show.
\end{proof}

\subsubsection*{Covariance structure $\kappa_2$}

\begin{lemma}
For any two functions $f_p,f_q \in \mathcal C^{\infty}_c(U)$, with $p,q\in[m]$ for $m \in\N$, we have
\[
    \eps^d\, \kappa\left(\langle\Phi_\eps,f_p\rangle_S,\langle\Phi_\eps,f_q\rangle_S\right) \xrightarrow{\eps\to0} \chi \int_U f_p(x) f_q(x) \d x \ .
\]
\end{lemma}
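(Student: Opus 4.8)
The plan is to reduce the covariance to a lattice sum against the translation-invariant kernel $\kappa_0$ of \eqref{eq:kappa_0} and then to read that sum off as a Riemann sum weighted by $\kappa_0$. Since $\Phi_\eps$ is centred, bilinearity of the covariance gives
\[
\kappa\left(\langle\Phi_\eps,f_p\rangle_S,\langle\Phi_\eps,f_q\rangle_S\right) = \sum_{v,w\in U_\eps} f_p(\eps v)\,f_q(\eps w)\,\E\left[\Phi_\eps(v)\Phi_\eps(w)\right] \ ,
\]
and the two-point formula from the Remark following Theorem~\ref{thm:cumulants} (equivalently Corollary~\ref{cor:cumulants} with $\ell=2$) lets me write $\E[\Phi_\eps(v)\Phi_\eps(w)]=\kappa_0^\eps(v,w)$, where $\kappa_0^\eps(v,w)\coloneqq 2\sum_{i,j\in[d]}\big(\nabla_i^{(1)}\nabla_j^{(2)}G_{U_\eps}(v,w)\big)^2$ is the finite-volume analogue of $\kappa_0$.

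First I would replace the finite-volume kernel $\kappa_0^\eps$ by the infinite-volume, \emph{translation-invariant} kernel $\kappa_0(v,w)=\kappa_0(0,w-v)$. Factoring $a^2-b^2=(a-b)(a+b)$ and combining the difference bound \eqref{eq:one_durre} (which contributes a factor $\mathcal O(\eps^d)$) with the pointwise bounds \eqref{eq:two_durre} and Lemma~\ref{lemma:lemma_29_durre} (which bound the sum of the two gradients by $\min\{|v-w|^{-d},1\}$) yields $\abs{\kappa_0^\eps(v,w)-\kappa_0(v,w)}\lesssim_D \eps^d\,\min\{|v-w|^{-d},1\}$. Since $f_p,f_q$ are supported in a compact $D\subset U$ with $\dist{(D,\partial U)}>0$, the resulting error in $\eps^d\kappa(\cdots)$ is at most a constant times $\eps^{2d}\sum_{v,w\in D_\eps}\min\{|v-w|^{-d},1\}$; as the inner sum over $w$ is $\mathcal O(\log(1/\eps))$ and $\abs{D_\eps}=\mathcal O(\eps^{-d})$, this error is $\mathcal O(\eps^d\log(1/\eps))\to0$. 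I expect the main obstacle to lie exactly here: the non-translation-invariance of $G_{U_\eps}$ forces this replacement, and one must check that the logarithmic growth of $\sum_w \min\{|v-w|^{-d},1\}$ is absorbed by the scaling factor.

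With $\kappa_0^\eps$ replaced by $\kappa_0(0,w-v)$ I would substitute $u=w-v$, noting that for each $u$ only finitely many terms survive because $f_q(\eps(v+u))\neq0$ forces $\eps(v+u)\in D$, so the constraint $v+u\in U_\eps$ is automatic for small $\eps$. This gives
\[
\eps^d\sum_{v,w\in U_\eps} f_p(\eps v)\,f_q(\eps w)\,\kappa_0(0,w-v) = \sum_{u\in\Z^d}\kappa_0(0,u)\,a_\eps(u) \ , \qquad a_\eps(u)\coloneqq\eps^d\sum_{v\in\Z^d} f_p(\eps v)\,f_q\big(\eps v+\eps u\big) \ .
\]
For each fixed $u$, smoothness of $f_q$ gives $f_q(\eps v+\eps u)=f_q(\eps v)+\mathcal O(\eps\abs{u})$, so $a_\eps(u)$ is a Riemann sum converging to $\int_U f_p(x)f_q(x)\,\d x$ as $\eps\to0$; moreover $\abs{a_\eps(u)}\leq\|f_q\|_\infty\,\eps^d\sum_{v}\abs{f_p(\eps v)}\leq C$ uniformly in $u$ and $\eps$. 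Since $\sum_{u}\kappa_0(0,u)=\chi<\infty$ by Lemma~\ref{lemma:chi}, dominated convergence for series lets me pass the limit inside the sum, producing $\sum_{u}\kappa_0(0,u)\int_U f_pf_q\,\d x=\chi\int_U f_p(x)f_q(x)\,\d x$, which is the claim.
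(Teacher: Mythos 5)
Your proof is correct and follows essentially the same route as the paper: replace the finite-volume kernel $2\sum_{i,j}\big(\nabla_i^{(1)}\nabla_j^{(2)}G_{U_\eps}\big)^2$ by the translation-invariant $\kappa_0$ via Lemma~\ref{lemma:bound_2_points_durre} and the factorization $a^2-b^2=(a-b)(a+b)$, then identify a Riemann sum against $\sum_u\kappa_0(0,u)=\chi$. The only differences are organizational: the paper splits the sum at $|v-w|\leq 1/\sqrt{\eps}$ and swaps $f_q(\eps w)$ for $f_q(\eps v)$ by Lipschitz continuity, whereas you perform the replacement globally with the sharper error $\eps^d\min\{|v-w|^{-d},1\}$ (absorbing the logarithm) and conclude by dominated convergence in $u=w-v$; both are valid.
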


\begin{proof}
Without loss of generality we define the compact set $D\subset U$ as the intersection of the supports of $f_p$ and $f_q$. Then
\begin{equation}\label{eq:k_n2}
    \eps^d \,\kappa\left(\langle\Phi_\eps,f_p\rangle_S,\langle\Phi_\eps,f_q\rangle_S\right) = \eps^d \sum_{v,w \in D_\eps} f_p(\eps v) f_q(\eps w) \,\kappa\left(\Phi_\eps(v),\Phi_\eps(w)\right) \ .
\end{equation}
From Theorem \ref{thm:cumulants}, we know the exact expression of $\kappa\left(\Phi_\eps(v),\Phi_\eps(w)\right)$, given by
\begin{equation}\label{eq:2point}
    \kappa\left(\Phi_\eps(v),\Phi_\eps(w)\right) = 2\sum_{i,j\in[d]} \left(\nabla_i^{(1)}\nabla_j^{(2)}G_{U_\eps}(v,w)\right)^2 \ .
\end{equation}
Recall the constant $\kappa_0(v,w)$, defined in \eqref{eq:kappa_0}. We will approximate $\kappa\left(\Phi_\eps(v),\Phi_\eps(w)\right)$ by $\kappa_0(v,w)$ and then plug it in \eqref{eq:k_n2}. In other words, we will approximate $G_{U_\eps}(\cdot,\cdot)$ by $G_0(\cdot,\cdot)$. First we split Equation \eqref{eq:k_n2} into two parts:

\begin{multline}\label{eq:split_sum}
    \eps^d \,\kappa\left(\langle\Phi_\eps,f_p\rangle_S,\langle\Phi_\eps, f_q\rangle_S\right) = \eps^d \sum_{\substack{v,w \in D_\eps\\|v-w|\leq1/\sqrt{\eps}}} f_p(\eps v) f_q(\eps w) \,\kappa\left(\Phi_\eps(v),\Phi_\eps(w)\right) \\
    + \eps^d \sum_{\substack{v,w \in D_\eps\\|v-w|>1/\sqrt{\eps}}} f_p(\eps v) f_q(\eps w)\, \kappa\left(\Phi_\eps(v),\Phi_\eps(w)\right) \ .
\end{multline}
The second term above can be easily disregarded: remember that the cumulant for two random variables equals their covariance, so using Corollary \ref{cor:covariance_bound} we get
\[
    \eps^d \sum_{\substack{v,w \in D_\eps\\\abs{v-w}>1/\sqrt{\eps}}} f_p(\eps v) f_q(\eps w) \,\kappa\left(\Phi_\eps(v),\Phi_\eps(w)\right) \lesssim \eps^d \sum_{\substack{v,w \in D_\eps\\|v-w|>1/\sqrt{\eps}}} |v-w|^{-2d} \lesssim \sum_{\substack{z \in \Z^d\\|z|>1/\sqrt{\eps}}} |z|^{-2d} \ ,
\]
which goes to $0$ as $\eps\to0$. For the first sum in~\eqref{eq:split_sum}, let us compute the error we are committing when replacing $G_{U_\eps}$ by $G_0$. We notice that
\[
    \max_{i,j\in[d]} \sup_{v,w\in D_\eps} \sup_{\eps\in(0,\eps_D]}  \abs{\nabla_i^{(1)}\nabla_j^{(2)} G_{U_\eps}(v,w)} \leq  c_D
\]
justified by~\eqref{eq:two_durre} in Lemma \ref{lemma:bound_2_points_durre}, combined with
\[
    \max_{i,j\in[d]} \sup_{v,w\in \Z^d} \abs{\nabla_i^{(1)}\nabla_j^{(2)} G_{0}(v,w)}\le c
\]
for some $c>0$, which is a consequence of Lemma~\ref{lemma:lemma_29_durre}. Recalling that $|a^2-b^2|= |a-b||a+b|$ for any real numbers $a,b$, and setting
\[
    a\coloneqq \nabla_i^{(1)}\nabla_j^{(2)} G_{U_\eps}(v,w)
\]
and 
\[
    b\coloneqq\nabla_i^{(1)}\nabla_j^{(2)} G_0(v,w) \ ,
\]
together with \eqref{eq:one_durre} from Lemma \ref{lemma:bound_2_points_durre}, we obtain

\begin{equation*}\label{eq:approx_sum_G}
   \sum_{i,j\in[d]} \left(\nabla_i^{(1)}\nabla_j^{(2)}G_{U_\eps}(v,w)\right)^2= \sum_{i,j\in[d]} \left(\nabla_i^{(1)}\nabla_j^{(2)}G_0(v,w)\right)^2 + \mathcal{O}\big(\eps^d\big) \ .
\end{equation*}

We can use this approximation in the first summand in \eqref{eq:split_sum} and obtain
\begin{equation}\label{eq:approx}
    \eps^d \sum_{\substack{v,w \in D_\eps\\|v-w|\leq1/\sqrt{\eps}}} f_p(\eps v) f_q(\eps w)\, \kappa\left(\Phi_\eps(v),\Phi_\eps(w)\right) = \eps^d \sum_{\substack{v,w \in D_\eps\\|v-w|\leq1/\sqrt{\eps}}} f_p(\eps v) f_q(\eps w) \,\kappa_0(v,w) + \mathcal O\big(\eps^{d/2}\big) \ ,
\end{equation}
since $\abs{\left\{v,w \in D_\eps:|v-w| < 1/\sqrt{\eps}\right\}} = \mathcal O\big(\eps^{-\frac{3}{2}d}\big)$. Now, given that both $f_p$ and $f_q$ are in $\mathcal C_c^\infty(U)$, they are also Lipschitz continuous. Hence 
\[
    \eps^d \sum_{\substack{v,w \in D_\eps\\|v-w|\leq1/\sqrt{\eps}}}\abs{f_q(\eps v)-f_q(\eps w)} \abs{\kappa_0(v,w)} \lesssim \eps^d \sum_{\substack{v,w \in D_\eps\\1\leq |v-w|\leq1/\sqrt{\eps}}}\frac{\sqrt{\eps}}{|v-w|^{2d}} = o(1) \ ,
\]
so that we can replace, up to a negligible error, $f_q(\eps w)$ by $f_q(\eps v)$ in \eqref{eq:approx}, getting
\[
    \eps^d\, \kappa\left(\langle\Phi_\eps,f_p\rangle_S,\langle\Phi_\eps,f_q\rangle_S\right) = \eps^d \sum_{\substack{v,w \in D_\eps\\|v-w|\leq1/\sqrt{\eps}}} f_p(\eps v) f_q(\eps v)\, \kappa_0(v,w) +  o(1) \ .
\]
Finally the translation invariance of $\kappa_0$ implies
\[
    \lim_{\eps\to0} \,\eps^d\, \kappa\left(\langle\Phi_\eps,f_p\rangle_S,\langle\Phi_\eps,f_q\rangle_S\right) = \sum_{v\in\Z^d} \kappa_0(0,v) \int_U f_p(x) f_q(x) \d x
\]
as claimed.
\end{proof}

\section{Discussion and open questions}\label{sec:dis}

In this paper we studied properties of the gradient squared of the discrete Gaussian free field on $\mathbb{Z}^d$ such as $k$-point correlation functions, cumulants, conformal covariance in $d=2$ and the scaling limit on a domain $U\subset \R^d$. 

One of the most striking result we have obtained is the ``almost'' permanental structure of our field contrasting the block determinantal  structure of the height-one field of the Abelian sandpile studied in \citet{durre, durrethesis,kassel-wu}.
We plan to investigate implications of these structures further in the future. 


In fact, the idea of the proof for tightness in Proposition \ref{thm:tightness} is based on the application of a criterion by~\citet{mf} for local H\"older and Besov spaces. The proof requires a precise control of the summability of $k$-point functions, which is provided by Theorem~\ref{thm:cumulants} and explicit estimates for double derivatives of the Green's function in a domain. Observe that the proof is based only on the growth of sums of moments at different points. Thus this technique can be generalized to prove tightness of other fields just by having information on these bounds, which is usually easier to obtain than the whole expression on the joint moments.

Regarding the convergence of finite-dimensional distributions, Proposition \ref{prop:convWN}, note that this strategy can be generalized to prove convergence to white noise of other families of fields, given the relatively mild conditions that we used from the field in question. Among them, one only requires knowledge on bounds of sums of joint cumulants, the existence of an infinite volume measure, and the finiteness of the susceptibility constant. Note that similar scaling results were given for random fields on the lattice satisfying the FKG inequality in~\citet{Newman1980Jan}.

\paragraph*{Acknowledgments}We would like to thank Antal J\'arai for bringing this problem to our attention, sharing his ideas with the authors and for helpful discussions on the topic. We would like to thank the participants of the workshop ``Challenges in Probability and Statistical Mechanics'' at the Technion, Haifa, and G\"unter Last for interesting comments and feedback on this work.

\section*{Funding and data availability statement}
AC initiated this work at TU Delft funded by grant 613.009.102 of the Dutch Organisation for Scientific Research (NWO). RSH was supported by a STAR cluster visitor grant during a visit to TU Delft where part of this work was carried out. AR is supported by Klein-2 grant OCENW.KLEIN.083 and did part of the work at TU Delft.

We do not analyse or generate any datasets, because our work proceeds within a theoretical and mathematical approach.
\appendix
\section{Appendix: Feynman diagrams}
\renewcommand{\thetheorem}{A.\arabic{theorem}}
\setcounter{theorem}{0}
\renewcommand{\thedefinition}{A.\arabic{definition}}
\setcounter{definition}{0}
When calculating expectations of products of Gaussian variables, one often obtains expressions consisting of pairwise combinations of the variables in question. It is then useful to define a graphical representation for these objects, the so-called Feynman diagrams. For a complete exposition on the subject we refer the reader to \citet[Chapters 1, 3]{janson}.

\begin{definition}[{Feynman diagrams, \citet[Definition 1.35]{janson}}]\label{def:FD}
A \emph{Feynman diagram} $\gamma$ of order $n\geq0$ and rank $r=r(\gamma)\geq0$ is a graph consisting of a set of $n$ vertices and $r$ edges without common endpoints. These are $r$ disjoint pairs of vertices, each joined by an edge, and $n-2r$ unpaired vertices. A Feynman diagram is said to be \emph{complete} if $r=n/2$ and \emph{incomplete} if $r<n/2$. Let $FD_0$ denote the set of all complete Feynman diagrams.  A Feynman diagram labeled by $n$ random variables $\xi_1,\dots,\xi_n$ defined on the same probability space is a Feynman diagram of order $n$ with vertices $1,\dots,n$, where $\xi_i$ is thought as being attached to vertex $i$. The \emph{value} $v(\gamma)$ of such a Feynman diagram $\gamma$ with edges $(i_k,j_k)$, $k=1,\dots,r$ and unpaired vertices $\{i:i\in A\}$ is given by
\[
    v(\gamma) = \prod_{k=1}^r \E\left[\xi_{i_k}\xi_{j_k}\right] \prod_{i\in A} \xi_i \ .
\]
Observe that this value is in general a random variable, and it is deterministic whenever the diagram is complete. 
\end{definition}

This definition allows us to express the expectation of the product of $n$ Gaussian random variables in terms of Feynman diagrams as follows:

\begin{theorem}[{\citet[Theorem 1.36]{janson}}]\label{thm:janson_exp}
Let $\xi_1,\dots,\xi_n$ be centered jointly normal random variables. Then
\[
    \E\left[\xi_1\cdots\xi_n\right] = \sum_\gamma v(\gamma) \ ,
\]
where the sum takes place over all $\gamma \in FD_0$ labeled by $\xi_1,\dots,\xi_n$.
\end{theorem}

We can also decompose the Wick product of $n$ Gaussian variables in terms of Feynman diagrams, as stated in the following theorem:

\begin{theorem}[{\citet[Theorem 3.4]{janson}}]
Let $\xi_1,\dots,\xi_n$ be centered jointly normal random variables. Then
\[
    :\!\xi_1\cdots\xi_n\!: \,= \sum_\gamma (-1)^{r(\gamma)} v(\gamma) \ ,
\]
being $r(\gamma)$ the rank of $\gamma$, where the sum takes place over all Feynman diagrams $\gamma$ labeled by $\xi_1,\dots,\xi_n$.
\end{theorem}

An extension of Theorem \ref{thm:janson_exp} now reads:

\begin{theorem}[{\citet[Theorem 3.8]{janson}}]\label{thm:janson_38}
Let $\xi_1,\dots,\xi_{n+m}$ be centered jointly normal random variables, with $m,n\geq0$. Then
\[
    \E\left[:\!\xi_1\cdots\xi_n\!: \xi_{n+1}\cdots\xi_{n+m}\right] = \sum_\gamma v(\gamma) \ ,
\]
where the sum takes place over all complete Feynman diagrams $\gamma$ labeled by $\xi_1,\dots,\xi_{n+m}$ such that no edge joins any pair $\xi_i$ and $\xi_j$ with $i<j\leq n$.
\end{theorem}

A formula for an even more general case can be obtained as follows:

\begin{theorem}[{\citet[Theorem 3.12]{janson}}]\label{thm:janson_general}
Let $Y_i = \;:\!\xi_{i1}\cdots\xi_{i l_i}\!:$, where $\left\{\xi_{ij}\right\}_{\substack{1\leq i\leq k, \\ 1\leq j\leq l_i}}$ are centered jointly normal variables, with $k\geq0$ and $l_1,\dots,l_k\geq 0$. Then
\[
    \E\left[Y_1\cdots Y_k\right] = \sum_\gamma v(\gamma) \ ,
\]
where we sum over all complete Feynman diagrams $\gamma$ labeled by $\left\{\xi_{ij}\right\}_{ij}$ such that no edge joins two variables $\xi_{i_1j_1}$ and $\xi_{i_2j_2}$ with $i_1=i_2$.
\end{theorem}

\begin{remark}
We said this is a formula for an even more general case than Theorem \ref{thm:janson_38} because $:\!X\!:\, = X$ for any centered normal variable.
\end{remark}

This theorem will be used for the proof of Theorem \ref{thm:cumulants}. In that case, each $Y_i$ is the Wick product of two variables, namely $Y_i = \,:\!\xi_{i1}\xi_{i2}\!:$, for all $i=1,\dots,n$. In this specific case it will hold, in fact, that $\xi_{i1} = \xi_{i2}$ for all $i$, but we keep a different notation for each variable in order to keep track of every possible Feynman diagram that can be made up from the variables $Y_i$. The value of a complete Feynman diagram $\gamma$ in this setting will be given by the expression
\[
    v(\gamma) = \prod_{s=1}^k \E\left[\xi_{\alpha_s m_{\alpha_s}}\xi_{\beta_s m_{\beta_s}}\right] \ ,
\]
with $\alpha_s,\beta_s\in[k]$, $\alpha_s \neq \beta_s$ for all $s$, and $m_{\alpha_s}, m_{\beta_s} \in \{1,2\}$.

Let us discuss a concrete example for the case $k=3$. One example is $\gamma=(V,E)$ with two copies of nodes per vertex $V=\{x_i, \tilde{x}_i: i=1,2,3\}$ and the set of undirected edges $E=\{(x_1, {x}_2), (\tilde{x}_1, x_3), (\tilde{x}_2,\tilde{x}_3)\}$ which pictorially can be depicted in Figure \ref{fig:A} below.  We have in total 8 complete Feynman diagrams in this case which can be obtained by considering the different edges resulting from pairings of the nodes $\{x_i,\tilde{x}_i: i= 1,2,3\}$ ignoring all pairings of the sort $(x_i,\tilde{x}_i)$ for all $i=1,2,3$. 

\begin{figure}[ht!]
    \centering
    \includegraphics[scale=0.8]{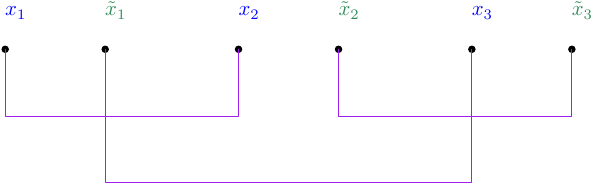}
    \caption{An example of a possible pairing of edges in a Feynman diagram.}
    \label{fig:A}
\end{figure}

\bibliographystyle{abbrvnat}
\bibliography{references}

\end{document}